\def\BState{\State\hskip-\ALG@thistlm}
\newtheorem{theorem}{Theorem}
\newtheorem{definition}{Definition}
\newtheorem{proposition}{Proposition}
\newtheorem{remark}{Remark}
\newtheorem{condition}{Condition}
\newtheorem{lemma}{Lemma}
\newcommand{\R}{\mathbb{R}}
\newcommand{\addvar}[1]{\, + \,}
\author{}
\newcommand{\Tr}{\operatorname{Tr}}
\newcommand{\IN}{\mathbb{N}}
\newcommand{\IR}{\mathbb{R}}
\newcommand{\E}{\mathbb{E}}
\newcommand{\argmin}[1]{\underset{#1}{\mathrm{argmin}}~}
\newcommand{\Esp}{\mathbb{E}}
\newcommand{\Var}{\mathrm{Var}}
\newcommand{\Int}[1]{\mathring{#1}}
\newcommand{\ITheta}{\Int{\Theta}}
\newcommand{\Cov}{\mathrm{Cov}}
\newcommand{\cW}{\mathcal{W}}
\definecolor{green}{rgb}{0, 0.5, 0}
\begin{document}

\title{Bounds in $L^1$ Wasserstein distance on the normal approximation of general M-estimators}

\author{Fran\c{c}ois Bachoc \\
	Institut de Mathématiques de Toulouse; \\ UMR5219. Université de Toulouse; \\
CNRS. UT3, F-31062 Toulouse, France \\
~ \\
	 Max Fathi \\
	 Laboratoire Jacques Louis Lions \& Laboratoire de Probabilit\'es Statistique et Mod\'elisation; \\ Université de Paris, France
 }

\maketitle

\abstract{We derive quantitative bounds on the rate of convergence in $L^1$ Wasserstein distance of general M-estimators, with an almost sharp (up to a logarithmic term) behavior in the number of observations. We focus on situations where the estimator does not have an explicit expression as a function of the data. The general method may be applied even in situations where the observations are not independent. Our main application is a rate of convergence for cross validation estimation of covariance parameters of Gaussian processes.} \\

\vspace{0.1cm}
{\bf Keywords:} Asymptotic normality, central limit theorem, Wasserstein distance, parametric estimation, logistic regression, cross validation.

\section{Introduction}

Our goal here is to derive quantitative bounds for approximate normality of parameter estimators that arise as minimizers of certain random functions. The main example to keep in mind is maximum likelihood estimation \cite[Chapter 5.5]{van2000asymptotic}, but other problems fit in the framework we shall consider, including least square estimators \cite{pronzato2013design} and cross validation \cite{Bachoc2013cross,Zhang2010kriging}. 

Consider a fixed compact parameter space $\Theta \subset \IR^p$ and a sequence of random functions $(M_n)_{n \in \IN}$, where for $n \in \IN$, $M_n: \Theta \to \IR$. 
Throughout, $\IN$ is the set of non-zero natural numbers.
The variable $n$ should be thought of as a sample size, and $M_n$ the function for which a  minimizer will be the M-estimator of interest, which is a (measurable) random vector $\hat{\theta}_n \in \Theta$ such that
\begin{equation} \label{eq:hat:theta:general}
	\hat{\theta}_n \in \argmin{\theta \in \Theta} M_n(\theta).
\end{equation}

A classical family of M-estimators is given by functions of the form
\begin{equation} \label{eq:M:n:generic:average}
M_n(\theta) = \frac{1}{n}\sum_{i=1}^n \rho(\theta, X_i)
\end{equation}
where the $X_i$ are the sample independent data, valued in a space $\mathcal{X}$, and $\rho : \Theta \times \mathcal{X} \to \IR$ is a fixed function. We shall address in details this class in Sections \ref{subsection:average:iid} and \ref{subsection:logistic}, but investigation shall go beyond this framework, in particular to cover covariance estimation for Gaussian processes, addressed in Section \ref{subsection:cross:validation}. 

Our goal will be to derive quantitative central limit theorems in $L^1$ Wasserstein (or optimal transport) distance for the fluctuations of $\hat{\theta}_n $ around a deterministic parameter $\theta_{0,n}$ (that is allowed to depend on $n$). The simplest example is when $\theta_{0,n} = \theta_0$ is fixed, typically when $M_n$ stems from the likelihood function and there is a fixed data generating process characterized by the ``true'' parameter $\theta_0$ \cite[Chapter 5.5]{van2000asymptotic}. Nevertheless, we allow for a sample-size dependent $\theta_{0,n}$ which enables to address relevant situations such as misspecified models \cite{bachoc2020uniformly,berk2013valid,huber1967under,white1982maximum}. In particular, in  \cite{bachoc2020uniformly,berk2013valid}, the parameter of interest $\theta_{0,n}$ that $\hat{\theta}_n $ estimates explicitly depends on sample size. 

In the context of this paper, it is typically already known that the distribution of $n^{1/2} (   \hat{\theta}_n  - \theta_{0,n} )$ converges to a Gaussian distribution. 
General techniques for showing this convergence are available in a wealth of contributions, see for instance \cite{casella2021statistical,potscher2013dynamic,van2000asymptotic} and references therein. 
Our goal is then to go beyond the convergence between these two distributions (for which, usually, no rates are available)
by providing quantitative bounds on their $L^1$ Wasserstein distance. 
In this view, the main challenge is the M-estimation setting, which often entails that no explicit expression of $\hat{\theta}_n$ is available.   
Our main abstract result, Theorem \ref{theorem:general:bound}, is a general statement about reducing the problem to a central limit theorem for an explicit function of the data. More precisely, the $L^1$ Wasserstein distance between the distribution of $n^{1/2} (  \hat{\theta}_n  - \theta_{0,n} )$ and a Gaussian distribution is bounded by the sum of a term of order $n^{-1/2}$ (up to a log factor) and the distance between a Gaussian distribution and the normalized gradient of $M_n$ at $\theta_{0,n}$. 

Hence, Theorem \ref{theorem:general:bound} enables to reduce the problem to quantifying the asymptotic normality of 
this normalized gradient. Since this quantity is explicit, there are many techniques in the literature that can be applied. We shall discuss this aspect of the problem in Section \ref{section:background:quantitative:CLT}. 

We shall illustrate the benefits of Theorem \ref{theorem:general:bound} with several examples of functions $M_n$: averages of independent functions in Section  \ref{subsection:average:iid}, maximum likelihood for logistic regression in Section \ref{subsection:logistic} and cross validation estimation of covariance parameters of Gaussian processes in Section \ref{subsection:cross:validation}. This last example highlights the flexibility of our techniques, since the observations are dependent and the function $M_n$ is not based on the likelihood. In all these three cases, eventually, we provide a bound, for the $L^1$ Wasserstein distance between the distribution of  $n^{1/2} (   \hat{\theta}_n  - \theta_{0,n} )$ and a Gaussian distribution, of order $n^{-1/2}$ (up to a log factor).

There has been a recent interest for bounding the normal approximation of M-estimators, as we do here.
On connected topics, the normal approximation is quantified in \cite{pinelis2016optimal} for the Delta method, in \cite{anastasiou2020bounds}  for likelihood ratios and in \cite{anastasiou2019normal} for gradient descent. 
Considering now specifically M-estimators, a series of articles successfully addressed them: 
\cite{anastasiou2017bounds,anastasiou2018assessing,anastasiou2020multivariate,anastasiou2020wasserstein,anastasiouLey2017bounds,anastasiouReinert2017bounds,bentkus1997berry,pinelis2017optimal,shao2021berry}. 
These articles address not only the univariate case (for $\theta$) \cite{anastasiou2017bounds,
	anastasiouLey2017bounds,anastasiouReinert2017bounds,bentkus1997berry,pinelis2017optimal}, but also the general multivariate one \cite{anastasiou2018assessing,anastasiou2020multivariate,anastasiou2020wasserstein,shao2021berry}. In particular, some of these references exploit the characterization of the $L^1$ Wasserstein distance as a supremum of expectation differences, over Lipschitz functions. This enables to decompose the target Wasserstein distance into several terms that can be addressed independently with different approaches. This idea appears for instance in \cite[(9), (10) and (20)]{anastasiou2017bounds}, as well as some of the other articles above. We also rely on it, see \eqref{eq:basic:bound:Wasserstein} and \eqref{eq:using:lipschitzness}.

We shall now highlight the novelty of our results compared to the above articles. 
First, the references \cite{anastasiou2018assessing,anastasiou2020multivariate,anastasiouLey2017bounds,anastasiouReinert2017bounds,bentkus1997berry,pinelis2017optimal,shao2021berry} do not address the $L^1$ Wasserstein distance as we do.
Only \cite{anastasiou2017bounds,anastasiou2020wasserstein} do.
 In \cite{shao2021berry},  the distance is the supremum probability difference over convex sets, which is of the Berry-Esseen type. Earlier and similarly, \cite{bentkus1997berry,pinelis2017optimal}  considered the Kolmogorov distance in the univariate case. Also, \cite{anastasiouLey2017bounds,anastasiouReinert2017bounds} address Zolotarev-type distances based on supremums of expectation differences over absolutely continuous bounded test functions (and Lipschitz in \cite{anastasiouReinert2017bounds}, yielding the bounded-Wasserstein distance). 
 Similarly, \cite{anastasiou2018assessing,anastasiou2020multivariate} consider test functions that are bounded with bounded derivatives of various orders. 
 Remark that while the $L^1$ Wasserstein and Kolmogorov distances can be compared under regularity conditions and a priori moment bounds, using general comparison results typically worsens the quantitative estimates. 
 Note also that bounding the $L^1$ Wasserstein distance is stronger than in \cite{anastasiou2018assessing,anastasiou2020multivariate,anastasiouReinert2017bounds}, as it allows for a larger class of test functions.
Remark furthermore that Berry-Esseen-type and Kolmogorov distances may be less sensitive than Wasserstein distances to, for instance, the moments of $ \hat{\theta}_n - \theta_{0,n}$. Thus, the Wasserstein distances necessitate specific treatments compared to them (for instance, see the proof and use of Lemma \ref{lemma:proba:estimation:error:and:zero:gradient} here, or the terms in Theorem 2.1 in   \cite{anastasiouReinert2017bounds} involving the moments of $ \hat{\theta}_n - \theta_{0,n}$). 
 
In addition, we allow for general functions $M_n$, while most of the above references focus on maximum likelihood. Some arguments provided for maximum likelihood do carry over to general functions $M_n$, but it is not clear that this is the case for all of them. Also, most of the above references focus on independent observations (often also identically distributed) defining the function $M_n$ (with the exception of \cite{anastasiou2017bounds}), while we allow for $M_n$ stemming from dependent observations. Again, some but not all arguments for independent observations can be extended to dependent observations.  In the case of independent observations, as in \cite{anastasiou2020wasserstein} we shall rely on a result of Bonis \cite{bonis2020stein} to bound the rate of convergence in the multivariate central limit theorem. 

Furthermore, in comparison to \cite{anastasiou2017bounds,anastasiou2018assessing,anastasiou2020multivariate,anastasiou2020wasserstein,anastasiouLey2017bounds,anastasiouReinert2017bounds}, our general bound in Theorem \ref{theorem:general:bound} only depends on $M_n$ and its derivatives, and does not feature $ \hat{\theta}_n - \theta_{0,n}$. In contrast, most of the general bounds in these references contain moments of $ \hat{\theta}_n - \theta_{0,n}$ (see for instance Theorem 2.1 in \cite{anastasiouReinert2017bounds}). Hence, our general bound seems more convenient to apply to examples, particularly when  $ \hat{\theta}_n$ does not have an explicit expression, which is often the case. In agreement with this, in most of the examples provided by \cite{anastasiou2017bounds,anastasiou2018assessing,anastasiou2020multivariate,anastasiou2020wasserstein,anastasiouLey2017bounds,anastasiouReinert2017bounds}, $ \hat{\theta}_n$ has an explicit expression. As an exception, \cite{anastasiou2018assessing,anastasiouReinert2017bounds} address maximum likelihood estimation of the shape parameters of the Beta distribution. 
Finally, \cite{anastasiou2017bounds,anastasiou2018assessing,anastasiou2020multivariate,anastasiou2020wasserstein,anastasiouLey2017bounds,anastasiouReinert2017bounds} usually make the assumption that there is a unique $ \hat{\theta}_n$ satisfying \eqref{eq:hat:theta:general}, while Theorem \ref{theorem:general:bound} here holds for any $ \hat{\theta}_n$ satisfying \eqref{eq:hat:theta:general}. In many statistical models of interest, there is no guarantee that $M_n$ has a unique minimizer over $\Theta$, almost surely. 

The examples we address are representative of the flexibility of Theorem \ref{theorem:general:bound}. In particular we address general averages of independent functions in Section \ref{subsection:average:iid}. We treat logistic regression in Section \ref{subsection:logistic}, with a simple proof once Theorem \ref{theorem:general:bound} is established, which illustrates that this theorem is efficient even when $ \hat{\theta}_n$ does not have an explicit expression, and is not necessarily unique. Finally, in Section \ref{subsection:cross:validation} we address cross validation estimation of covariance parameters of Gaussian processes. This last example highlights our flexibility to dependent observations and to $M_n$ not stemming from a likelihood and even not being an average of functions of individual observations (most of the discussed references above consider these averages of functions for $M_n$). Again, $ \hat{\theta}_n$ has no explicit expression in this cross validation example.

The rest of the paper is organized as follows. Section \ref{section:general:bounds} provides the general technical conditions and the general bound of Theorem \ref{theorem:general:bound}, reducing the problem to  the asymptotic normality of 
the normalized gradient. It also discusses many references to address this  asymptotic normality in the probabilistic literature. Section \ref{section:applications} addresses the three examples discussed above. Some of the proofs are postponed to the appendix.

\section{General bounds} \label{section:general:bounds}

For a $\ell \times \ell$ matrix $A$, we write $\rho_{\ell}(A) \leq \dots \leq \rho_1(A) $ for its singular values, and for a symmetric matrix, we write $\lambda_{\ell}(A) \leq \dots \leq \lambda_1(A) $ for its eigenvalues. 

\subsection{Technical conditions} \label{subsection:technical:conditions}

For $u,v \in \mathbb{R}^p$, we write $[u,v] = \{ t u + (1-t) v ; t \in [0,1] \}$ and $(u,v) = \{ t u + (1-t) v ; t \in (0,1) \}$. We write $\Int{\Theta}$ for the interior of the parameter space $\Theta$. The next condition means that $\Theta$ is, so to speak, well-behaved. It can be checked that this condition holds for most common compact parameter spaces, in particular hypercubes, balls, ellipsoids and polyhedral sets.  

\begin{condition} \label{cond:Theta:covering}
There exist two constants $0 < C_{\Theta} < \infty$  and $0 < c'_{\Theta} < \infty$ such that for each $0 < \epsilon \leq c'_{\Theta}$, there exist $N \leq C_{\theta} \epsilon^{-p}$ and $\theta_1 , \ldots , \theta_N \in \Theta$ satisfying the following.  For each $\theta \in \Theta$, there exists $i \in \{1 , \ldots , N\}$ such that $( \theta , \theta_i ) \subseteq \Int{\Theta} $ and $||\theta - \theta_i|| \leq \epsilon$. 
\end{condition}

Then, the next condition basically consists in asking for enough integrability on the derivatives of $M_n$ to be able to commute expectation and derivation, which is usually established using the dominated convergence theorem. Remark that the conditions on the first two derivative orders will actually be implied by some of our later conditions, but we state them here independently for convenience of writing. 

\begin{condition}  \label{cond:smoothness}
	Consider $n \in \IN$. For $\theta \in \Theta$,  the random variable $M_n(\theta)$ is absolutely summable.
	Almost surely, the function $M_n$ is three times differentiable on $\Int{\Theta}$.  For $i,j,k \in \{1 , \ldots ,p\}$ and $\theta \in \ITheta$, the random variables $\partial  M_n (\theta) / \partial \theta_i $, $\partial^2 M_n (\theta) / \partial \theta_i \partial \theta_j $ and $\partial^3 M_n (\theta) / \partial \theta_i \partial \theta_j \partial \theta_k $ are absolutely summable. Furthermore, 
	\[
 \Esp \left(	\frac{\partial M_n (\theta) }{\partial \theta_i}  \right) 
 =
 \frac{\partial \Esp ( M_n (\theta) )}{\partial \theta_i}, \hspace{5mm}
	\Esp \left(	\frac{\partial^2 M_n (\theta) }{\partial \theta_i \partial \theta_j}  \right) 
	=
	\frac{\partial^2 \Esp ( M_n (\theta) )}{\partial \theta_i \partial \theta_j}
	\]
	and
		\[
	\Esp \left(	\frac{\partial^3 M_n (\theta) }{\partial \theta_i \partial \theta_j  \partial \theta_k}  \right) 
	=
	\frac{\partial^3 \Esp ( M_n (\theta) )}{\partial \theta_i \partial \theta_j \partial \theta_k}.
	\]
\end{condition}

The next condition means that, for a fixed $\theta$, $M_n(\theta)$ and $ \partial M_n(\theta) / \partial \theta_i$, $i \in \{1 , \ldots , p\}$, concentrate around their expectations at rate $n^{-1/2}$, with an exponential decay for deviations of order larger than $n^{-1/2}$.  Many tools from concentration inequalities (for instance \cite{boucheron2013concentration,chatterjee2014superconcentration}) enable to check this condition in specific settings (see for instance those of Section \ref{section:applications}). The rate $n$ in the exponential is sharp in general for averages of i.i.d. random variables. 

\begin{condition} \label{cond:concentration}
	There are constants $0 < c_{M} < \infty$, $0 < c'_{M} < \infty$ and $0 < C_{M} < \infty$ such that for $n \in \IN$ and $0 < \epsilon \leq c'_{M}$,
	\[
	\sup_{\theta \in \Theta} 
	\mathbb{P}
	( |M_n(\theta) - \Esp(M_n(\theta))  | \geq \epsilon )
	\leq C_M
	\exp( - n c_M \epsilon^2 )
	\]
and
\[	\sup_{\theta \in \ITheta} 
	\mathbb{P}
	( || \nabla M_n(\theta) - \Esp( \nabla M_n(\theta))  || \geq \epsilon )
	\leq C_M
	\exp( - n c_M \epsilon^2 ).
	\]
\end{condition}

For a function $f: \ITheta \to \IR$ and for $\theta \in \Int{\Theta}$, we write $\nabla f (\theta)$ the gradient column vector of $f$ at $\theta$ and we write $\nabla^2 f (\theta)$ the Hessian matrix of $f$ at $\theta$. The next condition is a control on the deviations of the derivatives of $M_n$ of order 1 and 2, that is uniform over $\ITheta$. Remark that the deviations that are controlled are of larger order than those in Condition \ref{cond:concentration}. Hence, again, the condition can be checked in many settings.

\begin{condition} \label{cond:large:dev}
		There are constants $0 < c_{d,1} < \infty$, $0 < C_{d,1} < \infty$ and $0 < C'_{d,1} < \infty$ such that for $n \in \IN$ and $K \geq C'_{d,1}$,
		\[
		\mathbb{P} \left(
		 \sup_{\theta \in \ITheta} 
		\left|  \left|
		\nabla M_n(\theta)
		\right| \right|
		\geq K
		\right)
		\leq 
		C_{d,1} n \exp(- c_{d,1} K  )
		\]
		and
	\[
	\mathbb{P} \left(
	\sup_{\theta \in \ITheta} 
	\max_{i,j =1}^p
	\left|  
	\frac{\partial^2 M_n(\theta)}{\partial \theta_i \partial \theta_j}
	\right| 
	\geq K
	\right)
	\leq 
	C_{d,1} n \exp(- c_{d,1} K  ).
	\]
\end{condition}

We then require the derivatives of order 1, 2 and 3 of $M_n$ to have bounded moments of order 1, 1 and 2.

\begin{condition} \label{cond:moment}
	There is a constant $C_{d,2}$ such that for $n \in \IN$,
	\begin{equation} \label{eq:cond:moments:un:deux}
	\sup_{\theta \in \ITheta}
	\mathbb{E} \left(
	\left| \left|
    \nabla M_n(\theta)
	\right| \right|
	\right) 
	\leq 
	C_{d,2},
	\hspace{3mm}
	\sup_{\theta \in \ITheta}
	\max_{i,j=1}^p
	\mathbb{E} \left(
	\left| 
	\frac{ \partial^2 M_n(\theta) }{ \partial \theta_i \partial \theta_j }
	\right| 
	\right) 
	\leq 
	C_{d,2}
	\end{equation}
and
	\begin{equation} \label{eq:cond:moments:trois}
\max_{j,k,\ell=1}^p
\mathbb{E} \left(
	\sup_{\theta \in \ITheta}
\left|
\frac{\partial^3 M_n(\theta)}{ \partial \theta_j \partial \theta_k \partial \theta_\ell }
\right|^2
\right) 
\leq 
C_{d,2}.
\end{equation}
\end{condition}

Above, the moments are for fixed $\theta$ for the order 1 and 2. The moments for the order 3 are uniform over $\ITheta$. Note that it can be seen from the proof of Theorem \ref{theorem:general:bound} that assuming uniformity only locally around $\theta_{0,n}$ (see Condition \ref{cond:global:identifiability}) would be sufficient. For instance, \cite{anastasiou2020multivariate} has a similar locally uniform moment bound on the third-derivatives of the log-likelihood function (see (R.C.3) there).

The next condition requires the variances of the derivatives of order 1 and 2 of $M_n$ to be of order $1/n$. This condition is natural and easy to check in many settings, for example for i.i.d. random variables.

\begin{condition} \label{cond:var}
		There is a constant $C_{\Var}$ such that for $n \in \IN$, $j,k \in \{ 1 , \ldots , p \} $, 
			\[
				\sup_{ \theta \in \ITheta }
			\max_{j=1}^p
		\Var \left(   \frac{ \partial M_n (\theta) }{  \partial \theta_j }  \right)
		\leq 
		\frac{C_{\Var}}{n}
		\]
		and
	\[
	\sup_{ \theta \in \Int{\Theta} }
\max_{j,k=1}^p
\Var \left(
\frac{\partial^2 M_n(\theta)}{ \partial \theta_j \partial \theta_k}
\right) 
\leq 
\frac{C_{\Var}}{n}.
\] 

\end{condition}

For $x \in \IR^p$ and $r \geq 0$, we let $B(x,r)$ be the closed Euclidean ball in $\IR^p$ with center $x$ and radius $r$. The next condition introduces the sequence of deterministic parameters $(\theta_{0,n})_{n \in \IN}$, to which $(\hat{\theta}_n)_{n \in \IN}$ is asymptotically close. In the applications of Sections \ref{subsection:logistic} and \ref{subsection:cross:validation}, $\theta_{0,n} = \theta_0$ does not depend on the sample size and determines the fixed unknown data generating process. Nevertheless, it is beneficial to allow for a $n$-dependent $\theta_{0,n}$, to cover general cases of misspecified models, for instance as in \cite{bachoc2020uniformly,berk2013valid,huber1967under,white1982maximum}. 

\begin{condition} \label{cond:global:identifiability}
There exists a sequence $(\theta_{0,n})_{n \in \IN }$ and a constant $0 < c_{\theta_0} < \infty$ such that for each $n \in \IN$, $B(\theta_{0,n} , c_{\theta_0} ) \subseteq \ITheta $.
We each $n \in \IN$, $\Esp( \nabla M_n (\theta_{0,n}) ) = 0$.
For each $r >0$ such that $\Theta \backslash B( \theta_{0,n} , r ) \neq \varnothing$, there exist constants $N_r \in \IN$ and $0 < c_r < \infty$ such that for $n \geq N_r$,
\[
\inf_{ \substack{\theta \in \Theta \\ ||\theta - \theta_{0,n}|| \geq  r } }
\left(
\Esp (M_{n}(\theta))
-
\Esp(
M_n(\theta_{0,n})
)
\right) 
\geq 
c_r.
\]
\end{condition}

Condition \ref{cond:global:identifiability} is a usual one in M-estimation: $\theta_{0,n}$ cancels out the expected gradient of $M_n$ and is asymptotically the minimizer of $\Esp(M_n) $, so to speak. 

\begin{remark}
	In Condition \ref{cond:var}, it is actually sufficient that the second inequality holds only for $\theta = \theta_{0,n}$. We state Condition \ref{cond:var} as it is only for convenience of writing, and because checking the inequality uniformly over $\theta$ in the bounded $\ITheta$ usually brings no additional difficulty.  
\end{remark}

Then, define the covariance matrix of the normalized gradient 
\begin{equation} \label{eq:cov:score:general}
	\bar{C}_{n,0} = \Cov( \sqrt{n} \nabla M_n(\theta_{0,n}) )
\end{equation}
and the expected Hessian
\begin{equation} \label{eq:mean:hessian:general}
	\bar{H}_{n,0} = \mathbb{E}(\nabla^2 M_n( \theta_{0,n})).
\end{equation}
%For a  $\ell \times \ell $ symmetric matrix $A$, we let $\lambda_\ell(A) \leq \dots \leq \lambda_1(A) $ be its eigenvalues. 
The next condition requires the expected Hessian matrix of $M_n$ at $\theta_{0,n}$ to be asymptotically strictly positive definite. Similarly to Condition \ref{cond:global:identifiability}, this is a usual requirement for $\theta_{0,n}$ and $\hat{\theta}_n$ to be close at asymptotic rate $n^{-1/2}$. 

\begin{condition} \label{cond:local:identifiability}
There are constants $ 0 < c_{\theta_0,H} < \infty $ and $N_{\theta_0,H} \in \IN$ such that for $n \geq N_{\theta_0,H}$
\[
\lambda_p(  	\bar{H}_{n,0}  )  
\geq c_{\theta_0,H}.
\]
\end{condition}

We finally require the covariance matrix of the normalized gradient to be asymptotically strictly positive definite, so that the Gaussian limit in the central limit theorem is non-degenerate.  

\begin{condition} \label{cond:smallest:eigenvalue:covariance:nabla}
	There are constants $c_{\theta_0,\nabla} >0$ and $N_{\theta_0,\nabla} \in \IN$ such that for $n \geq N_{\theta_0,\nabla}$,
	\[
	 \lambda_p( \bar{C}_{n,0} ) \geq c_{\theta_0,\nabla}.
	\]
\end{condition}

\subsection{Reduction to the normal approximation of the normalized gradient}

We let $\mathcal{L}_1$ be the set of $1$-Lipschitz continuous functions from $\mathbb{R}^p$ to $\mathbb{R}$, that is the set of functions $g$ such that, for all $x_1 , x_2 \in \mathbb{R}^p$,
\[
|g(x_1) - g(x_2)| \leq || x_1 - x_2 ||.
\]

Then, for two random vectors $U$ and $V$ in $\mathbb{R}^p$, the $L^1$ Wasserstein distance between the distributions of $U$ and $V$ is
\[
\mathcal{W}_1(U,V) = 
\sup_{f \in \mathcal{L}_1}
| \mathbb{E}(f(U)) - \mathbb{E}(f(V)) |.
\] 
Equivalently, $\mathcal{W}_1(U,V)$ is also the well known $L^1$ optimal transport cost, according to the Kantorovitch-Rubinstein duality formula: 
\[
\mathcal{W}_1(U,V) = 
\inf_{ (\tilde{U},\tilde{V}) \sim \Pi(U,V) }
\mathbb{E}( || U - V|| ),
\]
where $\Pi(U,V)$ is the set of pairs of random vectors for which the first one is distributed as $U$ and the second one as $V$.

For a symmetric non-negative definite matrix $A$, we write $A^{1/2}$ for its unique symmetric non-negative definite square root. When $A$ is also invertible, we write $A^{-1/2} = (A^{1/2})^{-1} = (A^{-1})^{1/2}$. 

The next theorem is the main result of this paper. It can be checked, using standard arguments, that the conditions of Section \ref{subsection:technical:conditions} imply that $n^{1/2} (\hat{\theta}_n - \theta_{0,n})$ is asymptotically normally distributed, with asymptotic covariance matrix taking the ``sandwich'' form $ \bar{H}_{n,0}^{-1} \bar{C}_{n,0} \bar{H}_{n,0}^{-1}$. Equivalently, 	
$\bar{C}_{n,0}^{-1/2} \bar{H}_{n,0}
n^{1/2}  (\hat{\theta}_n - \theta_{0,n})$ converges to a standard Gaussian distribution. We are interested in the Wasserstein distance between the distribution of this latter random vector and the standard Gaussian one. We show that this distance is bounded by the sum of a term of order $n^{-1/2}$ (up to a log factor) and the distance between  $\bar{C}_{n,0}^{-1/2}
n^{1/2} \nabla M_n (\theta_{0,n})$ and the standard Gaussian distribution. The benefit on Theorem \ref{theorem:general:bound} is then that $\bar{C}_{n,0}^{-1/2}
n^{1/2} \nabla M_n (\theta_{0,n})$ is usually much easier to analyze than $\bar{C}_{n,0}^{-1/2} \bar{H}_{n,0}
n^{1/2}  (\hat{\theta}_n - \theta_{0,n})$, since it takes an explicit form and is not defined as a minimizer. In Section \ref{section:background:quantitative:CLT}, we discuss many existing possibilities to quantify the asymptotic normality of $\bar{C}_{n,0}^{-1/2}
n^{1/2} \nabla M_n (\theta_{0,n})$. 

\begin{theorem} \label{theorem:general:bound}
Assume that Conditions \ref{cond:Theta:covering} to \ref{cond:smallest:eigenvalue:covariance:nabla} hold. Consider $\hat{\theta}_n$ as in \eqref{eq:hat:theta:general}.
There are constants $0 < C_{\cW,1} < \infty$, $0 < C_{\cW,2} < \infty$ and $N_{\cW} \in \IN$ such that for $n \geq N_{\cW}$, with $Z$ following the standard Gaussian distribution on $\IR^p$,
\begin{align*}
	 \mathcal{W}_1(
	\bar{C}_{n,0}^{-1/2} \bar{H}_{n,0}
	\sqrt{n}  (\hat{\theta}_n - \theta_{0,n}) 
	, 
	Z  
	)
	\leq 
	\mathcal{W}_1
	\left( 
	 \bar{C}_{n,0}^{-1/2}
	\sqrt{n} \nabla M_n (\theta_{0,n}) , Z 
	\right)
	+
	C_{\cW,1} 
 \frac{(\log n)^{C_{\cW,2}}}{ \sqrt{n} } . 
\end{align*}
\end{theorem}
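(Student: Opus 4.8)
The plan is to exploit the Kantorovich–Rubinstein duality and work with a generic $1$-Lipschitz test function $f$, bounding $|\Esp f(\bar{C}_{n,0}^{-1/2}\bar{H}_{n,0}\sqrt{n}(\hat\theta_n-\theta_{0,n})) - \Esp f(\bar{C}_{n,0}^{-1/2}\sqrt{n}\nabla M_n(\theta_{0,n}))|$ by $C_{\cW,1}(\log n)^{C_{\cW,2}}/\sqrt n$, after which the triangle inequality for $\cW_1$ gives the claim. The heart of the matter is the first-order expansion: on the event $\mathcal{A}_n$ that $\hat\theta_n\in\Int\Theta$ (so that $\nabla M_n(\hat\theta_n)=0$), a Taylor expansion of $\nabla M_n$ around $\theta_{0,n}$ along the segment $[\theta_{0,n},\hat\theta_n]$ gives
\[
0 = \nabla M_n(\hat\theta_n) = \nabla M_n(\theta_{0,n}) + \nabla^2 M_n(\theta_{0,n})(\hat\theta_n-\theta_{0,n}) + \tfrac12 R_n,
\]
where $R_n$ is a third-derivative remainder, controlled in $L^2$ by \eqref{eq:cond:moments:trois} and in probability (uniformly) via Condition \ref{cond:large:dev}. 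Rearranging, $\sqrt n(\hat\theta_n-\theta_{0,n}) = -(\nabla^2 M_n(\theta_{0,n}))^{-1}(\sqrt n\nabla M_n(\theta_{0,n}) + \tfrac12\sqrt n R_n)$. I would then replace the random Hessian $\nabla^2 M_n(\theta_{0,n})$ by its expectation $\bar H_{n,0}$, paying a price controlled by Condition \ref{cond:var} (variance $O(1/n)$, hence $O(n^{-1/2})$ deviation) together with the lower eigenvalue bound of Condition \ref{cond:local:identifiability} (to invert safely, on a high-probability event) and the moment bound of Condition \ref{cond:moment}. After multiplying by $\bar C_{n,0}^{-1/2}\bar H_{n,0}$ and using Condition \ref{cond:smallest:eigenvalue:covariance:nabla} to keep the normalizing matrices bounded, one sees that $\bar C_{n,0}^{-1/2}\bar H_{n,0}\sqrt n(\hat\theta_n-\theta_{0,n})$ equals $-\bar C_{n,0}^{-1/2}\sqrt n\nabla M_n(\theta_{0,n})$ up to an error whose $L^1$ norm on the good event is $O((\log n)^{C}/\sqrt n)$; using $1$-Lipschitzness of $f$, $|\Esp f(X)-\Esp f(Y)|\le \Esp\|X-Y\|$, this translates directly into the Wasserstein bound.

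The remaining ingredient is to discard the bad events — namely $\hat\theta_n\notin B(\theta_{0,n},c_{\theta_0})\subseteq\Int\Theta$, or the Hessian being too small, or the first/third derivatives being too large. This is exactly the content of Lemma \ref{lemma:proba:estimation:error:and:zero:gradient} alluded to in the introduction: one shows $\mathbb P(\|\hat\theta_n-\theta_{0,n}\|\ge r)$ is small, combining the global identifiability Condition \ref{cond:global:identifiability} with the uniform concentration of $M_n$ over a covering net of $\Theta$ (Condition \ref{cond:Theta:covering} supplies the net of size $O(\epsilon^{-p})$, Condition \ref{cond:concentration} the per-point concentration, and a union bound plus the uniform gradient deviation of Condition \ref{cond:large:dev} or \ref{cond:moment} to pass from net to supremum). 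Choosing the radius $r$ to decay like a suitable power of $(\log n)/\sqrt n$ makes the exponential tail $C_M\exp(-nc_M\epsilon^2)$ beat $n^{-1/2}$, so that the contribution of $f$ on the bad event — bounded crudely using that $f$ is Lipschitz hence of at most linear growth, together with the moment bounds on $\sqrt n\nabla M_n$ and on $\sqrt n(\hat\theta_n-\theta_{0,n})$ (the latter following from the same expansion on the good event plus Cauchy–Schwarz on the bad event) — is absorbed into the $O((\log n)^{C_{\cW,2}}/\sqrt n)$ term. Here Wasserstein, unlike Kolmogorov, genuinely requires these moment controls, which is why Condition \ref{cond:moment} and the careful truncation are needed rather than a bare probability estimate.

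I expect the main obstacle to be the bookkeeping around the bad events: one must simultaneously (i) get a polylogarithmic-in-$n$, $n^{-1/2}$-scale bound on $\|\hat\theta_n-\theta_{0,n}\|$ that holds off an event of probability $o(n^{-1/2})$ — in fact smaller than any power of $n$, given the exponential concentration — and (ii) ensure that on the bad event the integrand $\|X-Y\|$ (which can be large, of order $\sqrt n$) multiplied by that tiny probability is still $O(n^{-1/2})$, which forces a Cauchy–Schwarz split and hence the $L^2$ third-derivative bound \eqref{eq:cond:moments:trois} rather than an $L^1$ one. The interplay between the covering number $\epsilon^{-p}$, the exponential rate $n\epsilon^2$, and the desired $(\log n)^{C_{\cW,2}}$ factor is where the logarithmic loss enters: one takes $\epsilon \asymp \sqrt{(\log n)/n}$ so that $n\epsilon^2 \asymp \log n$ dominates $\log(\epsilon^{-p}) = O(\log n)$ with room to spare. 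Everything else — the Taylor expansion, the Hessian-to-expectation swap, and the final Lipschitz estimate — is routine once these high-probability events are in hand.
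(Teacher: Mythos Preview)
Your approach is essentially the paper's: triangle inequality, the Lipschitz bound $\cW_1(X,Y)\le\Esp\|X-Y\|$, a Taylor expansion of $\nabla M_n(\hat\theta_n)=0$, and a good/bad event split with the covering-net concentration driving the tail estimate. Two points are worth flagging.

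First, the paper does not invert the random Hessian. It applies the mean value theorem in the first-order form $0=\nabla M_n(\theta_{0,n})+\nabla^2 M_n(\tilde\theta_1,\ldots,\tilde\theta_p)(\hat\theta_n-\theta_{0,n})$ and rewrites the target directly as
\[
\bar H_{n,0}\sqrt n(\hat\theta_n-\theta_{0,n})+\sqrt n\,\nabla M_n(\theta_{0,n}) = \sqrt n\bigl(\bar H_{n,0}-\nabla^2 M_n(\tilde\theta_1,\ldots,\tilde\theta_p)\bigr)(\hat\theta_n-\theta_{0,n}),
\]
which is exactly the quantity whose $L^1$ norm must be bounded (after multiplying by $\bar C_{n,0}^{-1/2}$). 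Splitting $\bar H_{n,0}-\nabla^2 M_n(\tilde\theta)$ into $(\bar H_{n,0}-\nabla^2 M_n(\theta_{0,n}))+(\nabla^2 M_n(\theta_{0,n})-\nabla^2 M_n(\tilde\theta))$ yields the same variance and third-derivative terms as in your route, but without ever needing $\nabla^2 M_n(\theta_{0,n})$ to be invertible on a high-probability event. Your detour works but carries extra bookkeeping. (Also, mind the sign: the expansion puts $\hat\theta_n-\theta_{0,n}$ close to $-\bar H_{n,0}^{-1}\nabla M_n(\theta_{0,n})$, so the intermediate comparison should be with $-\bar C_{n,0}^{-1/2}\sqrt n\,\nabla M_n(\theta_{0,n})$; the symmetry of $Z$ then removes the minus, as in the paper's \eqref{eq:w2:equals}.)

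Second, and more substantively, your description of the shrinking-radius tail bound $\mathbb{P}(\|\hat\theta_n-\theta_{0,n}\|\ge t)$ with $t\asymp(\log n)^\gamma/\sqrt n$ --- ``global identifiability plus uniform concentration of $M_n$'' --- is the argument behind Lemma~\ref{lemma:proba:estimation:error}, which only handles a \emph{fixed} radius. At shrinking radius that route fails: locally $\Esp M_n(\theta)-\Esp M_n(\theta_{0,n})\gtrsim t^2$, so one would need $\sup_\theta|M_n-\Esp M_n|\ge c t^2\asymp (\log n)^{2\gamma}/n$, and the exponent $n\cdot (t^2)^2\to 0$ in Condition~\ref{cond:concentration} gives nothing. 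The paper instead argues through the gradient (Lemmas~\ref{lemma:nabla:Mn:larger:than} and~\ref{lemma:proba:estimation:error:and:zero:gradient}): on the good event $\nabla M_n(\hat\theta_n)=0$, so $\|\Esp\nabla M_n(\hat\theta_n)\|\ge c\,\|\hat\theta_n-\theta_{0,n}\|\ge ct$ by local identifiability, forcing $\sup_\theta\|\nabla M_n(\theta)-\Esp\nabla M_n(\theta)\|\ge ct$; now the exponent is $n\cdot t^2\asymp(\log n)^{2\gamma}$, which is exactly where the logarithmic loss legitimately enters. You cite the right lemma, but be sure to prove it via $\nabla M_n$, not $M_n$.
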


\begin{remark}
	In Theorem \ref{theorem:general:bound}, the bound on $\mathcal{W}_1(
	\bar{C}_{n,0}^{-1/2} \bar{H}_{n,0}
	n^{1/2}  (\hat{\theta}_n - \theta_{0,n}) 
	, 
	Z  
	)$
	directly provides a similar bound on 
	$\mathcal{W}_1(
	n^{1/2}  (\hat{\theta}_n - \theta_{0,n}) 
	, 
	Z _n 
	)$,
	where $Z_n$ follows the centered Gaussian distribution with covariance matrix $	\bar{H}_{n,0}^{-1} \bar{C}_{n,0} \bar{H}_{n,0}^{-1} $. Indeed the matrix $\bar{H}_{n,0}^{-1} \bar{C}_{n,0}^{1/2} $ is bounded and we can apply the well-known Lemma \ref{lemma:Lipschitz:Wasserstein} below. The same remark applies to Theorems \ref{thm_classical_m_bnd}, \ref{theorem:logistic:bound} and \ref{theorem:cross:validation:bound}, since the matrix $\bar{H}_{n,0}^{-1} \bar{C}_{n,0}^{1/2} $ is also bounded in these latter contexts (as is shown in the proofs). 
\end{remark}

\begin{lemma} \label{lemma:Lipschitz:Wasserstein}
	Let $U,V$ be two random vectors of $\IR^p$ and $h: \IR^p \to \IR^p$ be such that for $u,v \in \IR^p$, $||h(u) - h(v)|| \leq C ||u-v||$ with $0 < C < \infty$. Then $\cW_1( h(U) , h(V) ) \leq C \cW_1 (U,V)$.  
\end{lemma}

\subsection{Background on approximate normality for functions of many random variables} \label{section:background:quantitative:CLT}

Theorem \ref{theorem:general:bound} reduces the problem of proving a quantitative bound on distance to the Gaussian for a general M-estimator to proving the same statement for an explicit function of the data. We shall now describe some of the broad ideas for proving such statements, some of which will be used in the applications described in Section \ref{section:applications}. We do not aim at being exhaustive, and other techniques can also be used in this context. 

The abstract setting is to consider a random variable of the form $f(X_1,...,X_n)$ where the $X_i$ are random variables. The classical central limit theorem consists in taking the $X_i$ to be i.i.d. and $f$ to be a normalized sum. 

When $f$ is a sum, which arises for M-estimators of the form \eqref{eq:M:n:generic:average} (see Sections \ref{subsection:average:iid} and \ref{subsection:logistic}), there is a vast literature on quantitative central limit theorems, beyond the classical i.i.d. assumptions. For independent variables, we shall use here a very general result of Bonis \cite{bonis2020stein}, but many other results can be used in such a situation.

If $f$ is not a sum, but is approximately affine, and all variables have some influence on the value, we still expect approximate normality. This heuristic has been made rigorous by second-order Poincar\'e inequalities, which bound distances to the Gaussian when certain functions of the first and second derivatives are small. They have been introduced in the Gaussian setting by Chatterjee \cite{chatterjee2009poincare}, extended in \cite{nourdin2009poincare}, and analogues for general independent random variables via discrete second-order derivatives were studied in \cite{chatterjee2009stein,decreusefond2019dirichlet, duerinckx2021glauber}. Second-order Poincar\'e inequalities for non-Gaussian, non-independent random variables do not seem to have been yet addressed in the literature, and warrant further investigation. 

Another method for proving approximate normality in the Gaussian setting when the function $f$ is a multivariate polynomial is via the quantitative fourth moment theorem of Nourdin and Peccati \cite{nourdin2009stein}, which for example applies to U-statistics. When the polynomial is square-free and has low influences, it is possible to extend this phenomenon to more general i.i.d. random variables \cite{nourdin2010invariance}. The approach extends to non-independent functions of Gaussian variables, a result known as the quantitative Breuer-Major theorem \cite{nourdin2020breuer,nourdin2011breuer}. We refer to the monograph \cite{nourdin2012livre} for a thorough discussion of this approach. We shall use a variant of it in Section \ref{subsection:cross:validation}. 

For non-independent random variables, there have been successful implementations of variants of Stein's method, often in situations where there is some symmetry. Classical techniques include the exchangeable pairs method and the zero-bias transform, and we refer to \cite{ross11} for a survey.

\section{Applications} \label{section:applications}
\subsection{Minimization of averages of independent functions} \label{subsection:average:iid}

We now show how Theorem \ref{theorem:general:bound} applies to estimators provided by
$$ \label{eq:Mn:generic:average:in:Section}
M_n(\theta) = \frac{1}{n}\sum_{i=1}^n \rho(\theta, X_i),
$$
as in \eqref{eq:M:n:generic:average} with independent random vectors $X_1,\ldots,X_n$.  

We introduce the property of sub-Gaussianity, that holds for a large class of random variables, including Gaussian random variables, bounded random variables and uniformly log-concave random variables. 

\begin{definition}
A real-valued random variable $X$ is said to be sub-Gaussian with constant $\sigma^2$ if for any $t \geq 0$ we have
$$\E \left( \exp(t(X-\E[X])) \right) \leq \exp(t^2\sigma^2/2).$$
\end{definition}

The next theorem, based on Theorem \ref{theorem:general:bound}, provides a bound of order $n^{-1/2}$ (up to a log factor) in Wasserstein distance for the asymptotic normality of M-estimators based on \eqref{eq:M:n:generic:average}, under uniform sub-Gaussiannity for $\rho$ and its derivatives with respect to $\theta$. 

\begin{theorem} \label{thm_classical_m_bnd}
Assume that $X_1 , \ldots , X_n$ are independent.
Assume moreover that there  are constants $0 < \sigma^2 < \infty$ and $0 < E_{\sup} < \infty$ such that for any $i \in \{1, \ldots , n\}$, for any $j,k,\ell \in \{1 , \ldots ,p\}$,  for any $\theta_1 \in \Theta$, for any $\theta_2 \in \ITheta$, 
\begin{align} \label{eq:Y:subgaussian}
	& \text{for any} ~  Y \in
	\left\{ 
	\rho(\theta_1, X_i), 
	\partial \rho(\theta_2, X_i) / \partial \theta_j,
	\partial^2 \rho(\theta_2, X_i) / \partial \theta_j \partial \theta_k,
	\partial^3 \rho(\theta_2, X_i) / \partial \theta_j \partial \theta_k \partial \theta_\ell
	\right\}, \notag \\
	&
\text{$Y$ is sub-Gaussian with constant $\sigma^2$ and has absolute expectation bounded by $E_{\sup}$}. 	
\end{align}
Assume moreover that Conditions \ref{cond:Theta:covering}, \ref{cond:smoothness} and \ref{cond:global:identifiability} to \ref{cond:smallest:eigenvalue:covariance:nabla} hold. 
Consider $M_n$, $\hat{\theta}_n$, $\bar{C}_{n,0}$ and $\bar{H}_{n,0}$ as in \eqref{eq:M:n:generic:average}, \eqref{eq:hat:theta:general}, \eqref{eq:cov:score:general} and \eqref{eq:mean:hessian:general}.
Finally, assume that one of the two following conditions hold: either
\begin{enumerate}
\item There exist fixed constants $\lambda > 0$ and $C < \infty$ such that $$
\mathbb{E}
\left(
\exp \left( \lambda \sup_{ \theta \in \ITheta } 
||\nabla \rho(\theta, X_k)||
\right) 
\right)
 \leq C; \hspace{3mm} \mathbb{E}
 \left(
 \exp\left(\lambda \sup_{ \theta \in \ITheta } \left|\frac{\partial^2 \rho}{\partial \theta_i \partial \theta_j}(\theta, X_k)\right|\right)
 \right)
  \leq C$$
and
$$\mathbb{E}
\left(
\exp\left(\lambda \sup_{ \theta \in \ITheta } \left|\frac{\partial^3 \rho}{\partial \theta_i \partial \theta_j \partial \theta_\ell}(\theta, X_k)\right|\right)
\right)
 \leq C$$
for all $k \in \{1 , \ldots , n\}$ and $i, j , \ell \in \{1 , \ldots , p\}$.

Or

\item All the functions $||\nabla \rho(\cdot, x)||$, $\partial^2\rho(\cdot, x)/\partial \theta_i \partial \theta_j$ and $\partial^3\rho(\cdot, x)/\partial \theta_i \partial \theta_j\partial \theta_\ell$ have a modulus of continuity bounded by some function $\omega$, uniformly in $x \in \mathcal{X}$ and in $i,j, \ell \in \{1 , \ldots ,p \}$. 
\end{enumerate}
Then there are constants $0 < C_{\rho,1} < \infty$, $0 < C_{\rho,2} < \infty$ and $N_{\rho} \in \IN$ such that, for $n \geq N_{\rho}$, with $Z$ following the standard Gaussian distribution,
$$\mathcal{W}_1(
	\bar{C}_{n,0}^{-1/2} \bar{H}_{n,0}
	\sqrt{n}  (\hat{\theta}_n - \theta_{0,n}) 
	, 
	Z  
	)
	\leq \frac{C_{\rho,1} (\log n)^{C_{\rho,2}}}{\sqrt{n}}.$$
\end{theorem}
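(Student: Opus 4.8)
The plan is to deduce Theorem~\ref{thm_classical_m_bnd} from the abstract Theorem~\ref{theorem:general:bound}. Conditions~\ref{cond:Theta:covering}, \ref{cond:smoothness} and \ref{cond:global:identifiability}--\ref{cond:smallest:eigenvalue:covariance:nabla} are assumed, so the only remaining hypotheses of Theorem~\ref{theorem:general:bound} to be checked are the probabilistic ones, Conditions~\ref{cond:concentration}, \ref{cond:large:dev}, \ref{cond:moment} and \ref{cond:var}; I would verify these from the sub-Gaussianity assumption \eqref{eq:Y:subgaussian} together with the extra assumption~1 or~2. Once Theorem~\ref{theorem:general:bound} applies, it remains to bound $\mathcal{W}_1(\bar{C}_{n,0}^{-1/2}\sqrt n\,\nabla M_n(\theta_{0,n}),Z)$, which, because $M_n$ is an average, reduces to the rate in the multivariate central limit theorem for a normalized sum of independent vectors; this I would settle using the result of Bonis~\cite{bonis2020stein}.

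First I would establish the technical conditions. The random variable $M_n(\theta)$ and each coordinate $\partial M_n(\theta)/\partial\theta_j$ is a $\tfrac1n$-average of $n$ independent sub-Gaussian variables of parameter $\sigma^2$, hence sub-Gaussian of parameter $\sigma^2/n$; a Chernoff bound and a union bound over the $p$ coordinates give Condition~\ref{cond:concentration}. For Condition~\ref{cond:var}, independence yields $\Var(\partial M_n(\theta)/\partial\theta_j)=\tfrac1{n^2}\sum_i\Var(\partial\rho(\theta,X_i)/\partial\theta_j)\le\sigma^2/n$, and similarly at order~$2$. For Condition~\ref{cond:moment}, boundedness of the first moments of $\|\nabla M_n(\theta)\|$ and of the second derivatives follows from the bound $E_{\sup}$ on expectations plus the $O(n^{-1/2})$ standard deviation just obtained; for the third derivatives one has $\sup_{\theta\in\ITheta}|\partial^3 M_n(\theta)/\partial\theta_j\partial\theta_k\partial\theta_\ell|\le\tfrac1n\sum_i W_i$ with $W_i:=\sup_{\theta\in\ITheta}|\partial^3\rho(\theta,X_i)/\partial\theta_j\partial\theta_k\partial\theta_\ell|$, and $\E\big((\tfrac1n\sum_iW_i)^2\big)\le\tfrac1n\max_i\E(W_i^2)+(\max_i\E W_i)^2$, which is bounded under assumption~1 (the $W_i$ have a uniformly bounded exponential moment) and under assumption~2 (the uniform modulus of continuity $\omega$ lets one bound $W_i$ by its value on a fixed finite $\epsilon$-net of $\Theta$ plus $\omega(\epsilon)$, the net values being sub-Gaussian). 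The crux is Condition~\ref{cond:large:dev}: under assumption~1 I would use $\sup_{\theta\in\ITheta}\|\nabla M_n(\theta)\|\le\tfrac1n\sum_i\sup_{\theta\in\ITheta}\|\nabla\rho(\theta,X_i)\|\le\max_i\sup_{\theta\in\ITheta}\|\nabla\rho(\theta,X_i)\|$, so that a union bound plus Markov's inequality applied to the exponential moment gives $\mathbb{P}(\sup_\theta\|\nabla M_n(\theta)\|\ge K)\le n\,Ce^{-\lambda K}$, of the required form (and likewise for the Hessian entries); under assumption~2 I would instead combine a $\tfrac1n$-scale sub-Gaussian control of $\nabla M_n$ and of the second derivatives at the points of an $\epsilon$-net of $\Theta$ (of cardinality $O(\epsilon^{-p})$ by Condition~\ref{cond:Theta:covering}) with the uniform modulus of continuity to pass from the net to all of $\ITheta$, and then pick $\epsilon$ a suitable negative power of $K$; for $K$ of constant order or larger the resulting bound $O(\epsilon^{-p}\exp(-ncK^2))$ is dominated by $C_{d,1}n\exp(-c_{d,1}K)$.

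With Theorem~\ref{theorem:general:bound} now applicable, it suffices to prove $\mathcal{W}_1(\bar{C}_{n,0}^{-1/2}\sqrt n\,\nabla M_n(\theta_{0,n}),Z)\le Cn^{-1/2}$. Set $\eta_{n,i}:=\nabla_\theta\rho(\theta_{0,n},X_i)-\E(\nabla_\theta\rho(\theta_{0,n},X_i))$; since $\E(\nabla M_n(\theta_{0,n}))=0$ by Condition~\ref{cond:global:identifiability}, one has $\sqrt n\,\nabla M_n(\theta_{0,n})=\tfrac1{\sqrt n}\sum_{i=1}^n\eta_{n,i}$, a sum of independent centered random vectors with $\tfrac1n\sum_i\Cov(\eta_{n,i})=\bar{C}_{n,0}$. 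Hence $U_{n,i}:=n^{-1/2}\bar{C}_{n,0}^{-1/2}\eta_{n,i}$ are independent, centered, with $\sum_i\Cov(U_{n,i})=I_p$, and $\sum_i\E\|U_{n,i}\|^3=n^{-1/2}\cdot\tfrac1n\sum_i\E\|\bar{C}_{n,0}^{-1/2}\eta_{n,i}\|^3$. By Condition~\ref{cond:smallest:eigenvalue:covariance:nabla}, $\|\bar{C}_{n,0}^{-1/2}\|=\lambda_p(\bar{C}_{n,0})^{-1/2}$ is bounded for $n$ large, while $\E\|\eta_{n,i}\|^3$ is bounded because each coordinate of $\nabla_\theta\rho(\theta_{0,n},X_i)$ is sub-Gaussian of parameter $\sigma^2$; thus $\sum_i\E\|U_{n,i}\|^3\le Cn^{-1/2}$. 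Applying Bonis's multivariate central limit bound~\cite{bonis2020stein} to $\sum_i U_{n,i}$ gives $\mathcal{W}_1(\bar{C}_{n,0}^{-1/2}\sqrt n\,\nabla M_n(\theta_{0,n}),Z)\le Cn^{-1/2}$. Inserting this into Theorem~\ref{theorem:general:bound} and absorbing the $n^{-1/2}$ term into the $C_{\cW,1}(\log n)^{C_{\cW,2}}n^{-1/2}$ term yields the stated bound. I expect the verification of Condition~\ref{cond:large:dev} (and, under assumption~2, the companion net/modulus-of-continuity argument that is also needed for the third-derivative moment bound in Condition~\ref{cond:moment}) to be the main technical obstacle; the rest is a routine assembly of sub-Gaussian tail and moment estimates, together with the observation that $\bar{H}_{n,0}^{-1}\bar{C}_{n,0}^{1/2}$ is bounded (via Conditions~\ref{cond:moment} and~\ref{cond:local:identifiability}), which justifies the accompanying remark.
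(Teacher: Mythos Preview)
Your proposal is correct and follows essentially the same route as the paper: verify Conditions~\ref{cond:concentration}--\ref{cond:var} from the sub-Gaussianity assumption \eqref{eq:Y:subgaussian} together with assumption~1 or~2, invoke Theorem~\ref{theorem:general:bound}, and then close with Bonis's quantitative multivariate CLT. The only noteworthy difference is in the handling of Condition~\ref{cond:large:dev} under assumption~2: the paper first bounds the exponential moment of $\sup_{\theta}\|\nabla\rho(\theta,X_k)\|$ for a \emph{single} observation via the $\epsilon$-net and modulus of continuity (thereby reducing assumption~2 to assumption~1 and then using Markov), whereas you apply the net--modulus argument directly to the averaged process $\nabla M_n$; both are valid, and the paper's reduction has the small advantage of simultaneously delivering the exponential moment needed for \eqref{eq:cond:moments:trois}. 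For the CLT step the paper states the special case of Bonis's theorem with a uniform fourth-moment hypothesis (Proposition~\ref{prop:CLT:logistic}), while you invoke the third-moment form; since sub-Gaussianity controls all moments, the distinction is immaterial.
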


\begin{remark}
The sub-Gaussianity assumption \eqref{eq:Y:subgaussian} of Theorem \ref{thm_classical_m_bnd} on the partial derivatives of $\rho( \theta , X_i )$ with respect to $\theta$ can be checked based on the sub-Gaussianity of $X_1,\ldots,X_n$ only and on regularity properties of $\rho$.  

Indeed, it is known that if a random vector $V$ with values in $\mathbb{R}^k$ has components that are sub-Gaussian with constant $\sigma^2$, then for any $c$-Lipschitz function $f: \mathbb{R}^k \to \mathbb{R}$, the variable $f(V)$ is sub-Gaussian with constant at most of order $k c^2\sigma^2$. The dimensional prefactor can be eliminated for example when the components are independent and satisfy Talagrand's $L^2$ transport-entropy inequality \cite{gozlan}. 
Consider then the case where $X_1,\ldots,X_n$ are uniformly sub-Gaussian and for any $j,k,\ell \in \{1 , \ldots , p\}$, for any $f \in \left\{ \rho \hspace{1mm}, \partial \rho / \partial \theta_j, \hspace{1mm} \partial^2 \rho / \partial \theta_j \partial \theta_k, \hspace{1mm} \partial^3 \rho / \partial \theta_j \partial \theta_k \partial \theta_\ell \right\}$, $f$ is Lipschitz
in its second variable, uniformly in $\theta$, and $|f(\theta,x_{i,0})|$ is bounded, also uniformly in $\theta$, for some reference values $x_{i,0}$ of $X_i$, $i=1,\ldots,n$. 
In this case then the uniform sub-Gaussianity assumption \eqref{eq:Y:subgaussian} of Theorem \ref{thm_classical_m_bnd} holds. 

Note also that these latter assumptions are not minimal. For example, we could relax the Lipschitz assumption on the second derivatives into some quadratic growth. The assumptions on the third derivatives are much stronger than what is necessary to ensure \eqref{eq:cond:moments:trois} to streamline applications: one can check essentially the same conditions on all derivatives up to order three, rather than single out a weaker condition for third derivatives. 
\end{remark}

\begin{remark}
The two possible conditions 1 and 2 in Theorem \ref{thm_classical_m_bnd} are used to ensure that Condition 4 holds. There are other possible ways of verifying it, such as classical chaining techniques used to bound the suprema of stochastic processes when stochastic forms of continuity (in $\theta$) hold, see for example \cite[Chapter 8]{vershynin}. 
\end{remark}

\begin{proof}[Proof of Theorem \ref{thm_classical_m_bnd}]
First we must check that the conditions required by Theorem \ref{theorem:general:bound} are satisfied. By assumptions, this means checking conditions \ref{cond:concentration} to \ref{cond:var}. 

From the sub-Gaussianity and bounded expectation assumption \eqref{eq:Y:subgaussian}, we uniformly control moments of all order, and the first two parts of Condition \ref{cond:moment} hold. 
Condition \ref{cond:concentration} is an immediate consequence of the Gaussian concentration assumption and Chernoff's concentration bound. Condition \ref{cond:var} can be established using the fact that we wish to control the variances of averages of independent variables, and the uniform moment bounds. 

Finally, we need to check that Condition \ref{cond:large:dev} holds, assuming either 1 or 2 holds. If the first one holds, Condition \ref{cond:large:dev} is just a consequence of Markov's inequality. If the second one holds, by continuity, Condition \ref{cond:Theta:covering} and fixing some $\lambda >0$, and some $\epsilon > 0$ small enough, we have for any $k \in \{ 1 , \ldots , n \}$,
\begin{align*}
\mathbb{E}&
\left( 
\exp \left(\lambda \sup_{ \theta \in \ITheta } ||\nabla \rho(\theta, X_k)||
\right)
\right)
 \leq \mathbb{E}
 \left(
\exp \left(
\lambda \sup_{\theta_i, i \leq N} ||\nabla \rho(\theta_i, X_k)|| + \lambda \omega(\epsilon)
\right)
\right) \\
&\leq e^{\lambda \omega(\epsilon)} \sum_{i \leq N} \mathbb{E}(\exp(\lambda ||\nabla \rho(\theta_i, X_k)||)) \\
&\leq C',
\end{align*}
for some constant $0 < C' < \infty$, where the final bound uses the Gaussian concentration of $||\nabla \rho(\theta, X_k)||$ for fixed $\theta$ and the uniform bound on its expectation. 
The same reasoning applies for the second derivatives, and therefore Condition  \ref{cond:large:dev} holds with the same argument as when 1 holds. One can also check \eqref{eq:cond:moments:trois} with the same reasoning. 

Since Theorem \ref{theorem:general:bound} applies, we are reduced to understanding the asymptotic behavior of
$$\sqrt{n}\nabla M_n(\theta_{0,n}) = \frac{1}{\sqrt{n}}\sum_{i=1}^n \nabla \rho(\theta_{0,n}, X_i).$$
Hence we are in the setting of a quantitative central limit theorem for sums of independent random vectors. From the sub-Gaussianity assumption \eqref{eq:Y:subgaussian}, we see that the fourth moments of $\nabla \rho(\theta_{0,n}, X_i)$, $i=1,\ldots,n$, are uniformly bounded. Moreover, by Condition \ref{cond:smallest:eigenvalue:covariance:nabla}, this is not modified by multiplying these vectors by $\bar{C}_{n,0}^{-1/2} $. Hence we are considering a sum of independent random vectors with covariances summing to the identity matrix $I_p$, and we can apply the following statement to conclude the proof, which is a particular case of a result of Bonis \cite[Theorem 11]{bonis2020stein}.
\begin{proposition} \label{prop:CLT:logistic}
	Let $(Z_i)_{i=1,...,n}$ be a sequence of independent random vectors taking values in $\R^p$, each centered, and such that $\Cov(\sum_{i=1}^n Z_i) = n I_p$. Assume moreover that for any $i \in \{ 1 , \ldots ,n \}$, $\E[||Z_i||^4] \leq \beta$, for a given $0 < \beta < \infty$. Then 
	$$
	\mathcal{W}_1 \left( 
	 \frac{1}{\sqrt{n}} \sum_{i=1}^n Z_i, Z
	\right)
	 \leq \frac{ C_0(\beta^{3/2} + p\beta)}{ \sqrt{n}}
	$$
	where $Z$ is a standard Gaussian vector on $\IR^p$, and the constant $C_0$ is a numerical constant that does not depend on $p$ or on the distribution of the $Z_i$'s. 
\end{proposition}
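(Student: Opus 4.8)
The statement is a direct specialization of Bonis's quantitative multivariate central limit theorem \cite[Theorem 11]{bonis2020stein}, which applies to sums of independent (not necessarily identically distributed) centered random vectors whose covariances sum to the identity; the plan is therefore to instantiate that result after a harmless normalization. First I would rescale the summands: setting $W_i = Z_i/\sqrt{n}$, one has $\sum_{i=1}^n W_i = n^{-1/2}\sum_{i=1}^n Z_i$ and $\sum_{i=1}^n \Cov(W_i) = n^{-1}\Cov\big(\sum_{i=1}^n Z_i\big) = I_p$, so the $(W_i)$ are independent, centered and satisfy the covariance normalization required by \cite[Theorem 11]{bonis2020stein}. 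That theorem yields a bound on $\mathcal{W}_1\big(\sum_i W_i, Z\big)$ (directly, or via $\mathcal{W}_1 \le \mathcal{W}_2$ and Jensen's inequality applied to an optimal coupling) in terms of the moment functionals $\sum_i \E\|W_i\|^3$ and $\sum_i \E\|W_i\|^4$, with explicit polynomial-in-$p$ prefactors.

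Second, I would estimate these functionals from the hypothesis $\E\|Z_i\|^4 \le \beta$. By the Lyapunov inequality, $\E\|Z_i\|^3 \le (\E\|Z_i\|^4)^{3/4} \le \beta^{3/4}$ and $\E\|Z_i\|^2 \le \beta^{1/2}$ for every $i$, whence $\sum_i \E\|W_i\|^3 = n^{-3/2}\sum_i \E\|Z_i\|^3 \le n^{-1/2}\beta^{3/4}$ and $\sum_i \E\|W_i\|^4 = n^{-2}\sum_i \E\|Z_i\|^4 \le n^{-1}\beta$. I would also record that $\sum_i \E\|Z_i\|^2 = \Tr\big(\Cov(\sum_i Z_i)\big) = \Tr(n I_p) = np$, so that averaging together with $\E\|Z_i\|^2 \le \beta^{1/2}$ forces $\beta \ge p^2$. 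Substituting the moment bounds into Bonis's estimate, every resulting term is $n^{-1/2}$ times a monomial in $p$ and $\beta$; using $\beta \ge p^2 \ge 1$, the contributions coming from the $\sum_i\E\|W_i\|^3$ part are dominated by $n^{-1/2}\beta^{3/2}$ while those coming from the $\sum_i\E\|W_i\|^4$ part are dominated by $n^{-1/2}p\beta$. Absorbing the numerical constants into a single $C_0$, which depends neither on $p$ nor on the laws of the $Z_i$, gives the claimed inequality.

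The only genuinely delicate point is the bookkeeping of the dimension dependence: one must use the precise dimensional prefactors appearing in \cite[Theorem 11]{bonis2020stein} and check that, after the substitution above and after invoking $\beta \ge p^2$, they collapse to exactly $C_0(\beta^{3/2} + p\beta)/\sqrt{n}$ rather than to a larger power of $\beta$ or of $p$. Everything else is a routine calculation.
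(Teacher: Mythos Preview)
Your proposal is correct and matches the paper's own treatment: the paper does not give a separate proof but simply states that the proposition is a particular case of \cite[Theorem~11]{bonis2020stein}, which is exactly the result you instantiate. Your additional bookkeeping (the rescaling $W_i=Z_i/\sqrt{n}$, the Lyapunov estimates, and the observation $\beta\ge p^2$ from $\sum_i\E\|Z_i\|^2=np$) merely spells out why the specialization goes through, and is consistent with the paper's one-line appeal to Bonis.
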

\end{proof}

\subsection{Parameter estimation in logistic regression}
\label{subsection:logistic}

We shall now present the simple example of logistic regression, where Theorem \ref{thm_classical_m_bnd} is applied to a maximum likelihood estimator. 
We consider a deterministic sequence $(x_i)_{i \in \mathbb{N}}$ of vectors in $\mathbb{R}^p$. To match the assumptions of Theorem \ref{thm_classical_m_bnd}, we assume this sequence to be bounded.

\begin{condition} \label{cond:bounded:x:logistic}
There is a constant $0 < C_{x,1} < \infty$ such that for $i \in \IN$,
\[
 ||x_i|| \leq C_{x,1}.
\] 	
\end{condition}

As previously, we let $\Theta$ be a fixed compact subset of $\mathbb{R}^p$.  We let $\theta_0 \in \ITheta$ be fixed.
 We consider a sequence $(y_i)_{i \in \mathbb{N}}$ of independent random variables with, for $i \in \mathbb{N}$, $y_i \in \{ 0 , 1\}$ and 
\begin{equation} \label{eq:proba:theta:zero:logistic}
P( y_i = 1 ) = \frac{e^{x_i^\top \theta_0}}{ 1 + e^{x_i^\top \theta_0} }.
\end{equation}
We let, for $\theta \in \Theta$,
\[
p_{i,\theta} = \frac{e^{x_i^\top \theta}}{ 1 + e^{x_i^\top \theta} }.
\]

Hence, we are in the classical well-specified case where the parameter $\theta_0 \in \Theta$ characterizes the data generating process, or distribution, of $y_1 , \ldots , y_n$. 
The likelihood function of $y_i$ is, for $\theta \in \Theta$,
\[
\mathcal{L}(\theta , y_i )
=
p_{i,\theta}^{y_i} (1 - p_{i,\theta})^{1-y_i}.
\]

Minus the logarithm of the likelihood of $y_i$ is, for $\theta \in \Theta$,
\begin{align*}
	\rho( \theta,x_i,y_i )
	& =
	- y_i \log(p_{i,\theta}) - (1 - y_i) \log(1 - p_{i,\theta})
	\\
%	& =
%	- y_i
%	\log \left( \frac{e^{x_i^\top \theta}}{ 1 + e^{x_i^\top \theta}} \right)
%	-
%	(1-y_i) \log \left( \frac{1}{ 1 + e^{x_i^\top \theta}} \right)
%	\\
	& =
	- y_i x_i^\top \theta + \log \left( 1 + e^{x_i^\top \theta} \right).
\end{align*}

Hence minus the normalized log likelihood function is, for $\theta \in \Theta$,
\begin{equation} \label{eq:Mn:logistic}
M_n(\theta)
=
\frac{1}{n}
\sum_{i=1}^n
\left(
- y_i x_i^\top \theta + \log \left( 1 + e^{x_i^\top \theta} \right)
\right).
\end{equation}

Note that we do not have an explicit expression for the minimizer of $M_n$. 
 We have, for $\theta \in \ITheta$,
\begin{equation} \label{eq:gradient:logistic}
	\nabla M_n(\theta)  = 
	\frac{1}{n}
	\sum_{i=1}^n
	\left(
	- y_i x_i + 
	\frac{e^{x_i^\top \theta}}{1 + e^{x_i^\top \theta}} x_i
	\right)
	= 
	\frac{1}{n}
	\sum_{i=1}^n
	\left(
	- y_i x_i + 
	p_{i,\theta} x_i
	\right). 
\end{equation}
We also have, for $\theta \in \ITheta$,
\begin{equation} \label{eq:hessian:logistic}
	\nabla^2 M_n(\theta)  = 
	\frac{1}{n}
	\sum_{i=1}^n
	\frac{e^{x_i^\top \theta} (1 + e^{x_i^\top \theta})
		-e^{x_i^\top \theta} e^{x_i^\top \theta}
	}{(1 + e^{x_i^\top \theta})^2} x_i x_i^\top
 = 
	\frac{1}{n}
	\sum_{i=1}^n
	\frac{e^{x_i^\top \theta}}{(1 + e^{x_i^\top \theta})^2} x_i x_i^\top.
\end{equation}
Hence we see that $M_n(\theta)$ is convex with respect to $\theta$.
Next, we assume that the empirical second moment matrix of the $x_i$'s is asymptotically strictly positive definite. This type of condition is common with logistic regression \cite{bachoc2020uniformly,Fahrmeir90,lv14model} and enables to have asymptotic identifiability (Condition \ref{cond:local:identifiability}). 

\begin{condition} \label{cond:lambda:inf:cov:x:logistic}
There are constants $0 < c_{x,2} < \infty$ and $N_{x,2} \in \IN$ such that, for $n \geq N_{x,2}$,
\[
\lambda_p
\left(
\frac{1}{n}
\sum_{i=1}^n x_i x_i^\top 
\right)
\geq c_{x,2}.
\]
\end{condition}

We can now state the Wasserstein bound on the asymptotic normality of the maximum likelihood estimator, in logistic regression. To our knowledge, this is the first established rate of convergence of asymptotic normality in logistic regression. 

\begin{theorem} \label{theorem:logistic:bound}
	Assume that $\Theta$ satisfies Condition \ref{cond:Theta:covering}.
	Assume that Conditions \ref{cond:bounded:x:logistic} and \ref{cond:lambda:inf:cov:x:logistic} hold. 
	Consider $M_n$ in \eqref{eq:Mn:logistic}, $\hat{\theta}_n$ as in \eqref{eq:hat:theta:general}, $\theta_0$ as defined in \eqref{eq:proba:theta:zero:logistic},
	$\bar{C}_{n,0}$ as in \eqref{eq:cov:score:general} and $\bar{H}_{n,0}$ as in \eqref{eq:mean:hessian:general}.
	Then, there are constants $0 < C_{\text{log},1} < \infty$, $0 < C_{\text{log},2} < \infty$ and $N_{\text{log}} \in \IN$ such that for $n \geq N_{\text{log}}$, with $Z$ following the standard Gaussian distribution on $\IR^p$,
	\begin{align*}
		\mathcal{W}_1 \left(
		\bar{C}_{n,0}^{-1/2} \bar{H}_{n,0}
		\sqrt{n}  (\hat{\theta}_n - \theta_{0}) 
		, 
		Z  
		\right)
		\leq 
		C_{\text{log},1} 
		\frac{(\log n)^{C_{\text{log},2}}}{ \sqrt{n} } . 
	\end{align*}
\end{theorem}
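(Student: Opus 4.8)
The plan is to exhibit this theorem as a special case of Theorem~\ref{thm_classical_m_bnd}. I would encode the deterministic covariates into the data by setting $X_i = (x_i, y_i)$, with $x_i$ a degenerate (deterministic) coordinate, and let $\rho(\theta, (x,y)) = -y\, x^\top \theta + \log(1 + e^{x^\top \theta})$, so that $M_n$ from \eqref{eq:Mn:logistic} is exactly of the form \eqref{eq:M:n:generic:average} with the $X_i$ independent (this independence reduces to that of the $y_i$, the $x_i$ being constants). Condition~\ref{cond:Theta:covering} is assumed, so the task is to verify \eqref{eq:Y:subgaussian}, Condition~\ref{cond:smoothness}, Conditions~\ref{cond:global:identifiability}--\ref{cond:smallest:eigenvalue:covariance:nabla}, and one of the two alternatives of Theorem~\ref{thm_classical_m_bnd}; its conclusion is then precisely the stated bound. (Alternatively one checks that the proof of Theorem~\ref{thm_classical_m_bnd} goes through verbatim when $\rho$ is allowed to depend on $i$ with uniform-in-$i$ bounds.)

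First I would observe that, writing $g(t) = e^t/(1+e^t)^2$, the function $\rho$ and all its $\theta$-partial derivatives up to order three are bounded on $\ITheta$ uniformly in $i$ and in $y_i$: indeed $\Theta$ is compact and $\|x_i\| \le C_{x,1}$ by Condition~\ref{cond:bounded:x:logistic}, so $|x_i^\top\theta|$ stays in a fixed compact set, and by \eqref{eq:gradient:logistic}--\eqref{eq:hessian:logistic} one has $\partial\rho/\partial\theta_j = (p_{i,\theta}-y_i)x_{i,j}$, $\partial^2\rho/\partial\theta_j\partial\theta_k = g(x_i^\top\theta)\,x_{i,j}x_{i,k}$ and $\partial^3\rho/\partial\theta_j\partial\theta_k\partial\theta_\ell = g'(x_i^\top\theta)\,x_{i,j}x_{i,k}x_{i,\ell}$, all bounded since $|p_{i,\theta}-y_i|\le 1$ and $g, g'$ are bounded on $\mathbb{R}$, while $\rho(\theta,X_i)$ itself is bounded because $y_i\in\{0,1\}$. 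Bounded random variables are sub-Gaussian with a uniformly bounded constant and have uniformly bounded absolute expectation, so \eqref{eq:Y:subgaussian} holds; the same boundedness makes the exponential-moment bounds of the first alternative in Theorem~\ref{thm_classical_m_bnd} immediate. Condition~\ref{cond:smoothness} is clear: $t\mapsto\log(1+e^t)$ is $C^\infty$, all quantities are bounded, and since $y_i$ enters $\rho$ only linearly the expectation--derivative commutations are trivial.

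Then I would take $\theta_{0,n}=\theta_0$ for all $n$; since $\theta_0\in\ITheta$ there is $c_{\theta_0}>0$ with $B(\theta_0,c_{\theta_0})\subseteq\ITheta$, and $\mathbb{E}(\nabla M_n(\theta_0)) = \tfrac1n\sum_i(p_{i,\theta_0}-\mathbb{E}(y_i))x_i = 0$ by \eqref{eq:gradient:logistic} and \eqref{eq:proba:theta:zero:logistic}. Using independence of the $y_i$, a one-line computation gives $\bar{C}_{n,0} = \tfrac1n\sum_i p_{i,\theta_0}(1-p_{i,\theta_0})x_ix_i^\top$, which equals $\bar{H}_{n,0}$ by \eqref{eq:hessian:logistic} since $p_{i,\theta_0}(1-p_{i,\theta_0}) = g(x_i^\top\theta_0)$. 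As $|x_i^\top\theta_0|$ is bounded, $g(x_i^\top\theta_0)\ge\delta_0>0$ with $\delta_0$ independent of $i$, whence $\bar{C}_{n,0}=\bar{H}_{n,0}\succeq\delta_0\,\tfrac1n\sum_ix_ix_i^\top\succeq\delta_0 c_{x,2}I_p$ for $n\ge N_{x,2}$ by Condition~\ref{cond:lambda:inf:cov:x:logistic}, giving Conditions~\ref{cond:local:identifiability} and \ref{cond:smallest:eigenvalue:covariance:nabla}. For Condition~\ref{cond:global:identifiability} I would use strong convexity of $f_n(\theta):=\mathbb{E}(M_n(\theta))$: its Hessian is $\tfrac1n\sum_i g(x_i^\top\theta)x_ix_i^\top$, and since $g(x_i^\top\theta)\ge\delta_1>0$ uniformly over the compact $\Theta$ and over $i$, Condition~\ref{cond:lambda:inf:cov:x:logistic} yields $\nabla^2 f_n\succeq\delta_1 c_{x,2}I_p$ for $n\ge N_{x,2}$; combined with $\nabla f_n(\theta_0)=0$ this gives $f_n(\theta)-f_n(\theta_0)\ge\tfrac12\delta_1 c_{x,2}\|\theta-\theta_0\|^2\ge\tfrac12\delta_1 c_{x,2}r^2$ for $\|\theta-\theta_0\|\ge r$, i.e.\ Condition~\ref{cond:global:identifiability} with $N_r=N_{x,2}$ and $c_r=\tfrac12\delta_1 c_{x,2}r^2$. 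With all hypotheses verified, Theorem~\ref{thm_classical_m_bnd} applies and concludes the proof. I expect the only step requiring a genuine (if light) argument to be this last one — the strong-convexity/identifiability claim — where the work is to combine the uniform lower bound on the logistic variance function $g$ over the compact $\Theta$ with the asymptotic positive-definiteness of the empirical second-moment matrix of the $x_i$.
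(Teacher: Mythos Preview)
Your proposal is correct and follows essentially the same route as the paper's own proof: both set $X_i=(x_i,y_i)$, reduce to Theorem~\ref{thm_classical_m_bnd}, verify \eqref{eq:Y:subgaussian} and option~1 via uniform boundedness, and establish Conditions~\ref{cond:global:identifiability}--\ref{cond:smallest:eigenvalue:covariance:nabla} by combining the identity $\bar{C}_{n,0}=\bar{H}_{n,0}$ with the Hessian lower bound \eqref{eq:lambda:inf:Hessian:logistic} coming from Condition~\ref{cond:lambda:inf:cov:x:logistic} and the uniform positivity of the logistic variance function, then using strong convexity for global identifiability. Your write-up is in fact slightly more explicit than the paper's (you spell out the computation $\bar{C}_{n,0}=\bar{H}_{n,0}$ and the constants $c_r$, $N_r$), but the argument is the same.
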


\subsection{Covariance parameter estimation for Gaussian processes by cross validation} \label{subsection:cross:validation}

Our last example stems from the field of spatial statistics \cite{Bachoc2013cross,bachoc14asymptotic,bachoc2020spatial,chiles2009geostatistics,cressie2015statistics,hallin2009local,wackernagel2013multivariate,zhang04inconsistent,Zhang2010kriging}. The goal is to illustrate the benefit of Theorem \ref{theorem:general:bound} to a situation where the observations are dependent and where $M_n$ does not correspond to a likelihood. We stress that $\hat{\theta}_n$ has no explicit expression. 

We consider a sequence $(x_i)_{i \in \mathbb{N}}$ of deterministic vectors in $\mathbb{R}^d$, that we call observation points. Then, for $n \in \IN$, the observed data consist in a vector $y^{(n)}$ of size $n \times 1$ which component $i$ is $\xi( x_i )$, where $\xi : \mathbb{R}^d \to \mathbb{R}$ is a centered Gaussian process. 

We are interested in the parametric estimation of the correlation function of $\xi$, based on a parametric set of stationary correlation functions $\{  k_{\theta} ; \theta \in \Theta \}$, where for $\theta \in \Theta$, $k_{\theta} : \mathbb{R}^d \to \mathbb{R}$ and $(u , v) \in \mathbb{R}^{2d} \mapsto k_{\theta}(u-v)$ is a correlation function. For an introduction to usual parametric sets of stationary correlation functions in spatial statistics, we refer for instance to  \cite{Bachoc2021asymptotic,chiles2009geostatistics,cressie2015statistics,genton2015cross,wackernagel2013multivariate}.

As an estimator for $\theta$, we consider the minimization of the average of square leave-one-out errors, letting, for $\theta \in \Theta$,
\begin{equation*} 
	M_{n}(\theta)
	=
	\frac{1}{n}
	\sum_{i=1}^n
	\left(
	y^{(n)}_i
	-
	\mathbb{E}_{\theta}
	( y^{(n)}_i | y^{(n)}_{-i} )
	\right)^2.
\end{equation*}
Above, $y^{(n)}_{-i}$ is obtained from $y^{(n)}$ by deleting the component $i$ and $\mathbb{E}_{\theta}( \cdot | \cdot )$ means that the conditional expectation is computed as if the Gaussian process $\xi$ had correlation function $(u , v) \in \mathbb{R}^{2d} \mapsto k_{\theta}(u-v)$.
Now, for $\theta \in \Theta$, let $R_{n,\theta}$ be the $n \times n$ matrix with coefficient $i,j$ equal to $k_{\theta}(x_i - x_j)$, that is, the correlation matrix of $y^{(n)}$ under correlation function given by $k_{\theta}$.  Then, from for instance \cite{Bachoc2013cross,dubrule83cross,Zhang2010kriging} (to which we refer for more background and discussions on cross validation for Gaussian processes), we have
\begin{equation}  \label{eq:Mntheta:CV}
	M_{n}(\theta)
	=
	\frac{1}{n}
	y^{(n)\top}
	R_{n,\theta}^{-1}
	\mathrm{diag}(R_{n,\theta}^{-1})^{-2}
	R_{n,\theta}^{-1}
	y^{(n)},
\end{equation}
where $\mathrm{diag}(M)$ is obtained by setting the off-diagonal elements of a square matrix $M$ to zero. 

For $n \in \IN$, we let $\theta_{0,n} = \theta_0$, where $\theta_0$ is a fixed element of $\ITheta$ such that $\xi$ has correlation function $k_{\theta_0}$, which also implies that $y^{(n)}$ has correlation matrix $R_{n,\theta_0}$. This corresponds to a well-specified parametric set of correlation functions. 
The next condition means that we consider the increasing-domain asymptotic framework, where the sequence of observation points is unbounded, with a minimal distance between any two distinct points \cite{bachoc14asymptotic,cressie2015statistics,mardia84maximum}.  

\begin{condition} \label{cond:minimal:distance}
There is a constant $c_{x} >0$ such that for $i,j \in \IN$, $i \neq j$,
\begin{equation*} 
	|| x_i - x_j ||  
	\geq c_x.
\end{equation*}
\end{condition}

The next condition is a lower bound on the smallest eigenvalues of the correlation matrices from the parametric model. Given the increasing-domain asymptotic framework (Condition \ref{cond:minimal:distance}), this lower bound indeed holds for a large class of families of stationary correlation functions \cite{bachoc14asymptotic,bachoc2016smallest}.

\begin{condition} \label{cond:smallest:eigen:value}
	There is a constant $0 < c_{R,1} < \infty$ such that
	\[
	\inf_{n \in \mathbb{N}} \inf_{\theta \in \Theta} 
	\lambda_n( R_{n,\theta} )
	\geq c_{R,1}.
	\]
\end{condition}

Next, we assume a third-order smoothness with respect to $\theta$ as well as a decay of the correlation at large distance. As before, many families of stationary correlation functions do satisfy this. 

\begin{condition} \label{cond:cross:validation:smoothness:and:bounds}
	For any $x \in \IR^d$, $k_{\theta}(x)$ is three times continuously differentiable with respect to $\theta$ on $\ITheta$. There exist constants $0 < C_{R,2} < \infty$ and $0 < c_{R,2} < \infty$ such that for $\theta \in \Theta$, for $x \in \IR^d$,	
	\begin{equation} \label{eq:bound:R}
		\left|
		k_{\theta}(x)
		\right|
		\leq 
		\frac{C_{R,2}}{1+||x||^{d+c_{R,2}}},
		~ ~
		n \in \mathbb{N}
	\end{equation}
	and for $\theta \in \ITheta$, for $x \in \IR^d$,	
	\begin{equation} \label{eq:bound:R:derivatives}
		\max_{\substack{k \in \{1,2,3 \} \\ i_1,\ldots,i_k \in \{1 , \ldots , p \}}}
		\left|
		\frac{\partial^k}{\partial \theta_{i_1},\ldots,\partial \theta_{i_k}}
		k_{\theta}(x)
		\right|
		\leq 
		\frac{C_{R,2}}{1+||x||^{d+c_{R,2}}},
		~ ~
		n \in \mathbb{N}.
	\end{equation}
\end{condition}

The next condition is interpreted as a global identifiability of the correlation parameter. This condition is already made in the increasing-domain asymptotic literature on cross validation and is not restrictive on the sequence $(x_i)_{i \in \IN}$ and the set $\{ k_{\theta} \}$ \cite{bachoc14asymptotic,bachoc2020asymptotic}.

\begin{condition} \label{cond:cross:validation:global:identifiability}
	For all $\mathcal{X} >0$, there are constants $0 < c_{\mathcal{X}} < \infty$ and $N_{\mathcal{X}} \in \IN$ such that for $n \geq N_{\mathcal{X}}$,
	\begin{equation*}
		\inf_{\substack{\theta \in \Theta \\ || \theta - \theta_0 || \geq \mathcal{X} }}
		\frac{1}{n}
		\sum_{i,j=1}^n
		\left(
	k_{\theta}(x_i - x_j)
		-
k_{\theta_0}(x_i - x_j)
		\right)^2
		\geq c_{\mathcal{X}}.
	\end{equation*}
\end{condition}

Finally, the last condition is interpreted as a local identifiability of the correlation parameter around $\theta_0$. Its discussion is similar to the previous one. 

\begin{condition} \label{cond:cross:validation:local:identifiability}
	For all $\alpha_1,\ldots,\alpha_p \in \mathbb{R}$, with $\alpha_1^2 + \dots + \alpha_p^2 >0$, there are constants $0 < c_{\alpha} < \infty$ and $N_{\alpha} \in \IN$ such that for $n \geq N_{\alpha}$,
	\begin{equation*}
		\frac{1}{n}
		\sum_{i,j=1}^n
		\left(
		\sum_{\ell=1}^p
		\alpha_\ell
		\frac{ \partial  k_{\theta_0}  (x_i - x_j) }{ \partial \theta_\ell}
		\right)^2
		\geq c_{\alpha}.
	\end{equation*}
\end{condition}

Under the above conditions, it is known from \cite{bachoc14asymptotic,bachoc2020asymptotic} that $n^{1/2} ( \hat{\theta}_n  - \theta_0 )$ converges in distribution to a centered Gaussian vector with covariance matrix $\bar{H}_{n,0}^{-1} 	\bar{C}_{n,0}  \bar{H}_{n,0}^{-1} $, with the notation of \eqref{eq:cov:score:general} and \eqref{eq:mean:hessian:general}. Based on Theorem \ref{theorem:general:bound}, we can show that the rate of this convergence is $n^{-1/2}$ (up to a log factor) in Wasserstein distance. To the best of our knowledge, this is the first result of this kind for
cross validation estimation for spatial Gaussian processes. 
We remark that Theorem \ref{theorem:general:bound} also enables to address maximum likelihood
estimation of covariance parameters (see for instance \cite{bachoc14asymptotic,cressie2015statistics}), but we focus on cross validation for the sake of brevity and to highlight the benefits of  Theorem \ref{theorem:general:bound} beyond maximum likelihood.

\begin{theorem} \label{theorem:cross:validation:bound}
	Assume that $\Theta$ satisfies Condition \ref{cond:Theta:covering}.
	Assume that Conditions \ref{cond:minimal:distance} to \ref{cond:cross:validation:local:identifiability} hold. 
	Consider $M_n$ in \eqref{eq:Mntheta:CV}. 
	Consider then $\hat{\theta}_n$ as in \eqref{eq:hat:theta:general}, $\theta_0$ as defined after \eqref{eq:Mntheta:CV},
$\bar{C}_{n,0}$ as in \eqref{eq:cov:score:general} and $\bar{H}_{n,0}$ as in \eqref{eq:mean:hessian:general}.
	Then, there are constants $0 < C_{\text{CV},1} < \infty$, $0 < C_{\text{CV},2} < \infty$ and $N_{\text{CV}} \in \IN$ such that for $n \geq N_{\text{CV}}$, with $Z$ following the standard Gaussian distribution on $\IR^p$,
	\begin{align*}
		\mathcal{W}_1 \left(
		\bar{C}_{n,0}^{-1/2} \bar{H}_{n,0}
		\sqrt{n}  (\hat{\theta}_n - \theta_{0}) 
		, 
		Z  
		\right)
		\leq 
		C_{\text{CV},1} 
		\frac{(\log n)^{C_{\text{CV},2}}}{ \sqrt{n} } . 
	\end{align*}
\end{theorem}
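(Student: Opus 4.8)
The plan is to apply the abstract Theorem~\ref{theorem:general:bound}. This requires two things: verifying Conditions~\ref{cond:Theta:covering}--\ref{cond:smallest:eigenvalue:covariance:nabla} in the present cross validation setting (Condition~\ref{cond:Theta:covering} is assumed), and then bounding $\mathcal{W}_1(\bar{C}_{n,0}^{-1/2}\sqrt{n}\,\nabla M_n(\theta_0),Z)$. The structural fact that drives everything is that, by \eqref{eq:Mntheta:CV}, $M_n(\theta)=\tfrac1n (y^{(n)})^{\top}A_{n,\theta}\,y^{(n)}$ with $A_{n,\theta}=R_{n,\theta}^{-1}\mathrm{diag}(R_{n,\theta}^{-1})^{-2}R_{n,\theta}^{-1}$ and $y^{(n)}\sim\mathcal N(0,R_{n,\theta_0})$, and each $\theta$-derivative of $M_n$ up to order three is again such a quadratic form whose coefficient matrix is the corresponding $\theta$-derivative of $A_{n,\theta}$ (a polynomial expression in $R_{n,\theta}^{-1}$, $\mathrm{diag}(R_{n,\theta}^{-1})^{-2}$ and their $\theta$-derivatives). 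A Schur/Gershgorin row-sum estimate, using the polynomial decay of $k_\theta$ and its $\theta$-derivatives (Condition~\ref{cond:cross:validation:smoothness:and:bounds}) together with the minimal separation of the design points (Condition~\ref{cond:minimal:distance}), bounds $\|R_{n,\theta}\|_{\mathrm{op}}$ and the operator norms of its $\theta$-derivatives uniformly in $n$ and $\theta$; combined with Condition~\ref{cond:smallest:eigen:value} and the product/quotient rules for differentiating $R_{n,\theta}^{-1}$ and $\mathrm{diag}(R_{n,\theta}^{-1})^{-2}$, this gives $\|A_{n,\theta}\|_{\mathrm{op}}=O(1)$, uniformly, for every coefficient matrix that appears, hence also $\Tr(A^2),\Tr(A^4)=O(n)$ for each of them.

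With these bounds, the abstract conditions follow along lines that are by now standard in the increasing-domain literature \cite{bachoc14asymptotic,bachoc2016smallest,bachoc2020asymptotic}. Condition~\ref{cond:smoothness} uses that $k_\theta\in C^3$ and that the quadratic forms and their $\theta$-derivatives are dominated by integrable functions of $y^{(n)}$. Condition~\ref{cond:concentration} follows from Gaussian-chaos (Hanson--Wright) concentration: for a coefficient matrix $A$ with $\|\tfrac1n A\|_{\mathrm{op}}=O(1/n)$ and $\|\tfrac1n A\|_{\mathrm{HS}}^2=\tfrac1{n^2}\Tr(A^2)=O(1/n)$ one gets a tail $\leq C\exp(-cn\epsilon^2)$ for $\epsilon$ bounded. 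Conditions~\ref{cond:large:dev} and \ref{cond:moment} follow from the crude uniform bound $\|\nabla^k M_n(\theta)\|\leq\tfrac{C}{n}\|y^{(n)}\|^2$ together with the sub-exponential concentration and the moments of $\|y^{(n)}\|^2$ (mean $\Tr(R_{n,\theta_0})=O(n)$). Condition~\ref{cond:var} follows from $\Var(\tfrac1n (y^{(n)})^{\top}Ay^{(n)})=\tfrac{2}{n^2}\Tr\big((R_{n,\theta_0}^{1/2}AR_{n,\theta_0}^{1/2})^2\big)=O(1/n)$. For Conditions~\ref{cond:global:identifiability} and \ref{cond:local:identifiability}, the bias--variance decomposition of the leave-one-out criterion, $\E(M_n(\theta))-\E(M_n(\theta_0))=\tfrac1n\sum_{i=1}^n\E_{\theta_0}\big[(\E_{\theta_0}(y^{(n)}_i\mid y^{(n)}_{-i})-\E_{\theta}(y^{(n)}_i\mid y^{(n)}_{-i}))^2\big]\geq 0$, shows that $\theta_0$ minimizes $\E(M_n(\cdot))$, so $\E(\nabla M_n(\theta_0))=\nabla\E(M_n(\theta_0))=0$; bounding this sum, and its Hessian at $\theta_0$, from below in terms of $\tfrac1n\sum_{i,j}(k_\theta(x_i-x_j)-k_{\theta_0}(x_i-x_j))^2$ and $\tfrac1n\sum_{i,j}(\sum_\ell\alpha_\ell\partial_\ell k_{\theta_0}(x_i-x_j))^2$ respectively, and then invoking Conditions~\ref{cond:cross:validation:global:identifiability} and \ref{cond:cross:validation:local:identifiability}, yields the required separation and $\lambda_p(\bar H_{n,0})\geq c$. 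The non-degeneracy of $\bar C_{n,0}$ (Condition~\ref{cond:smallest:eigenvalue:covariance:nabla}) is likewise obtained from the explicit quadratic-form representation of $\nabla M_n(\theta_0)$, as in \cite{bachoc14asymptotic,bachoc2020asymptotic}.

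It remains to bound $\mathcal W_1(\bar C_{n,0}^{-1/2}\sqrt n\,\nabla M_n(\theta_0),Z)$. Writing $y^{(n)}=R_{n,\theta_0}^{1/2}W$ with $W$ a standard Gaussian vector, the $\ell$-th component of $\sqrt n\,\nabla M_n(\theta_0)$ is $\tfrac1{\sqrt n}W^{\top}B_{n,\ell}W$ with $B_{n,\ell}=R_{n,\theta_0}^{1/2}(\partial_\ell A_{n,\theta_0})R_{n,\theta_0}^{1/2}$ symmetric, centered (since $\E(\nabla M_n(\theta_0))=0$ forces $\Tr(B_{n,\ell})=0$), $\|B_{n,\ell}\|_{\mathrm{op}}=O(1)$, hence $\Tr(B_{n,\ell}^2)=O(n)$ (so $\bar C_{n,0}$ has bounded entries, consistently with Condition~\ref{cond:var}) and $\Tr(B_{n,\ell}^4)=O(n)$. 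Thus $\bar C_{n,0}^{-1/2}\sqrt n\,\nabla M_n(\theta_0)$ is a $p$-dimensional vector of elements of the second Wiener chaos of $W$ with covariance $I_p$. Applying a quantitative multivariate fourth-moment / Breuer--Major theorem (in the spirit of \cite{nourdin2009stein,nourdin2011breuer,nourdin2012livre}) bounds its $\mathcal W_1$-distance to $Z$ by a combination of the diagonal fourth cumulants $\kappa_4(\cdot)$ and analogous off-diagonal traces, each a normalized fourth-order trace of products of the $B_{n,\ell}$'s, hence $O(1/n)$; after the square root this gives $\mathcal W_1(\bar C_{n,0}^{-1/2}\sqrt n\,\nabla M_n(\theta_0),Z)=O(n^{-1/2})$. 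Feeding this into Theorem~\ref{theorem:general:bound} yields the asserted bound, the extra factor $(\log n)^{C_{\mathrm{CV},2}}$ coming from that theorem.

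The bulk of the work is the verification of Conditions~\ref{cond:smoothness}--\ref{cond:smallest:eigenvalue:covariance:nabla}: none of it is conceptually new, but it requires assembling the increasing-domain toolbox uniformly over $\theta\in\Theta$ and $n$, and the single most delicate point is, as usual, the quantitative non-degeneracy of $\bar C_{n,0}$ in Condition~\ref{cond:smallest:eigenvalue:covariance:nabla}, which we import from \cite{bachoc14asymptotic,bachoc2020asymptotic}. The genuinely new step is the one in the previous paragraph: establishing the $n^{-1/2}$ fourth-moment bound for the normalized gradient and checking that the pre-multiplication by $\bar C_{n,0}^{-1/2}$ does not degrade it (which is exactly where Condition~\ref{cond:smallest:eigenvalue:covariance:nabla} enters again). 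Given the uniform operator-norm estimates obtained at the start, this last step is fairly clean, so the real obstacle is collecting those uniform estimates rather than any one isolated difficulty.
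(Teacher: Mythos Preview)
Your proposal is essentially the same strategy as the paper's: verify Conditions~\ref{cond:smoothness}--\ref{cond:smallest:eigenvalue:covariance:nabla} via the quadratic-form structure and uniform operator-norm bounds, then control the normalized gradient by a second-Wiener-chaos/fourth-moment bound (the paper packages this as Proposition~\ref{proposition:wasserstein:quadratic:form}, citing \cite{nourdin2010clt}, which gives exactly your $O(n^{-1/2})$ via $\Tr((KA_iKA_j)^2)=O(1/n)$).

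The one place where your sketch understates the difficulty is Condition~\ref{cond:smallest:eigenvalue:covariance:nabla}. You write that the lower bound on $\lambda_p(\bar C_{n,0})$ is ``imported from \cite{bachoc14asymptotic,bachoc2020asymptotic}'', but the paper does \emph{not} simply cite this: it supplies its own argument (Lemma~\ref{lemma:lambda:inf:cov:CV}), and that argument is not a direct trace/eigenvalue computation from the quadratic-form representation. It is a proof by contradiction using Le~Cam contiguity: assuming $\lambda_p(\bar C_{n,0})\to 0$ along a subsequence, one shows that $\mathbb{P}_{n,0}$ and $\mathbb{P}_{n,K/\sqrt n}$ are mutually contiguous, transfers the vanishing of $\sqrt n\,M_n'(0)$ from $\mathbb{P}_{n,0}$ to $\mathbb{P}_{n,K/\sqrt n}$, and then contradicts the Hessian lower bound (Lemma~\ref{lemma:lambda:inf:Hessian:CV}). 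This is genuinely the delicate step you flag, but it is proved here rather than black-boxed, and the earlier references do not obviously contain the uniform-in-$n$ lower bound in the form Condition~\ref{cond:smallest:eigenvalue:covariance:nabla} requires. So your plan is correct, but at that one point you would need to supply (or locate) an actual argument rather than a citation.
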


\appendix

\section{Proofs for Section \ref{section:general:bounds}}

\begin{lemma} \label{lemma:uniform:concentration:Mn}
Assume that Conditions \ref{cond:Theta:covering} to \ref{cond:moment} hold.
Then there are constants $0 < c_{M,1} < \infty$, $0 < c'_{M,1} < \infty$, $0 < C_{M,1} < \infty$ and $0 < C'_{M,1} < \infty$ such that, for $0 < t \leq c'_{M,1}$ and $K \geq C'_{M,1}$, 
	\begin{equation*}
		\mathbb{P} \left( \sup_{\theta \in \Theta} 
		\left|
		M_n(\theta) - \mathbb{E}( M_n( \theta ) )
		\right|
		\geq t
		\right)
		\leq 
		C_{M,1} \frac{K^p}{t^p}
		\exp( - n c_{M,1} t^2 )
		+
		C_{M,1} n \exp(- c_{M,1} K  ).
	\end{equation*}
\end{lemma}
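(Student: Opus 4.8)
The plan is a standard chaining/covering argument, turning the pointwise concentration of Condition \ref{cond:concentration} into a uniform-in-$\theta$ statement, at the cost of a polynomial prefactor in the covering number and an extra term controlling the Lipschitz constant of $M_n$ over $\Theta$. First I would fix $t$ and $K$ as in the statement, set $\epsilon = t/(pK)$ (or some comparable choice, shrunk if needed so that $\epsilon \le \min(c'_\Theta, \text{constant})$, which is legitimate since $t$ is bounded and $K$ is bounded below), and invoke Condition \ref{cond:Theta:covering} to obtain points $\theta_1,\dots,\theta_N$ with $N \le C_\Theta \epsilon^{-p}$ such that every $\theta \in \Theta$ has some $\theta_i$ with $\|\theta - \theta_i\| \le \epsilon$ and $(\theta,\theta_i) \subseteq \ITheta$. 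The latter containment is what lets me use the gradient along the open segment.

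Next, on the event $\mathcal{A}_K = \{ \sup_{\theta \in \ITheta} \|\nabla M_n(\theta)\| \le K \}$, the function $M_n$ is $K$-Lipschitz on each segment $(\theta,\theta_i)$, hence by continuity $|M_n(\theta) - M_n(\theta_i)| \le K\|\theta-\theta_i\| \le K\epsilon$; the same bound holds for the expectations since $\|\nabla \E(M_n(\theta))\| = \|\E(\nabla M_n(\theta))\| \le C_{d,2}$ by Condition \ref{cond:moment} and Condition \ref{cond:smoothness} (differentiating under the expectation), and $C_{d,2}\epsilon \le K\epsilon$ provided $K \ge C_{d,2}$ (absorb into $C'_{M,1}$). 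Therefore, on $\mathcal{A}_K$,
\[
\sup_{\theta \in \Theta} \left| M_n(\theta) - \E(M_n(\theta)) \right|
\le \max_{i \le N} \left| M_n(\theta_i) - \E(M_n(\theta_i)) \right| + 2K\epsilon,
\]
and with $\epsilon = t/(pK)$ the slack term $2K\epsilon = 2t/p \le t/2$ for $p \ge 4$ — or, to avoid a dimension restriction, simply take $\epsilon = t/(6K)$ so the slack is $t/3 \le t/2$; then
\[
\mathbb{P}\!\left( \sup_{\theta \in \Theta} | M_n(\theta) - \E(M_n(\theta)) | \ge t \right)
\le \mathbb{P}(\mathcal{A}_K^c) + \sum_{i=1}^{N} \mathbb{P}\!\left( | M_n(\theta_i) - \E(M_n(\theta_i)) | \ge t/2 \right).
\]

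Finally I would bound each piece: $\mathbb{P}(\mathcal{A}_K^c) \le C_{d,1} n \exp(-c_{d,1} K)$ by the first inequality of Condition \ref{cond:large:dev} (valid since $K \ge C'_{d,1}$, absorbed into $C'_{M,1}$); and each term in the sum is at most $C_M \exp(-n c_M t^2/4)$ by the first inequality of Condition \ref{cond:concentration} (valid since $t/2 \le c'_M$, ensured by taking $c'_{M,1}$ small), so the sum is at most $C_\Theta (6K/t)^p \, C_M \exp(-n c_M t^2 /4)$. Collecting constants — writing $c_{M,1} = \min(c_{d,1}, c_M/4)$, and $C_{M,1}$ large enough to absorb $C_\Theta 6^p C_M$ and $C_{d,1}$ — gives exactly the claimed bound $C_{M,1} (K/t)^p \exp(-n c_{M,1} t^2) + C_{M,1} n \exp(-c_{M,1} K)$. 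The only mildly delicate point is the bookkeeping on the lower thresholds $c'_{M,1}, C'_{M,1}$ so that all invoked conditions apply simultaneously and so that the choice of $\epsilon$ is admissible in Condition \ref{cond:Theta:covering}; there is no real analytic obstacle, the argument is the routine "covering + uniform Lipschitz control + union bound" scheme.
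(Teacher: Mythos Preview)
Your proposal is correct and follows essentially the same covering-plus-Lipschitz-control argument as the paper: pick a net of radius proportional to $t/K$ via Condition~\ref{cond:Theta:covering}, control the oscillation of $M_n$ and $\E(M_n)$ along segments in $\ITheta$ by the gradient bounds (Conditions~\ref{cond:smoothness}, \ref{cond:large:dev}, \ref{cond:moment}), then apply a union bound over the net together with the pointwise concentration of Condition~\ref{cond:concentration}. The only cosmetic difference is that the paper splits the probability into three terms (one of which vanishes for $K$ large since $\nabla\E(M_n)$ is deterministically bounded), whereas you work directly on the event $\mathcal{A}_K$ and its complement; the constants differ accordingly but the structure is the same.
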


\begin{proof}[Proof of Lemma \ref{lemma:uniform:concentration:Mn}]
	From Condition \ref{cond:Theta:covering}, and with $c'_M$ and $C'_{d,1}$ from  Conditions \ref{cond:concentration} and \ref{cond:large:dev}, there exists a constant $C_{\Theta,2}$ such that for $ 0 < r \leq c'_M / 2 C'_{d,1}$, there exists $N \leq C_{\Theta,2} r^{-p} $ and $ S_{r} = \{ \theta_1 , \ldots , \theta_N \} \subseteq \Theta$ such that for each $\theta \in \Theta$, there exists $i \in \{1 , \ldots , N\}$ such that $( \theta , \theta_i ) \subseteq \Int{\Theta} $ and $||\theta - \theta_i|| \leq r$. We then have, for each $K \geq C'_{d,1}$, $ 0 < t \leq c'_M$, using the mean value theorem,
	\begin{align} 
		\mathbb{P} \left( \sup_{\theta \in \Theta} 
		\left|
		M_n(\theta) - \mathbb{E}( M_n( \theta ) )
		\right|
		\geq t
		\right)
		& \leq 
		\mathbb{P} \left( \max_{\theta \in S_{t/2K}} 
		\left|
		M_n(\theta) - \mathbb{E}( M_n( \theta ) )
		\right|
		\geq \frac{t}{2}
		\right)
		\notag
		\\
		& +
		\mathbb{P} \left( \sup_{\theta \in \ITheta} 
		\left|  \left|
		\nabla M_n(\theta)
		\right| \right|
		\geq \frac{K}{2}
		\right)
		+
		\mathbb{P} \left( \sup_{\theta \in \ITheta} 
		\left|  \left|
		\nabla \mathbb{E}( M_n(\theta))
		\right| \right|
		\geq \frac{K}{2}
		\right).
		\notag
		\end{align}
	Hence, because $\nabla \mathbb{E}( M_n(\theta)) = \mathbb{E}( \nabla  M_n(\theta))$ is bounded from Conditions \ref{cond:smoothness} and \ref{cond:moment}, and using a union bound,  there is a constant $C'_{d,1} \leq C_1 < \infty$	such that when $K \geq C_1$, $0 < t \leq c'_M$, we obtain 
	\begin{align} \label{eq:for:bounding:deviation:En}
		\mathbb{P} \left( \sup_{\theta \in \Theta} 
	\left|
	M_n(\theta) - \mathbb{E}( M_n( \theta ) )
	\right|
	\geq t
	\right)	& \leq 
		\frac{C_{\Theta,2} 2^p K^p}{t^p}
		\max_{\theta \in \Theta}
		\mathbb{P} \left( 
		\left|
		M_n(\theta) - \mathbb{E}( M_n( \theta ) )
		\right|
		\geq \frac{t}{2}
		\right)
		\\
		& +
		\mathbb{P} \left( \sup_{\theta \in \Theta} 
		\left|  \left|
		\nabla M_n(\theta)
		\right| \right|
		\geq \frac{K}{2}
		\right).  \notag
	\end{align}
	
Hence, using Conditions \ref{cond:concentration} and \ref{cond:large:dev},
we obtain, for  $ 0 < t \leq c'_M$ and $K \geq C_1$, 
	\begin{equation*} 
		\mathbb{P} \left( \sup_{\theta \in \Theta} 
		\left|
		M_n(\theta) - \mathbb{E}( M_n( \theta ) )
		\right|
		\geq t
		\right)
		\leq 
		C_{\Theta,2} 2^p \frac{K^p}{t^p}
		C_M
		\exp( - n c_M t^2/4 )
		+
		C_{d,1} n \exp(- c_{d,1} K / 2  ).
	\end{equation*}
This concludes the proof.
\end{proof}

\begin{lemma} \label{lemma:uniform:concentration:nabla}
	Assume that Conditions \ref{cond:Theta:covering} to \ref{cond:moment} hold.
	Then there are constants $0 < c_{\nabla,1} < \infty$, $0 < c'_{\nabla,1} < \infty$, $0 < C_{\nabla,1} < \infty$ and $0 < C'_{\nabla,1} < \infty$ such that, for $0 < t \leq c'_{\nabla,1}$ and $K \geq C'_{\nabla,1}$, 
	\begin{equation*} 
		\mathbb{P} \left( \sup_{\theta \in \Theta} 
		\left| \left|
		\nabla M_n(\theta) - \mathbb{E}( \nabla M_n( \theta ) )
		\right|
		\right|
		\geq t
		\right)
		\leq 
		C_{\nabla,1} \frac{K^p}{t^p}
		\exp( - n c_{\nabla,1} t^2 )
		+
		C_{\nabla,1} n \exp(- c_{\nabla,1} K  ).
	\end{equation*}
\end{lemma}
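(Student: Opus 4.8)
The plan is to mimic exactly the chaining-and-union-bound argument already used in the proof of Lemma \ref{lemma:uniform:concentration:Mn}, but now applied to the gradient $\nabla M_n$ rather than to $M_n$ itself, using the second derivatives (the Hessian) in place of the gradient as the Lipschitz control. First I would invoke Condition \ref{cond:Theta:covering}: for a radius $r$ small enough (say $0 < r \leq c'_M / 2C'_{d,1}$, keeping the same constraints as before so that the points land in $\ITheta$), there is a finite net $S_r = \{\theta_1,\ldots,\theta_N\}\subseteq \Theta$ with $N \leq C_{\Theta,2} r^{-p}$ such that every $\theta \in \Theta$ has a net point $\theta_i$ with $(\theta,\theta_i)\subseteq \ITheta$ and $\|\theta-\theta_i\|\leq r$.

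Next, for $\theta$ and its net point $\theta_i$, the mean value theorem applied coordinatewise to $\nabla M_n$ gives
\[
\left\| \nabla M_n(\theta) - \nabla M_n(\theta_i) \right\|
\leq
\sqrt{p}\,\|\theta - \theta_i\| \sup_{\theta' \in \ITheta} \max_{j,k=1}^p \left| \frac{\partial^2 M_n(\theta')}{\partial \theta_j \partial \theta_k} \right|,
\]
and the same bound holds for $\nabla \Esp(M_n)$ because by Condition \ref{cond:smoothness} the expected Hessian equals the Hessian of the expectation, and it is bounded by Condition \ref{cond:moment}. Choosing $r = t / 2K$ and splitting the event $\{\sup_\theta \|\nabla M_n(\theta) - \Esp(\nabla M_n(\theta))\| \geq t\}$ into the event that the max over the net $S_{t/2K}$ exceeds $t/2$, plus the event that $\sup_{\theta\in\ITheta}\max_{j,k}|\partial^2 M_n(\theta)/\partial\theta_j\partial\theta_k|$ exceeds $K/2\sqrt p$ (which, together with the deterministic bound on the expected Hessian, covers the remainder provided $K$ is at least some constant $C_2 \geq C'_{d,1}$), I get, by a union bound and the cardinality estimate $N \leq C_{\Theta,2}(2K/t)^p$,
\[
\mathbb{P}\!\left( \sup_{\theta\in\Theta} \left\| \nabla M_n(\theta) - \Esp(\nabla M_n(\theta)) \right\| \geq t \right)
\leq
C_{\Theta,2} \frac{(2K)^p}{t^p} \sup_{\theta \in \ITheta} \mathbb{P}\!\left( \left\| \nabla M_n(\theta) - \Esp(\nabla M_n(\theta)) \right\| \geq \frac{t}{2} \right)
+ \mathbb{P}\!\left( \sup_{\theta\in\ITheta}\max_{j,k=1}^p \left| \frac{\partial^2 M_n(\theta)}{\partial\theta_j\partial\theta_k} \right| \geq \frac{K}{4\sqrt p} \right).
\]

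Finally I apply the second inequality of Condition \ref{cond:concentration} to the pointwise term, yielding $C_M \exp(-n c_M t^2/4)$, and the second inequality of Condition \ref{cond:large:dev} to the Hessian-supremum term, yielding $C_{d,1} n \exp(-c_{d,1} K /(4\sqrt p))$. Collecting constants (take $c_{\nabla,1}$ to be a small enough multiple of $\min(c_M, c_{d,1})$, $C_{\nabla,1}$ a large enough multiple of $\max(C_{\Theta,2}2^p, C_M, C_{d,1})$, $c'_{\nabla,1} = c'_M$, $C'_{\nabla,1} = C_2$) gives exactly the claimed bound. I do not anticipate a genuine obstacle here: the argument is essentially identical to that of Lemma \ref{lemma:uniform:concentration:Mn}, and the only points requiring a little care are bookkeeping of the dimensional $\sqrt p$ factors arising from passing between the operator-type bound on the Hessian and the Euclidean norm of the gradient increment, and making sure the threshold $K$ is taken large enough that the deterministic bound on $\Esp(\nabla^2 M_n)$ is absorbed.
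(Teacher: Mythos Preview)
Your proposal is correct and follows exactly the approach the paper intends: the paper's own proof of this lemma is simply the one-line statement that ``the proof is identical to that of Lemma \ref{lemma:uniform:concentration:Mn},'' and you have carried out precisely that transcription, replacing $M_n$ by $\nabla M_n$, the gradient bound by the Hessian bound, and invoking the second inequalities of Conditions \ref{cond:concentration} and \ref{cond:large:dev} in place of the first. The only extra content in your write-up is the harmless bookkeeping of the $\sqrt{p}$ factors, which is absorbed into the final constants just as you indicate.
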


\begin{proof}[Proof of Lemma \ref{lemma:uniform:concentration:nabla}]
The proof is identical to that of Lemma \ref{lemma:uniform:concentration:Mn}.
\end{proof}

\begin{lemma} \label{lemma:proba:estimation:error}
	Assume that Conditions \ref{cond:Theta:covering} to \ref{cond:moment} and \ref{cond:global:identifiability} hold.
	For any $r >0$, there are constants $0 < c_{\hat{\theta},r} < \infty$ and $0 < C_{\hat{\theta},r} < \infty$ such that
	\[
	\mathbb{P}(  ||  \hat{\theta}_n - \theta_{0,n}  || \geq r )	
	\leq 
	C_{\hat{\theta},r}
	n
	\exp(- c_{\hat{\theta},r}  n^{1/4}  ).
	\]
\end{lemma}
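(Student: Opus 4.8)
The plan is to run the classical M-estimation consistency argument, but quantified: the event $\{\|\hat\theta_n-\theta_{0,n}\|\ge r\}$ forces $M_n$ to be smaller (or equal) at some point far from $\theta_{0,n}$ than at $\theta_{0,n}$ itself, which in turn forces $M_n$ to deviate from its expectation by an amount of order $c_r$ somewhere on $\Theta$; then Lemma \ref{lemma:uniform:concentration:Mn} controls the probability of such a uniform deviation. More precisely, first fix $r>0$; if $\Theta\setminus B(\theta_{0,n},r)=\varnothing$ the statement is trivial, so assume not. By Condition \ref{cond:global:identifiability} there exist $N_r$ and $c_r>0$ with $\inf_{\|\theta-\theta_{0,n}\|\ge r}\bigl(\Esp(M_n(\theta))-\Esp(M_n(\theta_{0,n}))\bigr)\ge c_r$ for $n\ge N_r$. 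On the event $E_n:=\{\|\hat\theta_n-\theta_{0,n}\|\ge r\}$, since $M_n(\hat\theta_n)\le M_n(\theta_{0,n})$ by \eqref{eq:hat:theta:general}, I would write
\[
c_r \le \Esp(M_n(\hat\theta_n)) - \Esp(M_n(\theta_{0,n}))
= \bigl(\Esp(M_n(\hat\theta_n)) - M_n(\hat\theta_n)\bigr) + \bigl(M_n(\hat\theta_n) - M_n(\theta_{0,n})\bigr) + \bigl(M_n(\theta_{0,n}) - \Esp(M_n(\theta_{0,n}))\bigr),
\]
and the middle term is $\le 0$, so $c_r \le 2\sup_{\theta\in\Theta}|M_n(\theta)-\Esp(M_n(\theta))|$ on $E_n$. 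Hence for $n\ge N_r$,
\[
\mathbb{P}(E_n) \le \mathbb{P}\!\left(\sup_{\theta\in\Theta}|M_n(\theta)-\Esp(M_n(\theta))| \ge \tfrac{c_r}{2}\right).
\]

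Next I would invoke Lemma \ref{lemma:uniform:concentration:Mn}: there are constants $c_{M,1},c'_{M,1},C_{M,1},C'_{M,1}$ such that for $0<t\le c'_{M,1}$ and $K\ge C'_{M,1}$,
\[
\mathbb{P}\!\left(\sup_{\theta\in\Theta}|M_n(\theta)-\Esp(M_n(\theta))|\ge t\right)\le C_{M,1}\frac{K^p}{t^p}\exp(-nc_{M,1}t^2)+C_{M,1}n\exp(-c_{M,1}K).
\]
I apply this with $t=\min(c_r/2,\,c'_{M,1})$ (if $c_r/2>c'_{M,1}$ one simply uses monotonicity of the left-hand side in $t$ to bound the $c_r/2$ probability by the $c'_{M,1}$ one) and with the choice $K=K_n:=c_{M,1}t^2\, n^{3/4}$, which is $\ge C'_{M,1}$ for $n$ large, absorbing small $n$ into the constant $C_{\hat\theta,r}$. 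With this choice the first term becomes $C_{M,1}(c_{M,1}t^2)^p\,t^{-p}\,n^{3p/4}\exp(-c_{M,1}t^2 n)$, which decays faster than $n\exp(-c'n^{1/4})$, while the second term is $C_{M,1}\,n\exp(-c_{M,1}^2 t^2\, n^{3/4})$, also dominated by $n\exp(-c'n^{1/4})$. Combining and enlarging constants yields $\mathbb{P}(E_n)\le C_{\hat\theta,r}\,n\exp(-c_{\hat\theta,r}\,n^{1/4})$ for all $n\in\IN$ (the finitely many $n<\max(N_r,\text{threshold})$ are handled by taking $C_{\hat\theta,r}$ large enough that the bound exceeds $1$ there, since a probability is always $\le 1$).

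I do not expect a serious obstacle here; the argument is standard. The only point requiring mild care is the bookkeeping of the free parameter $K$ in Lemma \ref{lemma:uniform:concentration:Mn}: one must pick $K=K_n$ growing with $n$ so that the polynomial-times-Gaussian term stays summably small, yet not so fast that $n\exp(-c_{d,1}K/2)$-type terms reappear — the sub-polynomial exponent $n^{1/4}$ in the conclusion is precisely what remains after balancing $K_n$ against the $\exp(-nc_{M,1}t^2)$ factor (any exponent strictly below $1$ and above $0$ would do for this lemma; $n^{1/4}$ is chosen for later convenience). One should also note that $r$ is fixed, so $c_r$, $c'_{M,1}$, etc., are genuine constants and the final constants $c_{\hat\theta,r},C_{\hat\theta,r}$ are allowed to depend on $r$, as stated.
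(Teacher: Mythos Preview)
Your proposal is correct and follows essentially the same approach as the paper's proof: reduce the event $\{\|\hat\theta_n-\theta_{0,n}\|\ge r\}$ to a uniform deviation of $M_n$ from its mean by at least a constant, then invoke Lemma~\ref{lemma:uniform:concentration:Mn}. The only cosmetic difference is the choice of the free parameter $K$: the paper takes $K=n^{1/4}$ (so the second term of Lemma~\ref{lemma:uniform:concentration:Mn} directly yields the $n\exp(-c\,n^{1/4})$ rate), whereas you take $K\propto n^{3/4}$, obtain a stronger intermediate bound, and then weaken it to the stated form---both choices work, and your closing remark that any power in $(0,1)$ would do here is spot on.
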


\begin{proof}[Proof of Lemma \ref{lemma:proba:estimation:error}]
	The event $||  \hat{\theta}_n - \theta_{0,n}  || \geq r$ implies 
	\[
	\inf_{ \substack{\theta \in \Theta \\ ||\theta - \theta_{0,n}|| \geq  r } }
	\left(
	M_{n}(\theta)
	-
	M_n(\theta_{0,n})
	\right) 
	\leq 
	0.
	\]
From Condition \ref{cond:global:identifiability} and the triangle inequality, this implies, with a constant $0 < c_1 < \infty$, for $n$ large enough,
\[
\sup_{\theta \in \Theta}
\left|
M_n( \theta ) - \Esp( M_n(\theta) )
\right|
\geq 
c_1. 
\]

Hence 
\[
	\mathbb{P}(  ||  \hat{\theta}_n - \theta_{0,n}  || \geq r )	
\leq 
\mathbb{P} 
\left(
\sup_{\theta \in \Theta}
\left|
M_n( \theta ) - \Esp( M_n(\theta) )
\right|
\geq 
c_1
\right).
\]
Using now Lemma \ref{lemma:uniform:concentration:Mn} with $K = n^{1/4}$ and $n$ large enough, we obtain, for some constants $0< c_2 < \infty$, $0 < C_2 < \infty$, $0< c_3 < \infty$ and $0 < C_3 < \infty$, for $n$ large enough,
\begin{equation*}
	\mathbb{P}(  ||  \hat{\theta}_n - \theta_{0,n}  || \geq r )	
	 \leq 
	C_{2} n^{p/4}
	\exp( - n c_2  )
	+
	C_2 n \exp(- c_2 n^{1/4}  )
 \leq 
	C_3
	n
	\exp(- c_3 n^{1/4}  ). 
\end{equation*}
\end{proof}

\begin{lemma} \label{lemma:Hessian:lower:bound}
	Assume that Conditions  \ref{cond:smoothness}, \ref{cond:moment} and \ref{cond:local:identifiability} hold.
There exist constants $0 < c_{\nabla^2,1} < \infty$, $0 < c'_{\nabla^2,1} < \infty$ and $N_{\nabla^2,1} \in \IN$ such that for $n \geq N_{\nabla^2,1} $ 
\[
\inf_{ \substack{\theta \in \ITheta \\ ||  \theta - \theta_{0,n} || \leq c'_{\nabla^2,1} } }
\lambda_p(  \mathbb{E}( \nabla^2 M_n(\theta) ) )
\geq c_{\nabla^2,1}.
\] 	
\end{lemma}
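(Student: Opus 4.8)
The plan is to upgrade the pointwise spectral bound of Condition \ref{cond:local:identifiability} at $\theta_{0,n}$ to a uniform bound over a small ball around $\theta_{0,n}$, by controlling how much the expected Hessian can move as $\theta$ varies, using the third-derivative moment bound from Condition \ref{cond:moment}. Concretely, for $\theta \in \ITheta$ and $i,j \in \{1,\ldots,p\}$, the function $\theta \mapsto \partial^2 \Esp(M_n(\theta))/\partial\theta_i\partial\theta_j$ is differentiable (by Condition \ref{cond:smoothness}, which lets us commute expectation and the third derivative), and its gradient in $\theta$ has entries $\partial^3 \Esp(M_n(\theta))/\partial\theta_i\partial\theta_j\partial\theta_\ell = \Esp(\partial^3 M_n(\theta)/\partial\theta_i\partial\theta_j\partial\theta_\ell)$. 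By \eqref{eq:cond:moments:trois} and Jensen (or Cauchy--Schwarz), $|\Esp(\partial^3 M_n(\theta)/\partial\theta_i\partial\theta_j\partial\theta_\ell)| \leq (\Esp(\sup_{\theta \in \ITheta}|\partial^3 M_n(\theta)/\partial\theta_i\partial\theta_j\partial\theta_\ell|^2))^{1/2} \leq C_{d,2}^{1/2}$, so each entry of $\bar H_{n,0}$, as a function of $\theta$, is Lipschitz on $\ITheta$ with a constant depending only on $C_{d,2}$ (and $p$), uniformly in $n$.

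From there, I would argue as follows. Write $H_n(\theta) = \Esp(\nabla^2 M_n(\theta))$, so $H_n(\theta_{0,n}) = \bar H_{n,0}$. The entrywise Lipschitz bound gives, for $\|\theta - \theta_{0,n}\| \leq c'$, an operator-norm bound $\|H_n(\theta) - \bar H_{n,0}\|_{\mathrm{op}} \leq p \cdot p^{1/2} C_{d,2}^{1/2} \|\theta - \theta_{0,n}\|$ (crudely bounding the operator norm by the Frobenius norm and then by $p$ times the max entry, which in turn is at most $p^{1/2}C_{d,2}^{1/2}$ times the distance — the exact constant is irrelevant). By Weyl's inequality, $\lambda_p(H_n(\theta)) \geq \lambda_p(\bar H_{n,0}) - \|H_n(\theta) - \bar H_{n,0}\|_{\mathrm{op}}$. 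For $n \geq N_{\theta_0,H}$, Condition \ref{cond:local:identifiability} gives $\lambda_p(\bar H_{n,0}) \geq c_{\theta_0,H}$, so choosing $c'_{\nabla^2,1}$ small enough that $p^{3/2} C_{d,2}^{1/2} c'_{\nabla^2,1} \leq c_{\theta_0,H}/2$ yields $\lambda_p(H_n(\theta)) \geq c_{\theta_0,H}/2$ for all such $\theta$; I would also shrink $c'_{\nabla^2,1}$ to at most $c_{\theta_0}$ so that $B(\theta_{0,n}, c'_{\nabla^2,1}) \subseteq \ITheta$ via Condition \ref{cond:global:identifiability}. Taking $c_{\nabla^2,1} = c_{\theta_0,H}/2$ and $N_{\nabla^2,1} = N_{\theta_0,H}$ finishes the proof.

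The only mildly delicate point is the justification that $\theta \mapsto \bar H_{n,0}(\theta) := \Esp(\nabla^2 M_n(\theta))$ is genuinely differentiable with gradient given by the expected third derivative — but this is exactly what Condition \ref{cond:smoothness} asserts (the third of its commutation identities), so it is available off the shelf. Everything else is a one-line application of Weyl's inequality plus a mean value / fundamental theorem of calculus estimate along the segment $[\theta_{0,n}, \theta]$, which lies in $\ITheta$ once $c'_{\nabla^2,1} \leq c_{\theta_0}$. I do not anticipate a real obstacle here; if anything, the bookkeeping of turning the entrywise third-derivative bound into an operator-norm Lipschitz constant is the fussiest part, and it is deliberately handled with loose constants since only existence of the constants matters.
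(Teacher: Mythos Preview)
Your proposal is correct and follows essentially the same approach as the paper's proof, which is a two-sentence sketch: Condition~\ref{cond:smoothness} lets one commute expectation and differentiation, Condition~\ref{cond:moment} then bounds the derivatives of $\Esp(\nabla^2 M_n)$ uniformly in $\theta$, and Condition~\ref{cond:local:identifiability} gives the pointwise lower bound at $\theta_{0,n}$; the uniform bound on a small ball follows by continuity. Your write-up simply makes explicit what the paper leaves implicit (the Lipschitz constant via the third-derivative bound, Weyl's inequality for the eigenvalue perturbation, and the use of $c_{\theta_0}$ from Condition~\ref{cond:global:identifiability} to keep the segment in $\ITheta$).
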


\begin{proof}[Proof of Lemma \ref{lemma:Hessian:lower:bound}]
Condition \ref{cond:moment}, together with the fact that we can exchange derivatives and expectation for $M_n$ (Condition \ref{cond:smoothness}) imply that the derivatives of $\Esp( \nabla^2 M_n )$ are bounded uniformly in $\theta \in \ITheta$. Hence, from Condition \ref{cond:local:identifiability}, we can conclude the proof. 
\end{proof}

\begin{lemma} \label{lemma:nabla:Mn:larger:than}
		Assume that Conditions  \ref{cond:smoothness}, \ref{cond:moment},  \ref{cond:global:identifiability} and \ref{cond:local:identifiability} hold.
	There are constants $0 < c_{\nabla,2} < \infty$, $0 < c'_{\nabla,2} <   \infty$ and $N_{\nabla,2} \in \IN$ such that for $n \geq N_{\nabla,2}$, for $|| \theta - \theta_{0,n} || \leq c'_{\nabla,2}$,
\[
	||
	\mathbb{E}(\nabla M_n(\theta))
	||
	\geq 
	c_{\nabla,2} || \theta - \theta_{0,n} ||.
\]
\end{lemma}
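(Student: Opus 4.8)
The plan is to run a Taylor expansion of $\theta \mapsto \Esp(\nabla M_n(\theta))$ around $\theta_{0,n}$ at first order. By Condition \ref{cond:global:identifiability} we have $\Esp(\nabla M_n(\theta_{0,n})) = 0$, so for $\theta$ with $\|\theta - \theta_{0,n}\|$ small enough that the segment $[\theta_{0,n},\theta]$ lies in $\ITheta$ (possible since $B(\theta_{0,n},c_{\theta_0}) \subseteq \ITheta$ by Condition \ref{cond:global:identifiability}), one can write, componentwise and using that expectation and differentiation commute up to order three (Condition \ref{cond:smoothness}),
\[
\Esp(\nabla M_n(\theta)) = \left( \int_0^1 \Esp(\nabla^2 M_n(\theta_{0,n} + s(\theta - \theta_{0,n}))) \, ds \right) (\theta - \theta_{0,n}).
\]
Call the averaged matrix $A_n(\theta)$. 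The goal then reduces to showing $\lambda_p(A_n(\theta)) \geq c_{\nabla,2} > 0$ uniformly for $n$ large and $\|\theta - \theta_{0,n}\|$ small.

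The key step is to compare $A_n(\theta)$ with $\bar H_{n,0} = \Esp(\nabla^2 M_n(\theta_{0,n}))$, whose smallest eigenvalue is at least $c_{\theta_0,H}$ for $n \geq N_{\theta_0,H}$ by Condition \ref{cond:local:identifiability} — or, more conveniently, to invoke Lemma \ref{lemma:Hessian:lower:bound} directly, which already gives $\lambda_p(\Esp(\nabla^2 M_n(\theta'))) \geq c_{\nabla^2,1}$ for all $\theta'$ in a fixed ball $\|\theta' - \theta_{0,n}\| \leq c'_{\nabla^2,1}$ and $n \geq N_{\nabla^2,1}$. Since $A_n(\theta)$ is a convex combination (an integral average) of such Hessians $\Esp(\nabla^2 M_n(\theta_{0,n} + s(\theta - \theta_{0,n})))$ over $s \in [0,1]$, and the set of symmetric matrices with $\lambda_p \geq c_{\nabla^2,1}$ is convex, we immediately get $\lambda_p(A_n(\theta)) \geq c_{\nabla^2,1}$ as soon as $\|\theta - \theta_{0,n}\| \leq c'_{\nabla^2,1}$. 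It then follows that $\|\Esp(\nabla M_n(\theta))\| = \|A_n(\theta)(\theta - \theta_{0,n})\| \geq \lambda_p(A_n(\theta)) \|\theta - \theta_{0,n}\| \geq c_{\nabla^2,1}\|\theta - \theta_{0,n}\|$, so one takes $c_{\nabla,2} = c_{\nabla^2,1}$, $c'_{\nabla,2} = \min(c'_{\nabla^2,1}, c_{\theta_0})$ and $N_{\nabla,2} = N_{\nabla^2,1}$.

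The only mild technical point — not really an obstacle — is justifying the integral form of the first-order Taylor remainder for the random function and passing the expectation inside: this needs the integrand $s \mapsto \nabla^2 M_n(\theta_{0,n}+s(\theta-\theta_{0,n}))$ to be almost surely well-defined and integrable on the segment and the Fubini swap between $\Esp$ and $\int_0^1 ds$ to be valid. Both follow from Condition \ref{cond:smoothness} (three-times differentiability on $\ITheta$ and summability of the second derivatives, which also lets expectation commute with the first derivative) together with the uniform moment bound \eqref{eq:cond:moments:un:deux} on $\partial^2 M_n / \partial \theta_i \partial \theta_j$ in Condition \ref{cond:moment}; one may equivalently differentiate the scalar map $s \mapsto \Esp(\partial M_n/\partial\theta_j(\theta_{0,n}+s(\theta-\theta_{0,n})))$, which is $C^1$ with derivative $\sum_k (\theta-\theta_{0,n})_k \Esp(\partial^2 M_n/\partial\theta_j\partial\theta_k)$ by the same conditions, and integrate. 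This is exactly the kind of routine interchange already used in the proof of Lemma \ref{lemma:Hessian:lower:bound}, so I expect it to be dispatched in a sentence.
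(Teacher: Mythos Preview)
Your proof is correct and follows essentially the same route as the paper: both invoke Lemma \ref{lemma:Hessian:lower:bound} to get the uniform Hessian lower bound on the segment, combine it with $\Esp(\nabla M_n(\theta_{0,n}))=0$ from Condition \ref{cond:global:identifiability}, and read off the gradient norm lower bound. The paper packages this via the strong-convexity inequality $(\nabla\Esp M_n(\theta)-\nabla\Esp M_n(\theta_{0,n}))^\top(\theta-\theta_{0,n})\geq c_{\nabla^2,1}\|\theta-\theta_{0,n}\|^2$ followed by Cauchy--Schwarz, whereas you use the integral Taylor form and $\lambda_p$ of the averaged Hessian; these are equivalent one-line reformulations.
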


\begin{proof}[Proof of Lemma \ref{lemma:nabla:Mn:larger:than}]
Using Lemma \ref{lemma:Hessian:lower:bound} and $\mathbb{E}( \nabla^2 M_n(\theta) ) = \nabla^2 \mathbb{E}(  M_n(\theta) ) $ (Condition \ref{cond:smoothness}), we have, for $|| \theta - \theta_{0,n} || \leq c'_{\nabla^2,1}$ and for $n$ large enough,
\[
||
\nabla
\mathbb{E}(M_n(\theta))
-
\nabla
\mathbb{E}( M_n(\theta_{0,n}))
||
~
||\theta - \theta_{0,n}||
\geq 
\left(
\nabla
\mathbb{E}( M_n(\theta))
-
\nabla
\mathbb{E}( M_n(\theta_{0,n}))
\right)^\top
(\theta - \theta_{0,n})
\geq
c_{\nabla^2,1} || \theta - \theta_{0,n} ||^2.
\]
From Conditions \ref{cond:smoothness} and \ref{cond:global:identifiability},
\[
\nabla \mathbb{E}( M_n(\theta_{0,n})) = 0.
\] 
Hence we have,  for $|| \theta - \theta_{0,n} || \leq c'_{\nabla^2,1}$ and for $n$ large enough,
\[
||
\nabla
\mathbb{E}( M_n(\theta))
||
\geq 
c_{\nabla^2,1} || \theta - \theta_{0,n} ||.
\]
We conclude from Condition \ref{cond:smoothness}.
\end{proof}

\begin{lemma} \label{lemma:proba:estimation:error:and:zero:gradient} 
	Assume that Conditions \ref{cond:Theta:covering} to \ref{cond:moment}, \ref{cond:global:identifiability} and \ref{cond:local:identifiability} hold. Recall $c_{\theta_0}$ from Condition \ref{cond:global:identifiability}.
For any constant $\gamma_1>0$, there are constants $0 < c_{\nabla,\hat{\theta},1} < \infty$, $0 < c'_{\nabla,\hat{\theta},1} \leq c_{\theta_0}$, $0 < C_{\nabla,\hat{\theta},1} < \infty$ and $N_{\nabla,\hat{\theta},1} \in \IN$ such that for $n \geq N_{\nabla,\hat{\theta},1}$ and $t \leq c'_{\nabla,\hat{\theta},1}$, 
\begin{align*}
	\mathbb{P}
	\left(
	\nabla M_n( \hat{\theta}_n ) = 0
	,
t \leq 	||  \hat{\theta}_n - \theta_{0,n} ||
\leq c_{\theta_0}
	\right)
	\leq &
	C_{\nabla,\hat{\theta},1} 
	\frac{\log(n)^{ p \gamma_1 }}{t^p} 
	\exp( - n c_{\nabla,\hat{\theta},1} t^2  )
	\\
	& +
	C_{\nabla,\hat{\theta},1}  n
	\exp( - c_{\nabla,\hat{\theta},1} (\log n)^{\gamma_1}  )
	+ C_{\nabla,\hat{\theta},1}   n \exp( - c_{\nabla,\hat{\theta},1} n^{1/4} ).
\end{align*}
\end{lemma}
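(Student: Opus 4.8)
The plan is to split the event according to whether $\hat{\theta}_n$ lies close enough to $\theta_{0,n}$ for the local lower bound of Lemma~\ref{lemma:nabla:Mn:larger:than} to be available. Write $c'_{\nabla,2}, c_{\nabla,2}, N_{\nabla,2}$ for the constants of that lemma, $c'_{\nabla,1}, C'_{\nabla,1}$ for those of Lemma~\ref{lemma:uniform:concentration:nabla}, and set $c'_{\nabla,\hat{\theta},1} = \min\{ c'_{\nabla,2},\, c_{\theta_0},\, c'_{\nabla,1}/c_{\nabla,2} \}$. For $t \leq c'_{\nabla,\hat{\theta},1}$ I would use the inclusion
\[
\Big\{ \nabla M_n(\hat{\theta}_n) = 0,\ t \leq ||\hat{\theta}_n - \theta_{0,n}|| \leq c_{\theta_0} \Big\} \subseteq E_1 \cup E_2,
\]
where $E_1$ is the same event with the extra requirement $||\hat{\theta}_n - \theta_{0,n}|| \leq c'_{\nabla,2}$, and $E_2 = \{ ||\hat{\theta}_n - \theta_{0,n}|| > c'_{\nabla,2} \}$.

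On $E_2$ I would discard all the other constraints and invoke the consistency estimate of Lemma~\ref{lemma:proba:estimation:error} with $r = c'_{\nabla,2}$, which gives $\mathbb{P}(E_2) \leq C n \exp(-c n^{1/4})$, i.e. the third term in the statement.

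On $E_1$, since $||\hat{\theta}_n - \theta_{0,n}|| \leq c'_{\nabla,2} \leq c_{\theta_0}$ and $B(\theta_{0,n}, c_{\theta_0}) \subseteq \ITheta$ by Condition~\ref{cond:global:identifiability}, the point $\hat{\theta}_n$ lies in $\ITheta$, so $\nabla M_n(\hat{\theta}_n) = 0$ is meaningful there. Combining this with Lemma~\ref{lemma:nabla:Mn:larger:than} (applicable for $n \geq N_{\nabla,2}$ because $||\hat{\theta}_n - \theta_{0,n}|| \leq c'_{\nabla,2}$) yields, on $E_1$,
\[
\big|\big| \nabla M_n(\hat{\theta}_n) - \mathbb{E}(\nabla M_n(\hat{\theta}_n)) \big|\big| = \big|\big| \mathbb{E}(\nabla M_n(\hat{\theta}_n)) \big|\big| \geq c_{\nabla,2} ||\hat{\theta}_n - \theta_{0,n}|| \geq c_{\nabla,2}\, t,
\]
hence $E_1 \subseteq \{ \sup_{\theta \in \ITheta} ||\nabla M_n(\theta) - \mathbb{E}(\nabla M_n(\theta))|| \geq c_{\nabla,2} t \}$. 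I would then apply Lemma~\ref{lemma:uniform:concentration:nabla} with the threshold $c_{\nabla,2} t$ (admissible since $c_{\nabla,2} t \leq c'_{\nabla,1}$ by the choice of $c'_{\nabla,\hat{\theta},1}$) and the choice $K = (\log n)^{\gamma_1}$ (admissible once $n$ is large enough that $(\log n)^{\gamma_1} \geq C'_{\nabla,1}$), which gives
\[
\mathbb{P}(E_1) \leq C_{\nabla,1} \frac{(\log n)^{p\gamma_1}}{(c_{\nabla,2} t)^p} \exp\!\big( -n c_{\nabla,1} c_{\nabla,2}^2 t^2 \big) + C_{\nabla,1}\, n \exp\!\big( -c_{\nabla,1} (\log n)^{\gamma_1} \big).
\]
Absorbing the numerical powers of $c_{\nabla,2}$ into the constants, this is exactly the first two terms; summing the bounds for $E_1$ and $E_2$ and relabelling the constants then finishes the proof.

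I expect the only point requiring genuine care to be the case split at $c'_{\nabla,2}$: Lemma~\ref{lemma:nabla:Mn:larger:than} only lower-bounds $||\mathbb{E}(\nabla M_n(\theta))||$ in a fixed neighbourhood of $\theta_{0,n}$, so the complementary range $||\hat{\theta}_n - \theta_{0,n}|| > c'_{\nabla,2}$ cannot be treated by that argument and must instead be folded into the much smaller ($n^{1/4}$-rate) consistency bound of Lemma~\ref{lemma:proba:estimation:error} — which is precisely where the third term of the statement comes from. Everything else is constant bookkeeping.
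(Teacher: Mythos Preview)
Your proposal is correct and follows essentially the same route as the paper: split according to whether $\|\hat{\theta}_n-\theta_{0,n}\|\leq c'_{\nabla,2}$, handle the far part via Lemma~\ref{lemma:proba:estimation:error}, and on the near part use Lemma~\ref{lemma:nabla:Mn:larger:than} to convert $\nabla M_n(\hat{\theta}_n)=0$ into a uniform deviation of $\nabla M_n$ of size $c_{\nabla,2}t$, then apply Lemma~\ref{lemma:uniform:concentration:nabla} with $K=(\log n)^{\gamma_1}$. Your bookkeeping on the admissibility constraint $c_{\nabla,2}t\leq c'_{\nabla,1}$ is in fact slightly more careful than the paper's.
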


\begin{proof}[Proof of Lemma \ref{lemma:proba:estimation:error:and:zero:gradient}]
		Recall $c'_{\nabla,2}$ from Lemma \ref{lemma:nabla:Mn:larger:than}.
		For $0 < t < c'_{\nabla,2}$, we have, using Lemmas \ref{lemma:proba:estimation:error} and \ref{lemma:nabla:Mn:larger:than},
		\begin{align} \label{eq:for:bounding:proba:deviation:hat:theta}
			&\mathbb{P}
			\left(
			\nabla M_n( \hat{\theta}_n ) = 0
			,
		t \leq 	||  \hat{\theta}_n - \theta_{0,n} ||
		\leq c_{\theta_0}
			\right)
			\leq 
			\mathbb{P}
			\left(
			\inf_{ \theta \in B(\theta_{0,n} , c'_{\nabla,2}) \backslash B(\theta_{0,n} , t) }
			|| \nabla M_n(\theta ) || = 0
			\right)
			+ \mathbb{P}
			\left(
			|| \hat{\theta}_n - \theta_{0,n} || \geq c'_{\nabla,2}
			\right)
			\notag
			\\
			& \hspace{1cm} \leq 
			\mathbb{P}
			\left(
			\sup_{ \theta \in \ITheta }
			|| \nabla M_n(\theta ) - \mathbb{E} ( \nabla M_n(\theta ) ) || 
			\geq 
			c_{\nabla,2} t
			\right) +
			C_{\hat{\theta},c'_{\nabla,2}}
		n
		\exp(- c_{\hat{\theta},c'_{\nabla,2}}  n^{1/4}  ). 
		\end{align}
		For any constant $0 < \gamma_1 < \infty$, we can now use Lemma \ref{lemma:uniform:concentration:nabla} with $K = (\log n)^{\gamma_1}$ to obtain, 
		for $0 < t < \min( c'_{\nabla,2} , c'_{\nabla,1} )$,
		for $n$ large enough,
		\begin{align} \label{eq:bound:proba:estimation:error:aut}
			&\mathbb{P}
			\left(
			\nabla M_n( \hat{\theta}_n ) = 0
			,
			t \leq 	||  \hat{\theta}_n - \theta_{0,n} ||
		\leq c_{\theta_0}
			\right)
			 \leq 
			C_{\nabla,1}
			\frac{\log(n)^{ p \gamma_1 }}{c_{\nabla,2}^p t^p} 
			\exp( - n c_{\nabla,1} c_{\nabla,2}^2 t^2  )
			\\
			& \hspace{1cm} +
			C_{\nabla,1} n
			\exp( - 	c_{\nabla,1} (\log n)^{\gamma_1}  )
			+ 	C_{\hat{\theta},c'_{\nabla,2}}
			n
			\exp(- c_{\hat{\theta},c'_{\nabla,2}}  n^{1/4}  ).
		\end{align}
This concludes the proof.
\end{proof}

\begin{proof}[Proof of Theorem \ref{theorem:general:bound}]
	
	From the triangle inequality, we have
	\begin{align} \label{eq:basic:bound:Wasserstein}
		\mathcal{W}_1
		\left(
		\bar{C}_{n,0}^{-1/2} \bar{H}_{n,0}  \sqrt{n} (\hat{\theta}_n - \theta_{0,n}) 
		,
		Z 
		\right)
		\leq &
		\mathcal{W}_1
		\left( 
		\bar{C}_{n,0}^{-1/2} \bar{H}_{n,0}  \sqrt{n} (\hat{\theta}_n - \theta_{0,n}) 
		,
		- \bar{C}_{n,0}^{-1/2}
		\sqrt{n} \nabla M_n (\theta_{0,n})
		\right) 
		\notag
		\\
		& +
		\mathcal{W}_1
		\left(
		- \bar{C}_{n,0}^{-1/2}
		\sqrt{n} \nabla M_n (\theta_{0,n}) , Z 
		\right)
		\notag 
		\\
		=:  & W_1 + W_2.
	\end{align}

Observe first that
\begin{equation} \label{eq:w2:equals}
W_2 = 
\mathcal{W}_1
\left(
- \bar{C}_{n,0}^{-1/2}
\sqrt{n} \nabla M_n (\theta_{0,n}) , Z 
\right)
=
\mathcal{W}_1
\left(
- \bar{C}_{n,0}^{-1/2}
\sqrt{n} \nabla M_n (\theta_{0,n}) , -Z 
\right)
=
\mathcal{W}_1
\left(
 \bar{C}_{n,0}^{-1/2}
\sqrt{n} \nabla M_n (\theta_{0,n}) , Z 
\right).
\end{equation}
Hence, it is sufficient to bound $W_1 = \mathcal{W}_1
\left( 
\bar{C}_{n,0}^{-1/2} \bar{H}_{n,0}  \sqrt{n} (\hat{\theta}_n - \theta_{0,n}) 
,
- \bar{C}_{n,0}^{-1/2}
\sqrt{n} \nabla M_n (\theta_{0,n})
\right)$, which we now do. 
We have
\begin{align} \label{eq:using:lipschitzness}
	W_1
	= &
	\sup_{f \in \mathcal{L}_1} 
	\left|
	\mathbb{E} \left(
	f \left(
	\bar{C}_{n,0}^{-1/2} \bar{H}_{n,0} \sqrt{n}  (\hat{\theta}_n - \theta_{0,n})
	\right)
	\right)
	-
	\mathbb{E} \left(  f \left(
	-  \bar{C}_{n,0}^{-1/2} \sqrt{n} \nabla M_n (\theta_{0,n})
	\right)
	\right)
	\right| \notag
	\\
	\leq &
	\sup_{f \in \mathcal{L}_1} 
	\mathbb{E} \left(
	\left|
	f \left(
	\bar{C}_{n,0}^{-1/2} \bar{H}_{n,0} \sqrt{n}  (\hat{\theta}_n - \theta_{0,n})
	\right)
	-
	f \left(
	- \bar{C}_{n,0}^{-1/2} \sqrt{n} \nabla M_n (\theta_{0,n})
	\right)
	\right|
	\right) \notag
	\\
	& \leq 
	\mathbb{E} \left(
	\left| \left|
	\bar{C}_{n,0}^{-1/2} \bar{H}_{n,0} \sqrt{n}  (\hat{\theta}_n - \theta_{0,n})
	+  \bar{C}_{n,0}^{-1/2} \sqrt{n} \nabla M_n (\theta_{0,n})
	\right|
	\right|
	\right).
\end{align}
With $c_{\theta_0}$ as in Condition \ref{cond:global:identifiability},
observe that if $\hat{\theta}_n \in B( \theta_{0,n} , c_{\theta,0} )$ then $\nabla M_n( \hat{\theta}_n ) = 0$.
Hence, applying H\"older inequality, we obtain, 
\begin{align} \label{eq:introducing:Woneone:Wonetwo}
	W_1
	\leq &
	\mathbb{E} 
	\left(
	\left| \left|
	\bar{C}_{n,0}^{-1/2} \bar{H}_{n,0} \sqrt{n}  (\hat{\theta}_n - \theta_{0,n})
	+  \bar{C}_{n,0}^{-1/2} \sqrt{n} \nabla M_n (\theta_{0,n})
	\right|
	\right|^2
	\right)^{1/2}
	\mathbb{P} \left(  \hat{\theta}_n \not \in B( \theta_{0,n} , c_{\theta,0} ) \right)^{1/2}
	\notag \\
	& + 
	\mathbb{E} \left(
	\mathds{1}_{\{  \nabla M_n( \hat{\theta}_n ) = 0 \}}
		\mathds{1}_{\{  \hat{\theta}_n \in B( \theta_{0,n} , c_{\theta,0} ) \}}
	\left| \left|
	\bar{C}_{n,0}^{-1/2} \bar{H}_{n,0} \sqrt{n}  (\hat{\theta}_n - \theta_{0,n})
	+  \bar{C}_{n,0}^{-1/2} \sqrt{n} \nabla M_n (\theta_{0,n})
	\right|
	\right|
	\right) \notag \\
	= &  \mathbb{E}(W_{1,1})^{1/2} \mathbb{P}(A_{1,1})^{1/2}+ \mathbb{E}(W_{1,2}),
\end{align}
where we define
\[
W_{1,1} = 	
\left| \left|
\bar{C}_{n,0}^{-1/2} \bar{H}_{n,0} \sqrt{n}  (\hat{\theta}_n - \theta_{0,n})
+  \bar{C}_{n,0}^{-1/2} \sqrt{n} \nabla M_n (\theta_{0,n})
\right|
\right|^2, \hspace{3mm}
A_{1,1} = \left\{   \hat{\theta}_n \not \in B( \theta_{0,n} , c_{\theta,0})   \right\}
\]
and
\[
W_{1,2} = 
	\mathds{1}_{\{  \nabla M_n( \hat{\theta}_n ) = 0 \}}
	\mathds{1}_{\{  \hat{\theta}_n \in B( \theta_{0,n} , c_{\theta,0} ) \}}
\left| \left|
\bar{C}_{n,0}^{-1/2} \bar{H}_{n,0} \sqrt{n}  (\hat{\theta}_n - \theta_{0,n})
+  \bar{C}_{n,0}^{-1/2} \sqrt{n} \nabla M_n (\theta_{0,n})
\right|
\right|.
\]
Let us first bound $\mathbb{E}(W_{1,1})^{1/2} \mathbb{P}(A_{1,1})^{1/2}$. In $W_{1,1}$, $\bar{C}_{n,0}^{-1/2}$ is bounded from Condition \ref{cond:smallest:eigenvalue:covariance:nabla} and $\bar{H}_{n,0}$ is bounded from Condition \ref{cond:moment}. Furthermore, $\sqrt{n} \nabla M_n (\theta_{0,n})$ has mean zero from Condition \ref{cond:global:identifiability} and has bounded covariance matrix from Condition \ref{cond:var}.  Hence, since $\Theta$ is compact, with constants $0 < C_1 < \infty$ and $N_1 \in \IN$, we have for $n \geq N_1$,
\begin{equation*}
	\mathbb{E}(W_{1,1})^{1/2}
	\leq 
	C_1 \sqrt{n}.
\end{equation*}
Then Lemma \ref{lemma:proba:estimation:error} directly provides, for some constant $0<c_2<\infty$, $0 < C_2 < \infty$ and $N_2 \in \IN$, for $n \geq N_2$, 
\begin{equation*} 
\mathbb{P}(A_{1,1})^{1/2}
\leq 
C_2
\sqrt{n}
\exp(- c_2  n^{1/4}   ).
\end{equation*}

Hence, eventually, for some constants $0 < c_3 < \infty$, $0 < C_3 < \infty$ and $N_3 \in \IN$, for $n \geq N_3$,
	\begin{align} \label{eq:bound:Woneone}
		\mathbb{E}(W_{1,1})^{1/2} \mathbb{P}(A_{1,1})^{1/2}
		\leq 
		C_{3}
		n
		\exp(- c_{3} n^{1/4}  ).
	\end{align}
	
	Let us now bound $\mathbb{E}(W_{1,2})$. 
	When $\nabla M_n( \hat{\theta}_n ) = 0$ and $\hat{\theta}_n \in B( \theta_{0,n} , c_{\theta,0} )$, we have, since $ B( \theta_{0,n} , c_{\theta,0} ) \subset \ITheta$,
	\[
	0
	=
	\nabla M_n(\theta_{0,n})
	+
	\nabla^2 M_n( \tilde{\theta}_1, \ldots , \tilde{\theta}_p )
	(\hat{\theta}_n - \theta_{0,n}),
	\]
	where $\tilde{\theta}_1 , \ldots , \tilde{\theta}_p$ are on the segment between $\hat{\theta}_n$ and $\theta_{0,n}$ and where $\nabla^2 M_n( \tilde{\theta}_1, \ldots , \tilde{\theta}_p )$ is $p \times p$ with line $k$ equal to the line $k$ of  $\nabla^2 M_n( \tilde{\theta}_k )$ for $k \in \{1 , \ldots , p \}$. This yields, when $\nabla M_n( \hat{\theta}_n ) = 0$ and $\hat{\theta}_n \in B( \theta_{0,n} , c_{\theta,0} )$,
	\begin{equation} \label{eq:Wonetwo:equals}
		\bar{H}_{n,0}  \sqrt{n} (\hat{\theta}_n - \theta_{0,n})
		+   \sqrt{n} \nabla M_n (\theta_{0,n})
		=
		\sqrt{n} 
		\left(
		\mathbb{E}(\nabla^2 M_n( \theta_{0,n} ))
		-
		\nabla^2 M_n( \tilde{\theta}_1, \ldots , \tilde{\theta}_p )
		\right)
		(\hat{\theta}_n - \theta_{0,n}).
	\end{equation}
	Using Condition \ref{cond:smallest:eigenvalue:covariance:nabla}, we obtain, when $\nabla M_n( \hat{\theta}_n ) = 0$ and $\hat{\theta}_n \in B( \theta_{0,n} , c_{\theta,0} )$, for $n \geq N_{\theta_{0},\nabla}$,
	\begin{align*} 
		& W_{1,2}
		\notag
		\\
		& \leq
        \frac{1}{\sqrt{c_{\theta_0,\nabla}}}
		\sqrt{n} 
		\left| \left|
		\left(
		\mathbb{E}(\nabla^2 M_n( \theta_{0,n} ))
		-
		\nabla^2 M_n( \tilde{\theta}_1, \ldots , \tilde{\theta}_p )
		\right)
		(\hat{\theta}_n - \theta_{0,n})
		\right|
		\right|
		\notag
		\\
		& \leq 
	 \frac{1}{\sqrt{c_{\theta_0,\nabla}}}
		\sqrt{n} 
		\rho_1
		\left( 
		\mathbb{E}(\nabla^2 M_n( \theta_{0,n} ))
		-
		\nabla^2 M_n( \tilde{\theta}_1, \ldots , \tilde{\theta}_p ) 
		\right)
		|| \hat{\theta}_n - \theta_{0,n} ||
		\\
		& \leq 
		C_{4}
		\sqrt{n}
		\max_{j,k=1}^p
		\left|
		\mathbb{E}(\nabla^2 M_n( \theta_{0,n} ))_{j,k}
		-
		\nabla^2 M_n( \theta_{0,n} )_{j,k}
		\right|
		|| \hat{\theta}_n - \theta_{0,n} ||
		\\
		& 
		+
		C_4
		\sqrt{n}
		\max_{j,k,\ell=1}^p
		\sup_{\theta \in \ITheta}
		\left|
		\frac{\partial^3 M_n(\theta)}{ \partial \theta_j  \partial \theta_k \partial \theta_\ell }
		\right|
		|| \hat{\theta}_n - \theta_{0,n} ||^2,
	\end{align*}
where, in the last inequality, $0 < C_4 < \infty$ is a constant and we have used the mean value theorem. 
Using H\"older inequality together with Conditions \ref{cond:moment} and \ref{cond:var}, we obtain, for some constants $0 < C_5 < \infty$, $0 < C_6 < \infty$ and $N_5 \in \IN$, for $n \geq N_5$,
	\begin{align*}
		\mathbb{E}(W_{1,2})
		 \leq &
		C_{5} \sqrt{n} 
		\max_{j,k,=1}^p
		\mathrm{Var}( \nabla^2 M_n( \theta_{0,n} )_{j,k} )^{1/2}
		\mathbb{E} 
		\left( \mathds{1}_{ \{ \nabla M_n( \hat{\theta}_n ) = 0\} } 
	 \mathds{1}_{ \{ \hat{\theta}_n \in B( \theta_{0,n} , c_{\theta,0} ) \} } 
		|| \hat{\theta}_n - \theta_{0,n} ||^2 
		\right)^{1/2}
		\notag
		\\
		& +
		C_5
		\sqrt{n}
		\max_{j,k,\ell=1}^p
		\mathbb{E} \left(
\sup_{\theta \in \ITheta}
		\left|
		\frac{\partial^3 M_n(\theta)}{ \partial \theta_j \partial \theta_k \partial \theta_\ell }
		\right|^2
		\right)^{1/2}
		\mathbb{E}\left( 
		\mathds{1}_{ \{ \nabla M_n( \hat{\theta}_n ) = 0\} }
			 \mathds{1}_{ \{ \hat{\theta}_n \in B( \theta_{0,n} , c_{\theta,0} ) \} } 
		|| \hat{\theta}_n - \theta_{0,n} ||^4 
		\right)^{1/2}
		\notag
		\\
		 \leq &
		C_{6} \mathbb{E}
		\left( 
		\mathds{1}_{ \{ \nabla M_n( \hat{\theta}_n ) = 0\} }
			 \mathds{1}_{ \{ \hat{\theta}_n \in B( \theta_{0,n} , c_{\theta,0} ) \} } 
		|| \hat{\theta}_n - \theta_{0,n} ||^2
		\right)^{1/2}
		\\
& 		+
		C_{6} \sqrt{n}
		\mathbb{E} \left( 
		\mathds{1}_{ \{ \nabla M_n( \hat{\theta}_n ) = 0\} }
			 \mathds{1}_{ \{ \hat{\theta}_n \in B( \theta_{0,n} , c_{\theta,0} ) \} } 
		|| \hat{\theta}_n - \theta_{0,n} ||^4 
		\right)^{1/2}.
	\end{align*}
	
	We now apply Lemma \ref{lemma:proba:estimation:error:and:zero:gradient} with a constant $\gamma_1>0$ to be chosen later.  We obtain, with some constants $0 < c_7 < \infty$, $0 < c'_7 < \infty$, $0 < C_7 < \infty$ and $N_7 \in \IN$, for $n \geq N_7$ and $0 < t \leq c'_7$, 
	\begin{align} \label{eq:bound:proba:estimation:error}
	\mathbb{P}
\left(
\nabla M_n( \hat{\theta}_n ) = 0
,
||  \hat{\theta}_n - \theta_{0,n} ||
\geq t,
	\hat{\theta}_n \in B( \theta_{0,n} , c_{\theta,0} ) 
\right)
&\leq 
C_7
\frac{\log(n)^{ p \gamma_1 }}{t^p} 
\exp( - n c_7 t^2  )
\\
& +
C_7  n
\exp( - c_7 (\log n)^{\gamma_1}  )
+ C_7  n \exp( - c_7 n^{1/4} ). \notag
	\end{align}
	Hence, using $\mathbb{E}(X) \leq A + X_{\max} \mathbb{P}(X \geq A) $ for a non-negative random variable $X$ bounded by $X_{\max} >0$ and for $A>0$, we obtain, for a constant $0 < \gamma_2 < \infty$ to be chosen later, for a constant $0 < C_8 < \infty$, for $n \geq N_7$,
	\begin{align*}
		\mathbb{E}( W_{1,2} )
		\leq &
		\sqrt{ 
			C_8\frac{ \log(n)^{2 \gamma_2} }{ n }
			+
			C_8
			\mathbb{P}
			\left(
			\nabla M_n( \hat{\theta}_n ) = 0
			,
			||  \hat{\theta}_n  -  \theta_{0,n} ||
			\geq 
			\frac{ \log(n)^{\gamma_2} }{ \sqrt{n} }
			,	\hat{\theta}_n \in B( \theta_{0,n} , c_{\theta,0} ) 
			\right)
		}
		\\
		& +
		\sqrt{ 
			C_{8} n \frac{ \log(n)^{4 \gamma_2} }{ n^2 }
			+
			C_{8}  n
			\mathbb{P}
			\left(
			\nabla M_n( \hat{\theta}_n ) = 0
			,
			||  \hat{\theta}_n  -  \theta_{0,n} ||
			\geq 
			\frac{ \log(n)^{\gamma_2} }{ \sqrt{n} }
			,	\hat{\theta}_n \in B( \theta_{0,n} , c_{\theta,0} ) 
			\right)
		}.
	\end{align*}
	
	Hence from \eqref{eq:bound:proba:estimation:error},  for a constant $N_{8} \in \IN$ that may depend on $\gamma_1$ and $\gamma_2$, for $n  \geq N_8$,	
	\begin{align} \label{eq:bound:Wonetwo}
		& \mathbb{E}(W_{1,2})
		\leq \notag \\
		&
		\sqrt{C_8}
		\sqrt{
			\frac{(\log n)^{2 \gamma_2}}{ n }
			+
			C_7 
			\log(n)^{p \gamma_1}
			\frac{n^{p/2} 	\exp( -  c_7 (\log n)^{2 \gamma_2}  )}{\log(n)^{p \gamma_2}}
			+
			C_7  n
			\exp( - c_7 (\log n)^{\gamma_1}  )
			+ C_7  n \exp( -c_7 n^{1/4})
		}
		\notag
		\\
		& + 
			\sqrt{C_8}  \notag \\
			&
		\sqrt{
			\frac{(\log n)^{4 \gamma_2}}{ n }
			+
			C_7 
			\log(n)^{p \gamma_1}
			\frac{n^{p/2+1} \exp( -  c_7 (\log n)^{2 \gamma_2}  )}{\log(n)^{p \gamma_2}}
			+
			C_7 n^{2}
			\exp( - c_7 (\log n)^{\gamma_1}  )
			+ C_7  n^2 \exp( -c_7 n^{1/4})
		}.
	\end{align}

	Hence from \eqref{eq:basic:bound:Wasserstein}, \eqref{eq:w2:equals}, \eqref{eq:introducing:Woneone:Wonetwo}, \eqref{eq:bound:Woneone} and \eqref{eq:bound:Wonetwo}, choosing $\gamma_1$ and $\gamma_2$ as large enough constants, we obtain, for constants
	 $0 < \gamma_3 < \infty$, $0 < C_9 < \infty$, $N_9 \in \IN$, for $n \geq N_9$,
	\begin{equation*}
		 \mathcal{W}_1 \left(
		\bar{C}_{n,0}^{-1/2} \bar{H}_{n,0}
		\sqrt{n}  (\hat{\theta}_n - \theta_{0,n}) 
		, 
		Z  
		\right) \leq 
		\mathcal{W}_1 \left(   \bar{C}_{n,0}^{-1/2}
		\sqrt{n} \nabla M_n (\theta_{0,n}) , Z  \right)
		+C_9 \frac{(\log n)^{\gamma_3}}{ \sqrt{n} } . 
	\end{equation*}
This concludes the proof.
	
\end{proof}

\section{Proofs for Section \ref{subsection:logistic}}

\begin{proof}[Proof of Theorem \ref{theorem:logistic:bound}]
As stated previously, the function $M_n$ is given by
$$M_n(\theta) = \frac{1}{n}\sum_{i=1}^n \left( -y_ix_i^T\theta + \log(1 + \exp(x_i^T\theta)) \right)$$
where the $y_i$ are independent random variables with values in $\{0,1\}$. Defining $X_i = (x_i , y_i)$, we are in the framework of Theorem \ref{thm_classical_m_bnd}, so let us check that the required conditions indeed hold. 

It can be checked that there is a constant $0 < C_1 < \infty$ such that for any $Y$ as in \eqref{eq:Y:subgaussian}, $Y$ is almost surely bounded by $C_1$ (observe that $Y$ only takes two values). Hence the assumption \eqref{eq:Y:subgaussian} of sub-Gaussianity and bounded expectation holds.

Condition \ref{cond:Theta:covering} is already assumed to hold.
Condition \ref{cond:smoothness} can be shown simply.  
Let us show that Condition \ref{cond:global:identifiability} holds. Indeed, $ \nabla \Esp( M_n(\theta_0)) = 0$ can be seen directly from \eqref{eq:gradient:logistic}. Furthermore, from \eqref{eq:hessian:logistic}, we have, for $\theta \in \ITheta$,
\[
\nabla^2 \Esp( M_n(\theta) ) 
= 
\nabla^2  M_n(\theta)  
	 = 
\frac{1}{n}
\sum_{i=1}^n
\frac{e^{x_i^\top \theta}}{(1 + e^{x_i^\top \theta})^2} x_i x_i^\top.
\]
Hence, from Conditions \ref{cond:bounded:x:logistic} and \ref{cond:lambda:inf:cov:x:logistic}, there are constants $N_2 \in \IN$ and $0 < c_2 < \infty$ such that for $n \geq N_2$ and $\theta \in \ITheta$,
\begin{equation} \label{eq:lambda:inf:Hessian:logistic}
	\lambda_p \left( \nabla^2  M_n(\theta)   \right) \geq c_2. 
\end{equation} 
Hence, since $ \nabla \Esp( M_n(\theta_0)) = 0$, by strong convexity, Condition \ref{cond:global:identifiability} holds. 

Condition \ref{cond:local:identifiability} is a consequence of \eqref{eq:lambda:inf:Hessian:logistic}. Condition \ref{cond:smallest:eigenvalue:covariance:nabla} holds because $\Cov( \sqrt{n} \nabla M_n(\theta_0)) = \nabla^2 M_n(\theta_0)$ (this holds because we have a well-specified likelihood model and can also be checked directly).  

Finally, since all the quantities involved are uniformly bounded, option $1$ for checking Condition 4 holds. The second option could also be used instead, since the functions involved are all uniformly globally Lipschitz. 

	Hence Theorem \ref{thm_classical_m_bnd} can be applied, which concludes the proof. 
\end{proof}

\section{Proofs for Section \ref{subsection:cross:validation}}

\begin{lemma} \label{lemma:largest:eigenvalue:R}
		Assume that Conditions \ref{cond:minimal:distance} and \ref{cond:cross:validation:smoothness:and:bounds} hold. 
	There is a constant $C_R$ such that for $n \in \IN$,
		\begin{equation*}
		\sup_{\theta \in \Theta}
		\rho_1 \left( R_{n,\theta} \right)
		\leq 
		C_R.
	\end{equation*}
\end{lemma}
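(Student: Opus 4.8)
The plan is to bound the spectral norm of the symmetric positive semidefinite matrix $R_{n,\theta}$ by its largest absolute row sum, and then to control that row sum uniformly in $n$, in $\theta$ and in the row index by combining the polynomial decay \eqref{eq:bound:R} with a packing estimate for the observation points coming from Condition \ref{cond:minimal:distance}. Concretely, I would first use that for a symmetric matrix $A$ one has $\rho_1(A) \leq \max_i \sum_j |A_{ij}|$ (Gershgorin's theorem, or interpolation of the $\ell^1$ and $\ell^\infty$ operator norms together with $\|A\|_{1\to1} = \|A\|_{\infty\to\infty}$ for symmetric $A$). Since the $(i,j)$ entry of $R_{n,\theta}$ is $k_\theta(x_i - x_j)$, it then suffices to find a constant depending only on $c_x$, $C_{R,2}$, $c_{R,2}$ and $d$ with
\[
\sup_{\theta \in \Theta} \ \max_{1 \leq i \leq n} \ \sum_{j=1}^n \bigl| k_\theta(x_i - x_j) \bigr| \leq C_R
\]
for all $n \in \IN$, the supremum over $\theta$ being harmless since \eqref{eq:bound:R} is uniform in $\theta \in \Theta$.

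The term $j=i$ contributes at most $C_{R,2}$ by \eqref{eq:bound:R} with $x=0$. For the off-diagonal terms I would group the indices $j \neq i$ into shells $A_m = \{ j \neq i : m\, c_x \leq \|x_i - x_j\| < (m+1)\, c_x \}$, $m \geq 1$, which exhausts all $j \neq i$ since distinct observation points are at distance at least $c_x$. The only quantitative input is the cardinality of $A_m$: by Condition \ref{cond:minimal:distance} the balls $B(x_j , c_x/2)$ for $j \in A_m$ are pairwise disjoint and contained in the annulus $\{ y : (m - \tfrac12) c_x \leq \|y - x_i\| \leq (m + \tfrac32) c_x \}$, whose volume is of order $c_x^d [ (m+\tfrac32)^d - (m-\tfrac12)^d ] \leq C_d\, c_x^d\, m^{d-1}$ for $m \geq 1$ by the mean value theorem; comparing volumes with the common volume of the small balls gives $|A_m| \leq C'_d\, m^{d-1}$. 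On $A_m$ one has $\|x_i - x_j\| \geq m\, c_x$, so \eqref{eq:bound:R} yields $|k_\theta(x_i - x_j)| \leq C_{R,2}\,(1 + (m\, c_x)^{d+c_{R,2}})^{-1} \leq C''\, m^{-(d+c_{R,2})}$, and summing over shells,
\[
\sum_{j \neq i} \bigl| k_\theta(x_i - x_j) \bigr| \leq \sum_{m \geq 1} |A_m|\, C''\, m^{-(d+c_{R,2})} \leq C'_d C'' \sum_{m \geq 1} m^{-(1+c_{R,2})} < \infty ,
\]
the series converging precisely because $c_{R,2} > 0$.

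Adding the diagonal contribution gives the claimed bound with $C_R := C_{R,2} + C'_d C'' \sum_{m \geq 1} m^{-(1+c_{R,2})}$, and Lemma \ref{lemma:largest:eigenvalue:R} follows. There is no substantial obstacle here; the single point that requires care is to count $|A_m|$ against the volume of an annulus rather than a full ball, so that the exponent obtained is $m^{d-1}$ and the resulting series converges for every $c_{R,2} > 0$ rather than only for $c_{R,2} > 1$.
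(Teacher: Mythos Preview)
Your proof is correct and complete. The paper's own proof does not spell out any argument at all: it simply cites Lemma 4 in \cite{furrer2016asymptotic} together with \eqref{eq:bound:R}. That external lemma is precisely the row-sum bound you derive here---the operator-norm estimate $\rho_1(A) \leq \max_i \sum_j |A_{ij}|$ combined with a packing count of $c_x$-separated points against the decay $C_{R,2}/(1+\|x\|^{d+c_{R,2}})$---so your approach is essentially the same, just written out self-containedly rather than deferred to the reference. Your closing remark about using an annulus (yielding $|A_m| \lesssim m^{d-1}$) rather than a full ball (yielding only $m^d$) is the one genuinely important detail, and you handle it correctly.
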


\begin{proof}[Proof of Lemma \ref{lemma:largest:eigenvalue:R}]
The lemma follows from \eqref{eq:bound:R} and from Lemma 4 in \cite{furrer2016asymptotic}.
\end{proof}

\begin{lemma} \label{lemma:gradient:CV}
	Assume that Conditions \ref{cond:minimal:distance} to \ref{cond:cross:validation:smoothness:and:bounds} hold. 
Then, we have, for $j \in \{1 , \ldots,p \}$, $\theta \in \ITheta$ and $n \in \IN$,
\begin{align} \label{eq:gradient:CV}
	(\nabla M_n(\theta))_{j}
	=
	\frac{1}{n}
	y^{(n)\top}
	B_{n,\theta,j}
	y^{(n)}
\end{align}
with
\begin{equation} \label{eq:matrice:for:gradient}
	B_{n,\theta,j}
	=
	2 R_{n,\theta}^{-1} \mathrm{diag}(R_{n,\theta}^{-1})^{-2} \left( \mathrm{diag} \left( R_{n,\theta}^{-1} \frac{\partial R_{n,\theta}}{\partial \theta_j} R_{n,\theta}^{-1} \right) \mathrm{diag}(R_{n,\theta}^{-1})^{-1} - R_{n,\theta}^{-1} \frac{\partial R_{n,\theta}}{\partial \theta_j} \right) R_{n,\theta}^{-1}.
\end{equation}

 For a constant and $0 < C_B < \infty$, we have, for $n \in \IN$,
\begin{equation} \label{eq:matrice:for:gradient:bound:singular:value}
	\max_{j=1,\ldots,p}
	\sup_{\theta \in \ITheta}
	\rho_1 \left( B_{n,\theta,j} \right)
	\leq 
  C_B.
\end{equation}
\end{lemma}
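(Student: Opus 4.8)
The plan is to obtain \eqref{eq:gradient:CV}--\eqref{eq:matrice:for:gradient} by differentiating the closed form \eqref{eq:Mntheta:CV} in $\theta_j$, and \eqref{eq:matrice:for:gradient:bound:singular:value} by submultiplicativity of the spectral norm $\rho_1(\cdot)$ together with the eigenvalue bounds already in hand.

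For the formula, write $A_\theta := R_{n,\theta}^{-1}\,\mathrm{diag}(R_{n,\theta}^{-1})^{-2}\,R_{n,\theta}^{-1}$, so that $M_n(\theta) = \tfrac1n\, y^{(n)\top} A_\theta\, y^{(n)}$ and $(\nabla M_n(\theta))_j = \tfrac1n\, y^{(n)\top}(\partial A_\theta/\partial\theta_j)\,y^{(n)}$; here differentiability in $\theta$ on $\ITheta$ follows from Condition \ref{cond:cross:validation:smoothness:and:bounds} and invertibility of $R_{n,\theta}$ from Condition \ref{cond:smallest:eigen:value}. Using $\partial R_{n,\theta}^{-1}/\partial\theta_j = -R_{n,\theta}^{-1}(\partial R_{n,\theta}/\partial\theta_j)R_{n,\theta}^{-1}$ and, for the diagonal factor, $\partial_j\,\mathrm{diag}(R_{n,\theta}^{-1})^{-2} = 2\,\mathrm{diag}(R_{n,\theta}^{-1})^{-3}\,\mathrm{diag}\big(R_{n,\theta}^{-1}(\partial R_{n,\theta}/\partial\theta_j)R_{n,\theta}^{-1}\big)$, the product rule produces three terms. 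The two ``outer'' terms are transposes of one another (all matrices involved being symmetric), so inside the quadratic form $y^{(n)\top}(\cdot)\,y^{(n)}$ one may be replaced by the other; collecting terms and using that diagonal matrices commute yields precisely $B_{n,\theta,j}$ as in \eqref{eq:matrice:for:gradient}. Note that $B_{n,\theta,j}$ need not be symmetric, which is immaterial since only the scalar $y^{(n)\top}B_{n,\theta,j}\,y^{(n)}$ enters.

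For the uniform bound \eqref{eq:matrice:for:gradient:bound:singular:value}, Condition \ref{cond:smallest:eigen:value} gives $\rho_1(R_{n,\theta}^{-1}) = 1/\lambda_n(R_{n,\theta}) \leq 1/c_{R,1}$, while Lemma \ref{lemma:largest:eigenvalue:R} gives $\rho_1(R_{n,\theta}) \leq C_R$; consequently every diagonal entry of $R_{n,\theta}^{-1}$ lies in $[1/C_R,\,1/c_{R,1}]$, so $\mathrm{diag}(R_{n,\theta}^{-1})^{-1}$ and $\mathrm{diag}(R_{n,\theta}^{-1})^{-2}$ have spectral norm at most $C_R$ and $C_R^2$ respectively, uniformly in $n,\theta$. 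Next, the entries of $\partial R_{n,\theta}/\partial\theta_j$ are $\partial k_\theta(x_i-x_{i'})/\partial\theta_j$, which by \eqref{eq:bound:R:derivatives} are dominated by $C_{R,2}/(1+\|x_i-x_{i'}\|^{d+c_{R,2}})$; together with the minimal-separation Condition \ref{cond:minimal:distance} this gives $\sup_{\theta\in\ITheta}\rho_1(\partial R_{n,\theta}/\partial\theta_j) \leq C$ uniformly in $n$ and $j$, by exactly the argument used for Lemma \ref{lemma:largest:eigenvalue:R} (Lemma 4 in \cite{furrer2016asymptotic}). The remaining diagonal factor $\mathrm{diag}(R_{n,\theta}^{-1}(\partial R_{n,\theta}/\partial\theta_j)R_{n,\theta}^{-1})$ has entries bounded by $\rho_1(R_{n,\theta}^{-1})^2\,\rho_1(\partial R_{n,\theta}/\partial\theta_j) \leq C/c_{R,1}^2$, hence spectral norm $\leq C/c_{R,1}^2$. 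Inserting these estimates into \eqref{eq:matrice:for:gradient} and applying submultiplicativity of $\rho_1$ gives a bound independent of $n$, $\theta$ and $j$, i.e.\ \eqref{eq:matrice:for:gradient:bound:singular:value}.

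The computation is otherwise routine; the only points that require care are the bookkeeping in the differentiation (tracking which factors are diagonal and hence commute, and recognizing the two outer terms as transposes) and, for the second part, the uniform spectral-norm bound on $\partial R_{n,\theta}/\partial\theta_j$, which rests on the polynomial decay in \eqref{eq:bound:R:derivatives} and the lower bound $c_x$ on inter-point distances---exactly as in the proof of Lemma \ref{lemma:largest:eigenvalue:R}.
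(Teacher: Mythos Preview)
Your proposal is correct and follows essentially the same route as the paper, which simply cites \cite{bachoc14asymptotic,bachoc2020asymptotic} for the formula and points to Condition \ref{cond:smallest:eigen:value}, Lemma \ref{lemma:largest:eigenvalue:R}, \eqref{eq:bound:R:derivatives} and the arguments of Proposition D.7 in \cite{bachoc14asymptotic} for the singular-value bound. Your version is more self-contained: you carry out the product-rule computation explicitly (correctly identifying the two outer terms as mutual transposes and using commutativity of diagonal factors), and for \eqref{eq:matrice:for:gradient:bound:singular:value} you spell out the submultiplicativity argument together with the observation that the diagonal entries of $R_{n,\theta}^{-1}$ lie in $[1/C_R,\,1/c_{R,1}]$ --- exactly the ingredients the paper's citations rely on.
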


\begin{proof}[Proof of Lemma \ref{lemma:gradient:CV}]
The equation \eqref{eq:gradient:CV} is proved in \cite{bachoc14asymptotic,bachoc2020asymptotic}. 
The equation \eqref{eq:matrice:for:gradient:bound:singular:value} follows from Condition \ref{cond:smallest:eigen:value}, Lemma \ref{lemma:largest:eigenvalue:R} and \eqref{eq:bound:R:derivatives} and from the arguments in the proof of Proposition D.7 in \cite{bachoc14asymptotic}.
\end{proof}

\begin{lemma} \label{lemma:Hessian:CV}
		Assume that Conditions \ref{cond:minimal:distance} to \ref{cond:cross:validation:smoothness:and:bounds} hold. 
Then, we have, for $j,k \in \{1 , \ldots,p \}$, for $\theta \in \ITheta$, for $n \in \IN$,
\begin{align} \label{eq:hessian:CV}
	(\nabla^2 M_n(\theta))_{j,k}
	=
	\frac{1}{n}
	y^{(n)\top}
	C_{n,\theta,j,k}
	y^{(n)},
\end{align}
where the matrices $C_{n,\theta,j,k}$ satisfy, for a constant $0 < C_C < \infty$, for $n \in \IN$,
\begin{equation} \label{eq:matrice:for:hessian:bound:singular:value}
	\max_{j,k=1,\ldots,p}
	\sup_{\theta \in \ITheta}
	\rho_1 \left( C_{n,\theta,j,k} \right)
	\leq 
	C_C.
\end{equation}

\end{lemma}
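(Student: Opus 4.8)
The plan is to mirror the structure of the proof of Lemma \ref{lemma:gradient:CV}. First, starting from the explicit form \eqref{eq:Mntheta:CV}, namely $M_n(\theta) = \frac{1}{n} y^{(n)\top} A_{n,\theta} y^{(n)}$ with $A_{n,\theta} = R_{n,\theta}^{-1} \mathrm{diag}(R_{n,\theta}^{-1})^{-2} R_{n,\theta}^{-1}$, I would differentiate twice in $\theta$. Since $y^{(n)}$ does not depend on $\theta$, we immediately get $(\nabla^2 M_n(\theta))_{j,k} = \frac{1}{n} y^{(n)\top} C_{n,\theta,j,k} y^{(n)}$ with $C_{n,\theta,j,k} = \partial^2 A_{n,\theta}/\partial\theta_j \partial\theta_k$, which already establishes \eqref{eq:hessian:CV} (one can symmetrize if desired). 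So the only real content is the uniform singular value bound \eqref{eq:matrice:for:hessian:bound:singular:value}.

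For that bound, I would expand $C_{n,\theta,j,k}$ by the product and chain rules into a finite sum of terms, each of which is a product of finitely many factors drawn from the following list: $R_{n,\theta}^{-1}$, $\mathrm{diag}(R_{n,\theta}^{-1})^{-1}$, and first- and second-order $\theta$-derivatives of $R_{n,\theta}$ (obtained via the identity $\partial R_{n,\theta}^{-1} = - R_{n,\theta}^{-1} (\partial R_{n,\theta}) R_{n,\theta}^{-1}$, applied repeatedly, including to the $\mathrm{diag}(\cdot)^{-2}$ block after differentiating the diagonal entries). The key observation is that the operator norm of each of these building blocks is bounded uniformly in $n$ and $\theta \in \ITheta$: $\rho_1(R_{n,\theta}^{-1}) \le 1/c_{R,1}$ by Condition \ref{cond:smallest:eigen:value}; $\rho_1(\mathrm{diag}(R_{n,\theta}^{-1})^{-1})$ is bounded because each diagonal entry of $R_{n,\theta}^{-1}$ is at least $1/\rho_1(R_{n,\theta}) \ge 1/C_R$ by Lemma \ref{lemma:largest:eigenvalue:R}; and the operator norms of $\partial R_{n,\theta}/\partial\theta_j$ and $\partial^2 R_{n,\theta}/\partial\theta_j\partial\theta_k$ are bounded uniformly by the decay bound \eqref{eq:bound:R:derivatives} together with Condition \ref{cond:minimal:distance} and Lemma 4 in \cite{furrer2016asymptotic} (the same Schur/Gershgorin-type argument used in Lemma \ref{lemma:largest:eigenvalue:R}, since a matrix whose $(i,j)$ entry is bounded by $C_{R,2}/(1+\|x_i-x_j\|^{d+c_{R,2}})$ with points at pairwise distance $\ge c_x$ has uniformly bounded operator norm). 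Submultiplicativity of $\rho_1$ over each product, and finiteness of the number of terms, then yields \eqref{eq:matrice:for:hessian:bound:singular:value} with a constant $C_C$ depending only on $p$, $c_{R,1}$, $C_R$, $C_{R,2}$, $c_{R,2}$ and $c_x$. This is exactly the mechanism invoked in the proof of Proposition D.7 in \cite{bachoc14asymptotic}, so I would cite that proof for the bookkeeping.

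The main obstacle is purely organizational rather than mathematical: the second derivative of $A_{n,\theta}$ produces a fairly large number of terms, and in particular differentiating the $\mathrm{diag}(R_{n,\theta}^{-1})^{-2}$ factor twice requires care (differentiating the diagonal of $R_{n,\theta}^{-1}$, then the power $-2$, then applying the quotient/chain rule again). However, since we only need an upper bound on the operator norm and never the exact form of the matrices, it suffices to observe that every term is a product of uniformly-bounded building blocks; no cancellation is needed. Thus I would present the argument at the level of ``expand, bound each factor, multiply,'' referencing \eqref{eq:bound:R:derivatives}, Condition \ref{cond:smallest:eigen:value}, Lemma \ref{lemma:largest:eigenvalue:R}, and \cite{bachoc14asymptotic,furrer2016asymptotic}, exactly as \eqref{eq:matrice:for:gradient:bound:singular:value} was handled in Lemma \ref{lemma:gradient:CV}.
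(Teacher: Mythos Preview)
Your proposal is correct and follows essentially the same approach as the paper: the paper simply cites \cite{bachoc14asymptotic} for the quadratic-form representation \eqref{eq:hessian:CV}, notes that the matrices $C_{n,\theta,j,k}$ are built from $R_{n,\theta}$, $R_{n,\theta}^{-1}$, $\partial R_{n,\theta}/\partial\theta_j$, $\partial R_{n,\theta}/\partial\theta_k$, $\partial^2 R_{n,\theta}/\partial\theta_j\partial\theta_k$ via sums, products and the $\mathrm{diag}$ operator, and then states that \eqref{eq:matrice:for:hessian:bound:singular:value} is obtained exactly as \eqref{eq:matrice:for:gradient:bound:singular:value}. Your write-up makes the underlying bookkeeping (bounding each factor and using submultiplicativity of $\rho_1$) more explicit, but the argument is the same.
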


\begin{proof}[Proof of Lemma \ref{lemma:Hessian:CV}]
Equation \eqref{eq:hessian:CV} is shown in \cite{bachoc14asymptotic},
where the matrices $C_{n,\theta,j,k}$ are obtained from the matrices $R_{n,\theta}$, $R_{n,\theta}^{-1}$, $\partial R_{n,\theta} / \partial \theta_j$, $\partial R_{n,\theta} / \partial \theta_k$ and 
	$
	\partial^2 R_{n,\theta} / \partial \theta_k \partial \theta_j = \partial^2 R_{n,\theta} / \partial \theta_j \partial \theta_k
	$,
	from sums and products and from the $\mathrm{diag}$ operator. The precise expressions of the matrices $C_{n,\theta,j,k}$ can be found in \cite{bachoc14asymptotic}.
Equation
\eqref{eq:matrice:for:hessian:bound:singular:value}
is then shown similarly to \eqref{eq:matrice:for:gradient:bound:singular:value}. 
\end{proof}

\begin{lemma} \label{lemma:third:dev:CV}
	Assume that Conditions \ref{cond:minimal:distance} to \ref{cond:cross:validation:smoothness:and:bounds} hold. 
 Then, for $j,k,\ell \in \{1 , \ldots,p \}$, for $\theta \in \ITheta$, for $n \in \IN$, we have
\begin{align} \label{eq:third:derivatives:CV}
	\frac{\partial^3 M_n(\theta)}{ \partial \theta_j \partial \theta_k \partial \theta_\ell }
	=
	\frac{1}{n}
	y^{(n)\top}
	D_{n,\theta,j,k,\ell}
	y^{(n)},
\end{align}
where the matrices $D_{n,\theta,j,k,\ell}$ satisfy, for some constant $0 < C_D < \infty$, for $n \in \IN$,
\begin{equation} \label{eq:matrice:for:third:der:bound:singular:value}
	\max_{j,k,\ell=1,\ldots,p}
	\sup_{\theta \in \ITheta}
	\rho_1 \left( D_{n,\theta,j,k,\ell} \right)
	\leq 
	C_D.
\end{equation}
\end{lemma}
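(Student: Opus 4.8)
The plan is to follow exactly the route used for Lemmas~\ref{lemma:gradient:CV} and~\ref{lemma:Hessian:CV}. Write $A_{n,\theta} := R_{n,\theta}^{-1} \mathrm{diag}(R_{n,\theta}^{-1})^{-2} R_{n,\theta}^{-1}$, so that by~\eqref{eq:Mntheta:CV} we have $M_n(\theta) = \frac{1}{n} y^{(n)\top} A_{n,\theta} y^{(n)}$. Since $y^{(n)}$ does not depend on $\theta$, and by Condition~\ref{cond:cross:validation:smoothness:and:bounds} the map $\theta \mapsto R_{n,\theta}$ is three times continuously differentiable on $\ITheta$ — with $R_{n,\theta}$ invertible by Condition~\ref{cond:smallest:eigen:value}, hence $\theta \mapsto R_{n,\theta}^{-1}$ and $\theta \mapsto \mathrm{diag}(R_{n,\theta}^{-1})^{-1}$ three times continuously differentiable as well — the map $\theta \mapsto A_{n,\theta}$ is three times continuously differentiable, and
\[
\frac{\partial^3 M_n(\theta)}{\partial \theta_j \partial \theta_k \partial \theta_\ell} = \frac{1}{n} y^{(n)\top} D_{n,\theta,j,k,\ell} y^{(n)}, \qquad D_{n,\theta,j,k,\ell} := \frac{\partial^3 A_{n,\theta}}{\partial \theta_j \partial \theta_k \partial \theta_\ell},
\]
which gives~\eqref{eq:third:derivatives:CV}. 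Carrying out the differentiation with the product rule and the identities $\partial R_{n,\theta}^{-1}/\partial \theta_j = - R_{n,\theta}^{-1} (\partial R_{n,\theta}/\partial \theta_j) R_{n,\theta}^{-1}$ and $\partial \mathrm{diag}(M)^{-1}/\partial \theta_j = - \mathrm{diag}(M)^{-1} \mathrm{diag}(\partial M/\partial \theta_j) \mathrm{diag}(M)^{-1}$ expresses $D_{n,\theta,j,k,\ell}$ as a finite sum (the number of terms and the combinatorial coefficients depending only on $p$) of products of matrices, each factor being one of $R_{n,\theta}$, $R_{n,\theta}^{-1}$, $\partial^m R_{n,\theta}/\partial \theta_{i_1} \cdots \partial \theta_{i_m}$ for $1 \le m \le 3$, or $\mathrm{diag}(\cdot)^m$ (for some integer $m$) of a product of such factors. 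This mirrors precisely the structure of $B_{n,\theta,j}$ in~\eqref{eq:matrice:for:gradient} and of the matrices $C_{n,\theta,j,k}$ of Lemma~\ref{lemma:Hessian:CV}.

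It then remains to bound $\rho_1(D_{n,\theta,j,k,\ell})$ uniformly in $n$ and $\theta \in \ITheta$. By submultiplicativity of the operator norm $\rho_1$, it suffices to bound $\rho_1$ of each factor above. We have $\rho_1(R_{n,\theta}) \le C_R$ by Lemma~\ref{lemma:largest:eigenvalue:R} and $\rho_1(R_{n,\theta}^{-1}) = 1/\lambda_n(R_{n,\theta}) \le 1/c_{R,1}$ by Condition~\ref{cond:smallest:eigen:value}. For the derivative matrices, the $(i,i')$ entry of $\partial^m R_{n,\theta}/\partial \theta_{i_1} \cdots \partial \theta_{i_m}$ is $\partial^m k_\theta(x_i - x_{i'})/\partial \theta_{i_1} \cdots \partial \theta_{i_m}$, bounded in absolute value by $C_{R,2}/(1 + \|x_i - x_{i'}\|^{d + c_{R,2}})$ by~\eqref{eq:bound:R:derivatives}; together with Condition~\ref{cond:minimal:distance}, Lemma~4 in~\cite{furrer2016asymptotic} then gives a uniform bound on $\rho_1$ of these matrices, exactly as in the proof of Proposition~D.7 in~\cite{bachoc14asymptotic} and as already used for~\eqref{eq:matrice:for:gradient:bound:singular:value}. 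Finally, since $\mathrm{diag}(\cdot)$ extracts the diagonal we have $\rho_1(\mathrm{diag}(M)) \le \rho_1(M)$, so the diagonal factors inherit the bounds just obtained; moreover $(R_{n,\theta}^{-1})_{ii} = e_i^\top R_{n,\theta}^{-1} e_i \ge \lambda_n(R_{n,\theta}^{-1}) = 1/\rho_1(R_{n,\theta}) \ge 1/C_R > 0$, so $\mathrm{diag}(R_{n,\theta}^{-1})$ is uniformly bounded below and hence $\rho_1(\mathrm{diag}(R_{n,\theta}^{-1})^{-1}) \le C_R$, and likewise for all of its integer powers. Combining these bounds yields a constant $C_D$ as required, proving~\eqref{eq:matrice:for:third:der:bound:singular:value}.

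The argument is essentially bookkeeping, so I expect no real obstacle. The only two points that are not purely formal are (i) that the diagonal entries of $R_{n,\theta}^{-1}$ are bounded away from $0$, so that the factors $\mathrm{diag}(\cdot)^{-m}$ produced by differentiation are uniformly controlled, and (ii) that the $\theta$-derivatives of $R_{n,\theta}$, up to order three, have uniformly bounded operator norm; both reduce to the already-cited estimates (Lemma~\ref{lemma:largest:eigenvalue:R}, Condition~\ref{cond:smallest:eigen:value}, and Lemma~4 of~\cite{furrer2016asymptotic} applied via~\eqref{eq:bound:R:derivatives} and Condition~\ref{cond:minimal:distance}). The mild annoyance is keeping track of all the terms generated by the triple product rule, but since one never needs their explicit form — only that each factor is of the listed type — this can be left implicit, just as in Lemma~\ref{lemma:Hessian:CV}.
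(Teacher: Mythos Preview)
Your proposal is correct and follows essentially the same approach as the paper, which simply states that the proof is the same as for Lemma~\ref{lemma:Hessian:CV}. You have just spelled out explicitly the bookkeeping that the paper leaves implicit (the structure of $D_{n,\theta,j,k,\ell}$ as a sum of products of the listed building blocks, and the uniform control of the $\mathrm{diag}(\cdot)^{-m}$ factors), relying on the same ingredients: Condition~\ref{cond:smallest:eigen:value}, Lemma~\ref{lemma:largest:eigenvalue:R}, \eqref{eq:bound:R:derivatives}, and the arguments from \cite{bachoc14asymptotic,furrer2016asymptotic}.
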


\begin{proof}[Proof of Lemma \ref{lemma:third:dev:CV}]
The proof is the same as for Lemma  \ref{lemma:Hessian:CV}.
\end{proof}

\begin{lemma} \label{lemma:bounds:derivatives:CV}
	Assume that Conditions \ref{cond:minimal:distance} to \ref{cond:cross:validation:smoothness:and:bounds} hold. 
	Then, there is a constant $0 < C_{\partial,y} < \infty$ such that for $n \in \IN$,  
	\begin{equation} \label{eq:sup:gradient:CV}
		\sup_{\theta \in \ITheta}
		|| \nabla M_n( \theta ) ||
		\leq 
		C_{\partial,y}  \frac{1}{n}  ||y^{(n)}||^2,
	\end{equation}
	\begin{equation} \label{eq:sup:hessian:CV}
		\sup_{\theta \in \ITheta}
		\rho_1( \nabla^2 M_n( \theta ) )
		\leq 
	   C_{\partial,y}  \frac{1}{n}  ||y^{(n)}||^2
	\end{equation}
and
\begin{equation} \label{eq:sup:third:der:CV}
	\sup_{\theta \in \ITheta}
	\max_{j,k,\ell=1,\ldots,p}
	\left|
	\frac{\partial^3 M_n(\theta)}{ \partial \theta_j \partial \theta_k \partial \theta_\ell }
	\right|
	\leq 
	C_{\partial,y}  \frac{1}{n}  ||y^{(n)}||^2.
\end{equation}
\end{lemma}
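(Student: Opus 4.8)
The plan is to obtain all three bounds as a direct consequence of the quadratic-form representations established in Lemmas~\ref{lemma:gradient:CV}, \ref{lemma:Hessian:CV} and \ref{lemma:third:dev:CV}, combined with the elementary inequality $|y^\top A y| \le \rho_1(A)\, ||y||^2$, which holds for any square matrix $A$ by Cauchy--Schwarz (since $|y^\top A y| \le ||y||\, ||Ay|| \le \rho_1(A)\, ||y||^2$, regardless of whether $A$ is symmetric). The key point is that the singular-value bounds \eqref{eq:matrice:for:gradient:bound:singular:value}, \eqref{eq:matrice:for:hessian:bound:singular:value} and \eqref{eq:matrice:for:third:der:bound:singular:value} are uniform in $\theta \in \ITheta$, while $||y^{(n)}||$ does not depend on $\theta$; hence the supremum over $\theta$ on the left-hand sides passes through harmlessly.

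Concretely, for \eqref{eq:sup:gradient:CV} I would first bound each coordinate: by \eqref{eq:gradient:CV} and \eqref{eq:matrice:for:gradient:bound:singular:value}, $|(\nabla M_n(\theta))_j| = \tfrac{1}{n} |y^{(n)\top} B_{n,\theta,j} y^{(n)}| \le \tfrac{C_B}{n} ||y^{(n)}||^2$ for every $j$ and $\theta$; then $||\nabla M_n(\theta)|| \le \sqrt{p}\, \max_{j} |(\nabla M_n(\theta))_j| \le \tfrac{\sqrt{p}\, C_B}{n} ||y^{(n)}||^2$, and I take the supremum over $\theta$. For \eqref{eq:sup:hessian:CV} I would pass from the operator norm to the entries via the Frobenius-norm inequality $\rho_1(\nabla^2 M_n(\theta)) \le \big(\sum_{j,k=1}^p (\nabla^2 M_n(\theta))_{j,k}^2\big)^{1/2} \le p\, \max_{j,k} |(\nabla^2 M_n(\theta))_{j,k}|$, and bound each entry by $\tfrac{C_C}{n} ||y^{(n)}||^2$ using \eqref{eq:hessian:CV} and \eqref{eq:matrice:for:hessian:bound:singular:value}. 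For \eqref{eq:sup:third:der:CV} the bound is even more immediate: $\left| \tfrac{\partial^3 M_n(\theta)}{\partial \theta_j \partial \theta_k \partial \theta_\ell} \right| \le \tfrac{C_D}{n} ||y^{(n)}||^2$ by \eqref{eq:third:derivatives:CV} and \eqref{eq:matrice:for:third:der:bound:singular:value}, uniformly in $\theta$, $j$, $k$, $\ell$. Finally I would set $C_{\partial,y} = \max(\sqrt{p}\, C_B,\, p\, C_C,\, C_D)$ to obtain all three displays simultaneously.

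There is essentially no obstacle: the statement is pure bookkeeping on top of the three preceding lemmas, and the only points requiring a line of justification are the standard passages from a vector's Euclidean norm to its sup-norm ($||v|| \le \sqrt{p}\, ||v||_\infty$) and from a matrix's operator norm to its entrywise sup ($\rho_1(A) \le ||A||_F \le p\, \max_{j,k}|A_{j,k}|$), which only affect the value of the dimensional constant $C_{\partial,y}$. If anything needed extra care it would be a reminder that the $\rho_1$-bounds in the earlier lemmas are genuinely uniform over $\ITheta$ (which is exactly how they are stated), so that interchanging the supremum over $\theta$ with the bound is legitimate.
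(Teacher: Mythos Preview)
Your proof is correct and follows exactly the approach of the paper, which simply states that the three displays follow from Lemmas~\ref{lemma:gradient:CV}, \ref{lemma:Hessian:CV} and \ref{lemma:third:dev:CV}. You have merely made explicit the elementary quadratic-form and norm inequalities that the paper leaves implicit.
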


\begin{proof}[Proof of Lemma \ref{lemma:bounds:derivatives:CV}]
Equations \eqref{eq:sup:gradient:CV}, \eqref{eq:sup:hessian:CV} and \eqref{eq:sup:third:der:CV}  follow from Lemmas \ref{lemma:gradient:CV}, \ref{lemma:Hessian:CV} and \ref{lemma:third:dev:CV}.
\end{proof}

\begin{lemma} \label{lemma:lambda:inf:Hessian:CV}
	Assume that Conditions \ref{cond:minimal:distance}, \ref{cond:smallest:eigen:value}, \ref{cond:cross:validation:smoothness:and:bounds} and \ref{cond:cross:validation:local:identifiability} hold. 
Then, Condition \ref{cond:local:identifiability} holds with $M_n$ as in \eqref{eq:Mntheta:CV} and $\theta_{0,n} = \theta_0$ as after \eqref{eq:Mntheta:CV}.
\end{lemma}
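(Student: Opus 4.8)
The plan is to verify Condition~\ref{cond:local:identifiability}, i.e.\ that $\lambda_p(\bar H_{n,0})$ stays bounded away from $0$ for $n$ large, by first obtaining an explicit, manifestly nonnegative formula for $\bar H_{n,0}=\mathbb E(\nabla^2 M_n(\theta_0))$ and then lower bounding its smallest eigenvalue through Condition~\ref{cond:cross:validation:local:identifiability}. First I would write $M_n(\theta)=\frac1n\sum_{i=1}^n \varepsilon_i(\theta)^2$ with leave-one-out errors $\varepsilon_i(\theta)=y^{(n)}_i-\mathbb E_\theta(y^{(n)}_i\mid y^{(n)}_{-i})=c_i(\theta)^\top y^{(n)}$, where $c_i(\theta)$ is the $i$-th row of $R_{n,\theta}^{-1}$ divided by $(R_{n,\theta}^{-1})_{ii}$; thus $(c_i(\theta))_i\equiv1$ and $R_{n,\theta}c_i(\theta)=\mathbf e_i/(R_{n,\theta}^{-1})_{ii}$, with $\mathbf e_i$ the $i$-th canonical vector. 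Since $\varepsilon_i(\theta)^2$ is a quadratic form in the Gaussian vector $y^{(n)}$ with deterministic coefficients that are $C^3$ in $\theta$ near $\theta_0\in\ITheta$ (Conditions~\ref{cond:smallest:eigen:value} and~\ref{cond:cross:validation:smoothness:and:bounds}), $\mathbb E$ and $\theta$-derivatives commute, and with $\Cov(y^{(n)})=R_{n,\theta_0}$ (well-specified case) one gets $(\bar H_{n,0})_{j,k}=\frac2n\sum_i\big(\mathbb E(\partial_{\theta_j}\varepsilon_i(\theta_0)\,\partial_{\theta_k}\varepsilon_i(\theta_0))+\mathbb E(\varepsilon_i(\theta_0)\,\partial^2_{\theta_j\theta_k}\varepsilon_i(\theta_0))\big)$. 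The second expectation equals $(R_{n,\theta_0}^{-1})_{ii}^{-1}\,\partial^2_{\theta_j\theta_k}(c_i(\theta_0))_i=0$, because $c_i(\theta_0)^\top R_{n,\theta_0}=\mathbf e_i^\top/(R_{n,\theta_0}^{-1})_{ii}$ while $(c_i(\theta))_i$ is constant in $\theta$; hence $\bar H_{n,0}=\frac2n\sum_{i=1}^n \nabla c_i(\theta_0)^\top R_{n,\theta_0}\nabla c_i(\theta_0)\succeq 0$.

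Next, for a unit vector $\alpha\in\IR^p$, Condition~\ref{cond:smallest:eigen:value} gives $\alpha^\top\bar H_{n,0}\alpha=\frac2n\sum_i\|R_{n,\theta_0}^{1/2}\nabla c_i(\theta_0)\alpha\|^2\ge \frac{2c_{R,1}}{n}\sum_i\|\nabla c_i(\theta_0)\alpha\|^2$. Differentiating $R_{n,\theta}c_i(\theta)=\mathbf e_i/(R_{n,\theta}^{-1})_{ii}$ along $\alpha$ expresses $\nabla c_i(\theta_0)\alpha$ through $\partial_\alpha R_{n,\theta_0}:=\sum_{\ell}\alpha_\ell\,\partial_{\theta_\ell}R_{n,\theta_0}$, the symmetric matrix with $(i,j)$ entry $\sum_\ell\alpha_\ell\,\partial_{\theta_\ell}k_{\theta_0}(x_i-x_j)$, together with $R_{n,\theta_0}^{-1}$ and a scalar correction proportional to $c_i(\theta_0)$. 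Using the uniform operator-norm control of $R_{n,\theta_0}^{\pm1}$ (Condition~\ref{cond:smallest:eigen:value}, Lemma~\ref{lemma:largest:eigenvalue:R}) and of $\partial_{\theta_\ell}R_{n,\theta_0}$ (from \eqref{eq:bound:R:derivatives} and Condition~\ref{cond:minimal:distance}, argued as in Lemma~\ref{lemma:largest:eigenvalue:R}), I would aim to deduce a bound of the form $\alpha^\top\bar H_{n,0}\alpha\ge c'\,\frac1n\sum_{i,j=1}^n\big(\sum_{\ell=1}^p\alpha_\ell\,\partial_{\theta_\ell}k_{\theta_0}(x_i-x_j)\big)^2=c'\,\alpha^\top\Gamma_n\alpha$, with $c'>0$ independent of $\alpha$ and $n$ and $\Gamma_n$ the $p\times p$ matrix implicit in Condition~\ref{cond:cross:validation:local:identifiability}; that is, $\bar H_{n,0}\succeq c'\Gamma_n$.

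It then remains to show $\lambda_p(\Gamma_n)\ge c$ for $n$ large. The entries of $\Gamma_n$ are bounded uniformly in $n$ by \eqref{eq:bound:R:derivatives}, Condition~\ref{cond:minimal:distance}, and the convergence of $\sum_j(1+\|x_i-x_j\|^{d+c_{R,2}})^{-2}$ over a $c_x$-separated point set, so $\sup_n\|\Gamma_n\|<\infty$. If $\lambda_p(\Gamma_n)$ were not eventually bounded below, there would be a subsequence and unit vectors $\alpha_{n_k}\to\alpha_\ast$ with $\alpha_{n_k}^\top\Gamma_{n_k}\alpha_{n_k}\to0$; by $\sup_k\|\Gamma_{n_k}\|<\infty$ this would force $\alpha_\ast^\top\Gamma_{n_k}\alpha_\ast\to0$, contradicting Condition~\ref{cond:cross:validation:local:identifiability} applied to $\alpha_\ast\ne0$. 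Combined with the previous step, $\lambda_p(\bar H_{n,0})\ge c'\lambda_p(\Gamma_n)\ge c'c>0$ for $n$ large, which is Condition~\ref{cond:local:identifiability}.

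I expect the second step to be the main difficulty: the formula for $\nabla c_i(\theta_0)\alpha$ also carries kriging-weight and diagonal-correction terms, and a crude triangle-inequality estimate discards too much to leave a positive multiple of $\frac1n\sum_{i,j}(\partial_\alpha k_{\theta_0}(x_i-x_j))^2$, so the cancellations have to be tracked precisely --- for instance via the exact identity $\|R_{n,\theta_0}^{1/2}\nabla c_i(\theta_0)\alpha\|^2=\|\Pi_i^\perp R_{n,\theta_0}^{-1/2}(\partial_\alpha R_{n,\theta_0})c_i(\theta_0)\|^2$ with $\Pi_i^\perp$ the orthogonal projection off $R_{n,\theta_0}^{-1/2}\mathbf e_i$ --- while exploiting that $R_{n,\theta_0}$ is well conditioned. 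This is essentially the computation underpinning the non-degeneracy of the limiting covariance matrix for cross-validation in \cite{bachoc14asymptotic,bachoc2020asymptotic}, so the lemma could alternatively be obtained by citing that analysis.
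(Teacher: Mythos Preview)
Your approach is essentially the same as the paper's: both reduce Condition~\ref{cond:local:identifiability} to (i) an inequality of the form $\alpha^\top \bar H_{n,0}\alpha \ge c'\,\tfrac{1}{n}\sum_{i,j}\bigl(\sum_\ell \alpha_\ell\,\partial_{\theta_\ell}k_{\theta_0}(x_i-x_j)\bigr)^2$ uniformly over unit $\alpha$, and (ii) a compactness argument showing that the right-hand side is bounded below uniformly in $\alpha$ for large $n$. Your step (ii) is exactly the paper's argument (it uses the Frobenius-Lipschitz bound \eqref{eq:bound:derivative:frobenius}, which is equivalent to your $\sup_n\|\Gamma_n\|<\infty$). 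For step (i), the paper does not carry out the leave-one-out computation you sketch; it simply invokes the proof of Proposition~3.7 in \cite{bachoc14asymptotic} for the inequality, which is precisely the alternative you propose in your final sentence. Your explicit derivation of $\bar H_{n,0}=\tfrac{2}{n}\sum_i \nabla c_i(\theta_0)^\top R_{n,\theta_0}\nabla c_i(\theta_0)$ and the projection identity you mention are exactly the ingredients behind that cited computation, so your self-diagnosed ``main difficulty'' is real but already resolved in the reference you point to.
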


\begin{proof}[Proof of Lemma \ref{lemma:lambda:inf:Hessian:CV}]
Let $\alpha, \beta \in \mathbb{R}^p$ with $\alpha_1^2 + \cdots + \alpha_p^2 = 1$ and $\beta_1^2 + \cdots +  \beta_p^2 = 1$. For a matrix $M$, let $||M||_F$ be its Frobenius norm. We have
\begin{align} \label{eq:bound:derivative:frobenius}
	\frac{1}{\sqrt{n}} \left| \left|
	\sum_{\ell=1}^p
	\alpha_{\ell}
	\frac{\partial R_{n,\theta_0}}{ \partial \theta_{\ell} }
	-
	\sum_{\ell=1}^p
	\beta_{\ell}
	\frac{\partial R_{n,\theta_0}}{ \partial \theta_{\ell} }
	\right|
	\right|_F
	\leq
	|| \alpha - \beta || 
	\frac{1}{\sqrt{n}}
	\sum_{\ell=1}^p
	\left| \left|
	\frac{\partial R_{n,\theta_0}}{ \partial \theta_{\ell} }
	\right|
	\right|_F
	\leq C_{1} 
	|| \alpha - \beta ||,
\end{align}
with a constant $0 < C_1 < \infty$, from \eqref{eq:bound:R:derivatives} and Lemma 4 in \cite{furrer2016asymptotic}.
Hence, Condition \ref{cond:cross:validation:local:identifiability} implies that 
\begin{equation} \label{eq:inf:sum:derivatives:R}
	\liminf_{n  \to \infty}
	\inf_{\substack{\alpha_1,\ldots,\alpha_p \in \mathbb{R} \\ \alpha_1^2 + \dots + \alpha_p^2 = 1}}
	\frac{1}{n}
	\sum_{i,j=1}^n
	\left(
	\sum_{\ell=1}^p
	\alpha_\ell
	\frac{ \partial  (R_{n,\theta_0})_{i,j}}{ \partial \theta_\ell}
	\right)^2
	>0.
\end{equation}
The inequality \eqref{eq:inf:sum:derivatives:R} follows from \eqref{eq:bound:derivative:frobenius} and Condition \ref{cond:cross:validation:local:identifiability}. Indeed, if \eqref{eq:inf:sum:derivatives:R} does not hold we can consider a convergent subsequence of unit norm vectors of $\mathbb{R}^p$, $(\alpha_n)_{n \in \mathbb{N}}$, for which the quantity in \eqref{eq:inf:sum:derivatives:R} goes to zero. Considering the limit of $\alpha_n$ and \eqref{eq:bound:derivative:frobenius} yields a contradiction to Condition \ref{cond:cross:validation:local:identifiability}.

We have from the proof of Proposition 3.7 in \cite{bachoc14asymptotic} that there exists a constant $0 < c_2 < \infty$ such that, for all $\alpha \in \mathbb{R}^p$ with $\alpha_1^2 + \dots + \alpha_p^2 = 1$,
\[
\sum_{k,\ell=1}^p
\alpha_k \alpha_{\ell}
( \mathbb{E}( \nabla^2 M_n(\theta_0)))_{k,\ell}
\geq 
c_2
\frac{1}{n}
\sum_{i,j=1}^n
\left(
\sum_{\ell=1}^p
\alpha_\ell
\frac{ \partial  (R_{n,\theta_0})_{i,j}}{ \partial \theta_\ell}
\right)^2.
\]
Hence from \eqref{eq:inf:sum:derivatives:R} we obtain
\[
\liminf_{n \to \infty}
\lambda_{p}
\left(
\mathbb{E}( \nabla^2 M_n(\theta_0))
\right)
>0.
\]
\end{proof}

\begin{lemma} \label{lemma:lambda:inf:cov:CV}
		Assume that Conditions  \ref{cond:minimal:distance}, \ref{cond:smallest:eigen:value}, \ref{cond:cross:validation:smoothness:and:bounds} and \ref{cond:cross:validation:local:identifiability} hold. 
Then, condition \ref{cond:smallest:eigenvalue:covariance:nabla} holds with $M_n$ as in \eqref{eq:Mntheta:CV} and $\theta_{0,n} = \theta_0$ as after \eqref{eq:Mntheta:CV}.
\end{lemma}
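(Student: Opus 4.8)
The plan is to turn the lower bound on $\lambda_p(\bar C_{n,0})$ into a lower bound on the Frobenius norm of a matrix built from the partial derivatives of $R_{n,\theta_0}$, and then to invoke Condition~\ref{cond:cross:validation:local:identifiability} through \eqref{eq:inf:sum:derivatives:R}. Fix a unit vector $\alpha\in\IR^p$, set $B_{n,\alpha}=\sum_{\ell=1}^p\alpha_\ell B_{n,\theta_0,\ell}$ with $B_{n,\theta_0,\ell}$ as in \eqref{eq:matrice:for:gradient}, and let $B^{s}_{n,\alpha}=\tfrac12(B_{n,\alpha}+B_{n,\alpha}^\top)$ be its symmetric part. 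Since $y^{(n)}$ is centered Gaussian with covariance matrix $R_{n,\theta_0}$ and $(\nabla M_n(\theta_0))_\ell=\tfrac1n y^{(n)\top}B_{n,\theta_0,\ell}\,y^{(n)}$ by Lemma~\ref{lemma:gradient:CV}, the classical formula for the variance of a Gaussian quadratic form gives
\[
\alpha^\top\bar C_{n,0}\,\alpha
= n\,\Var\!\left(\alpha^\top\nabla M_n(\theta_0)\right)
= \frac{2}{n}\,\Tr\!\left(\left(B^{s}_{n,\alpha}R_{n,\theta_0}\right)^2\right)
= \frac{2}{n}\left\|R_{n,\theta_0}^{1/2}B^{s}_{n,\alpha}R_{n,\theta_0}^{1/2}\right\|_F^2,
\]
where $\|\cdot\|_F$ is the Frobenius norm. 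By Condition~\ref{cond:smallest:eigen:value}, $R_{n,\theta_0}^{1/2}$ has smallest eigenvalue at least $\sqrt{c_{R,1}}$, so $\alpha^\top\bar C_{n,0}\,\alpha\ge \tfrac{2c_{R,1}^2}{n}\,\|B^{s}_{n,\alpha}\|_F^2$, and it therefore suffices to prove that $\liminf_{n\to\infty}\inf_{\|\alpha\|=1}\tfrac1n\|B^{s}_{n,\alpha}\|_F^2>0$.

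Next I would relate $B^{s}_{n,\alpha}$ back to $\partial_\alpha R_{n,\theta_0}:=\sum_{\ell=1}^p\alpha_\ell\,\partial R_{n,\theta_0}/\partial\theta_\ell$. Writing $D=\mathrm{diag}(R_{n,\theta_0}^{-1})$ and $G=R_{n,\theta_0}^{-1}\partial_\alpha R_{n,\theta_0}R_{n,\theta_0}^{-1}$, the definition \eqref{eq:matrice:for:gradient} yields, after conjugating by $R_{n,\theta_0}$ and passing to the symmetric part, an identity of the form
\[
R_{n,\theta_0}\,B^{s}_{n,\alpha}\,R_{n,\theta_0}
= 2D^{-2}\,\mathrm{diag}(G)\,D^{-1}
- \left( D^{-2}R_{n,\theta_0}^{-1}\partial_\alpha R_{n,\theta_0} + \partial_\alpha R_{n,\theta_0}R_{n,\theta_0}^{-1}D^{-2} \right).
\]
Because $R_{n,\theta}$ is a correlation matrix its diagonal is identically $1$ for every $\theta$, so $\partial_\alpha R_{n,\theta_0}$ has zero diagonal. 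Splitting this identity into its off-diagonal and diagonal parts, using the uniform bounds $\rho_1(R_{n,\theta_0})\le C_R$ (Lemma~\ref{lemma:largest:eigenvalue:R}) and $\lambda_n(R_{n,\theta_0})\ge c_{R,1}$ (Condition~\ref{cond:smallest:eigen:value}) — which in particular bound $D$ above and below — one obtains a constant $0<C_\ast<\infty$, independent of $n$ and $\alpha$, such that $\|\partial_\alpha R_{n,\theta_0}\|_F\le C_\ast\,\|B^{s}_{n,\alpha}\|_F$. This algebraic step is of the same nature as the covariance computations underlying Proposition~3.7 of \cite{bachoc14asymptotic}, and I would rely on \cite{bachoc14asymptotic,bachoc2020asymptotic} for the bookkeeping.

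Finally I would conclude as in the proof of Lemma~\ref{lemma:lambda:inf:Hessian:CV}. By \eqref{eq:inf:sum:derivatives:R},
\[
\liminf_{n\to\infty}\;\inf_{\|\alpha\|=1}\;\frac1n\,\|\partial_\alpha R_{n,\theta_0}\|_F^2
=\liminf_{n\to\infty}\;\inf_{\|\alpha\|=1}\;\frac1n\sum_{i,j=1}^n\left(\sum_{\ell=1}^p\alpha_\ell\frac{\partial (R_{n,\theta_0})_{i,j}}{\partial\theta_\ell}\right)^2>0,
\]
so the bound $\|\partial_\alpha R_{n,\theta_0}\|_F\le C_\ast\|B^{s}_{n,\alpha}\|_F$ forces $\liminf_{n\to\infty}\inf_{\|\alpha\|=1}\tfrac1n\|B^{s}_{n,\alpha}\|_F^2>0$, hence $\liminf_{n\to\infty}\lambda_p(\bar C_{n,0})>0$, which is Condition~\ref{cond:smallest:eigenvalue:covariance:nabla}. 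Alternatively one argues by contradiction: if $\inf_{\|\alpha\|=1}\alpha^\top\bar C_{n,0}\alpha$ failed to stay bounded away from zero along a subsequence, one would extract a convergent sequence of near-minimising unit vectors $\alpha_n\to\alpha$ and combine the two displays above with \eqref{eq:bound:derivative:frobenius} to contradict Condition~\ref{cond:cross:validation:local:identifiability}. The main obstacle is the middle step: the diagonal term $2D^{-2}\mathrm{diag}(G)D^{-1}$ is a priori of the same order as $\partial_\alpha R_{n,\theta_0}$ itself, so one cannot simply discard it; one must combine the diagonal and off-diagonal parts of the identity — using crucially that $\partial_\alpha R_{n,\theta_0}$ has zero diagonal — to show that the linear map $\partial_\alpha R_{n,\theta_0}\mapsto B^{s}_{n,\alpha}$ is, up to a Sylvester-type operator that is invertible with operator norm controlled only by $c_{R,1}$ and $C_R$, non-degenerate, and the passage to the symmetric part (only the symmetric part of $B_{n,\alpha}$ enters the variance) has to be tracked throughout.
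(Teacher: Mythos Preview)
Your approach is genuinely different from the paper's, and the critical step you flag as ``the main obstacle'' is indeed a real gap that the references you cite do not close.

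The paper does \emph{not} attempt to bound $\|\partial_\alpha R_{n,\theta_0}\|_F$ by $\|B^s_{n,\alpha}\|_F$. Instead it argues by contradiction through Le Cam contiguity. Assuming $v_n^\top\bar C_{n,0}v_n\to 0$ along a subsequence for unit vectors $v_n$, one gets $\sqrt{n}\,M_n'(0)\to 0$ in $\mathbb{P}_{n,0}$-probability, where $M_n'(0)=\nabla M_n(\theta_0)^\top v_n$. A direct log-likelihood expansion shows that $\mathbb{P}_{n,0}$ and $\mathbb{P}_{n,K/\sqrt{n}}$ (the Gaussian law with covariance $R_{n,\theta_0+Kv_n/\sqrt{n}}$) are mutually contiguous, so $\sqrt{n}\,M_n'(0)\to 0$ also under $\mathbb{P}_{n,K/\sqrt{n}}$. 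But under that measure, the already-proved Hessian lower bound (Lemma~\ref{lemma:lambda:inf:Hessian:CV}) forces $|\mathbb{E}_{n,K/\sqrt{n}}[\sqrt{n}\,M_n'(0)]|\ge c_2 K$ with $c_2$ independent of $K$, while the variance stays bounded; taking $K$ large yields a contradiction. The key idea is to transfer the work to Lemma~\ref{lemma:lambda:inf:Hessian:CV} via contiguity rather than redoing a parallel algebraic computation for $B^s_{n,\alpha}$.

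Concerning your middle step: the inequality $\|\partial_\alpha R_{n,\theta_0}\|_F\le C_\ast\|B^s_{n,\alpha}\|_F$ is not ``bookkeeping'' and is not contained in \cite{bachoc14asymptotic,bachoc2020asymptotic}. Proposition~3.7 of \cite{bachoc14asymptotic} lower-bounds $\alpha^\top\mathbb{E}[\nabla^2 M_n(\theta_0)]\alpha$, which is a trace $\tfrac{1}{n}\Tr(C_{n,\theta_0,\alpha}R_{n,\theta_0})$ linear in the second-derivative matrices; you need a lower bound on $\tfrac{1}{n}\Tr((B^s_{n,\alpha}R_{n,\theta_0})^2)$, which is quadratic in the first-derivative matrices and has a different algebraic structure. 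In your identity $RB^sR=2D^{-3}\mathrm{diag}(G)-(D^{-2}R^{-1}S+SR^{-1}D^{-2})$, the diagonal term satisfies $\|D^{-3}\mathrm{diag}(G)\|_F\le C\|S\|_F$, which is the \emph{wrong} direction for what you need: you must rule out cancellation between this diagonal piece and the Sylvester piece $D^{-2}R^{-1}S+SR^{-1}D^{-2}$. Since $D^{-2}R^{-1}$ is not symmetric, the Sylvester operator is not obviously coercive on symmetric zero-diagonal matrices with a constant depending only on $c_{R,1}$ and $C_R$, and you have not shown this. Your outline is honest about the difficulty, but as written the proof is incomplete at precisely the point that carries the argument.
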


\begin{proof}[Proof of Lemma \ref{lemma:lambda:inf:cov:CV}]
	Assume that for all constants $0 < c_1 < \infty$ and $N_1 \in \IN$, there is $n \geq N_{1}$ such that,
	\begin{equation} \label{eq:smallest:eigenvalue:cov:gradient}
		\lambda_p( \mathrm{Cov}( \sqrt{n} \nabla M_n(\theta_0) ) )
		\leq c_1.
	\end{equation}
	
	 Then, up to extracting a subsequence, there exists a sequence of unit vectors $(v_n)_{n \in \mathbb{N}}$ of $\mathbb{R}^p$ such that
	\begin{equation} \label{eq:vn:transp:cov:gradient:vn}
		v_n^\top \mathrm{Cov}( \sqrt{n} \nabla M_n(\theta_0) ) v_n 
		\to_{n \to \infty} 0.
	\end{equation}
	Let, for $t \geq 0$ such that $\theta_0 + t v_n \in \ITheta$,
	\[
	M_n(t) = M_n( \theta_0 + tv_n )
	\]
	and let $M_n'(t)$ be the derivative at $t$ of $t \mapsto M_n(t)$. We have
	\[
	M_n'(0) = \nabla M_n(\theta_0)^\top v_n.
	\]
	Hence \eqref{eq:vn:transp:cov:gradient:vn} implies
	\begin{equation} \label{eq:the:variance:goint:to:zero}
		\mathrm{Var}( \sqrt{n} M'_{n}(0) ) 
		\to_{n \to \infty} 0.
	\end{equation}
	Consider the logarithm of the likelihood
	\[
	L_n(t) = - \frac{1}{2} \log(\det( R_{n,t} ))
	- \frac{1}{2} y^{(n)\top} R_{n,t}^{-1} y^{(n)},
	\]
	where $R_{n,t} = R_{n , \theta_0 + t v_n}$. Let $K>0$ be fixed, to be selected later.
	Then, with $L_n'(t)$ and $L_n''(t)$ the first and second derivative of $t \mapsto L_n(t)$ at $t$, for $n$ such that $B( \theta_0 , K/\sqrt{n} ) \subset \ITheta$,
	\begin{align} \label{eq:log:likelihood:difference}
		\left|
		L_n(0) - L_n( K / \sqrt{n} )
		\right|
		& \leq 
		\frac{K}{\sqrt{n}}
		\sup_{ |t| \leq K / \sqrt{n}}
		|   L_n'(t) |
		\notag
		\\
		& \leq 
		\frac{K}{\sqrt{n}}
		|  L_n'(0) |
		+
		\left(\frac{K}{\sqrt{n}}\right)^2
		\sup_{ |t| \leq K / \sqrt{n}}
		| L_n''(t) |.
	\end{align}
	Let $\mathbb{P}_{n,t}$, $\mathbb{E}_{n,t}$ and $\mathrm{Var}_{n,t}$ be the Gaussian distribution of $y^{(n)}$, and the corresponding expectation and variance, assuming that $y^{(n)}$ has mean vector zero and covariance matrix $R_{n,t}$.
	From the arguments in \cite{bachoc14asymptotic}, $|L_n''(t)|$ is bounded by $n C_1 + C_1 ||y^{(n)}||^2$ and $L_n'(0)$ has expectation under $\mathbb{P}_{n,0}$ equal to zero and variance under $\mathbb{P}_{n,0}$ bounded by $C_1 n$, where $C_1$ can be chosen independently of $t \in [0,K]$.
	Hence the quantity in \eqref{eq:log:likelihood:difference} is bounded in $\mathbb{P}_{n,0}$ probability. We also have, for $n$ such that $B(\theta_0 , K / \sqrt{n}) \subset \ITheta$,
	\begin{align} \label{eq:log:likelihood:difference:two}
		\left|
		L_n(0) - L_n( K / \sqrt{n} )
		\right|
		& \leq 
		\frac{K}{\sqrt{n}}
		\sup_{ |t| \leq K / \sqrt{n}}
		|   L_n'(t) |
		\notag
		\\
		& \leq 
		\frac{K}{\sqrt{n}}
		|  L_n'(K / \sqrt{n}) |
		+
		2\left(\frac{K}{\sqrt{n}}\right)^2
		\sup_{ |t| \leq K / \sqrt{n}}
		| L_n''(t) |
	\end{align}
	and, similarly as before, the quantity in \eqref{eq:log:likelihood:difference:two} is bounded in $\mathbb{P}_{n,K/\sqrt{n}}$ probability. Hence, from Le Cam's first lemma (see for instance\cite[Lemma 6.4]{van2000asymptotic}), the measures $\mathbb{P}_{n,0}$ and $\mathbb{P}_{n,K/\sqrt{n}}$ are mutually contiguous.
	
	Now \eqref{eq:the:variance:goint:to:zero} and $\mathbb{E}_{n,0}( M_n'(0) ) = 0$ imply that
	\begin{equation} \label{eq:Enprime:to:zero}
		\sqrt{n} M'_{n}(0) 
		\to^{\mathbb{P}_{n,0}}_{n \to \infty} 0.
	\end{equation}
Hence, we have, again from Le Cam's first lemma and from \eqref{eq:Enprime:to:zero}, that
	\begin{equation} \label{eq:Eprime:large:to:be:contradicted}
		\sqrt{n} M'_{n}(0) 
		\to^{\mathbb{P}_{n,K/\sqrt{n}}}_{n \to \infty} 0.
	\end{equation}
We have, for $t \in [0,K/\sqrt{n}]$ and $n$ such that $B( \theta_0 , K/\sqrt{n} ) \in \ITheta$,
	\begin{align*}
		\left|
		\mathbb{E}_{n,0}( M_n''(0) )
		-
		\mathbb{E}_{n,t}( M_n''(t) )
		\right|
		\leq &
		\left|
		\mathbb{E}_{n,0}( M_n''(0) )
		-
		\mathbb{E}_{n,0}( M_n''(t) )
		\right|
		+
		\left|
		\mathbb{E}_{n,0}( M_n''(t) )
		-
		\mathbb{E}_{n,t}( M_n''(t) )
		\right|
		\\
		= &
		\left|
		\mathbb{E}_{n,0}( M_n''(0) )
		-
		\mathbb{E}_{n,0}( M_n''(t) )
		\right|
		+
		\frac{1}{n}
		\mathrm{Tr}
		\left(
		(R_{n,0} - R_{n,t})
		Q_{n,t}
		\right),
	\end{align*}
	with 
	\[
	Q_{n,t} = \sum_{j , k = 1}^p 
	(v_n)_j (v_n)_k
	C_{n,\theta_0+t v_n,j,k}
	\]
	from \eqref{eq:hessian:CV}. Hence from \eqref{eq:sup:third:der:CV}, \eqref{eq:matrice:for:hessian:bound:singular:value}, Cauchy-Schwarz inequality and Lemma \ref{lemma:largest:eigenvalue:R}, we have
	\[
	\sup_{t \in [0 , K/\sqrt{n}]}
	\left|
	\mathbb{E}_{n,0}( M_n''(0) )
	-
	\mathbb{E}_{n,t}( M_n''(t) )
	\right|
	\to_{n \to \infty} 0.
	\]
	Hence, from Lemma \ref{lemma:lambda:inf:Hessian:CV}, there exist $N_{2} \in \mathbb{N}$ and $0 < c_{2} < \infty$ such that, for $n \geq N_{2}$,
	\begin{equation} \label{eq:inf:Esp:Mn:second}
	\inf_{t \in [0 , K/\sqrt{n}]}
	\mathbb{E}_{n,t}( M_n''(t) )
	\geq 
	c_2.
	\end{equation}
	Note that $c_2$ can be chosen independently on $K$ while $N_{2}$ depends on $K$ (for instance, with $c_2 = c_{\theta_0,H}/2$ as in Condition \ref{cond:local:identifiability}).
	Similarly as for showing \eqref{eq:inf:Esp:Mn:second}, we can change the values of $c_2$ and $N_{2}$ such that, for $n \geq N_{2}$,
	\begin{equation} \label{eq:Esp:Ensecond:large}
		\inf_{t_1,t_2 \in [0 , K/\sqrt{n}]}
		\mathbb{E}_{n,t_1}( M_n''(t_2) )
		\geq 
	c_2.
	\end{equation}
	Again, $c_2$ can be chosen independently on $K$ while $N_{2}$ depends on $K$.
	Then, from the arguments of the proof of Lemma \ref{lemma:nabla:Mn:larger:than}, together with \eqref{eq:Esp:Ensecond:large}, we obtain, for $n$ larger than a constant $N_{K,1} \in \IN$,
	\[
	| \mathbb{E}_{n,K/\sqrt{n}}  \sqrt{n} M'_{n}(0)  |
	\geq 
	\sqrt{n}
	c_2
	\frac{K}{\sqrt{n}}.
	\]
	Furthermore, from \eqref{eq:gradient:CV}, \eqref{eq:matrice:for:gradient:bound:singular:value} and \eqref{eq:bound:R} we have, for $n$ larger than a constant $N_{K,2} \in \IN$, $\mathrm{Var}_{n,K/\sqrt{n}}( \sqrt{n} \linebreak[1] M'_{n}(0) ) \leq C_3 $ with a constant $0 < C_3 < \infty$ that does not depend on $K$. Hence, by taking $K$ large enough, the $\liminf$ of the $\mathbb{P}_{n,K/\sqrt{n}}$-probability that $|\sqrt{n} M'_{n}(0)|$ is larger than one can be made arbitrarily large. This is a contradiction to \eqref{eq:Eprime:large:to:be:contradicted}. Hence we have a contradiction to \eqref{eq:smallest:eigenvalue:cov:gradient}, which concludes the proof.
\end{proof}

\begin{proposition} \label{proposition:wasserstein:quadratic:form}
	Let $X = (Y^\top A_1Y,...,Y^\top A_pY)$ be a random vector, with $A_1,...,A_p$ symmetric $n \times n$ matrices, and $Y$ a Gaussian vector with covariance matrix $K$. Let $C$ be the $p \times p$ matrix with coefficients
	$$C_{i,j} = \Tr(KA_iKA_j)$$
	and $Z_C$ be a p-dimensional centered Gaussian vector with covariance matrix $C$. 
	Assume moreover that $X$ is centered, which is the same as assuming that
	$$\Tr(A_iK) = 0 \hspace{2mm}, i=1,\ldots,p.$$
	Then
	$$\mathcal{W}_1(X, Z_C) \leq
	\frac{ \sqrt{\lambda_1(C)} }{ \lambda_p(C) }
\sqrt{2\sum_{i,j = 1,...,p} \Tr((KA_iKA_j)^2)}.$$
\end{proposition}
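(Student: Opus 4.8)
The plan is to whiten the Gaussian, recognise $X$ as a vector of second--chaos quadratic forms, and run Stein's method, with the error governed by the fluctuations of the associated Malliavin covariance. First I would write $Y=K^{1/2}G$ with $G$ standard Gaussian on $\IR^n$ and set $B_i:=K^{1/2}A_iK^{1/2}$, a symmetric matrix with $\Tr(B_i)=\Tr(KA_i)=0$ --- which is exactly the centering hypothesis --- so that $X_i=G^\top B_iG$ is a centered element of the second Wiener chaos of $G$. By cyclicity of the trace, $C_{i,j}=\Tr(B_iB_j)$, $\Tr((KA_iKA_j)^2)=\Tr((B_iB_j)^2)$, and $\Cov(X_i,X_j)=2\Tr(B_iB_j)$, so one may work entirely with the $B_i$'s and the centered Gaussian target $Z_\Sigma$ of covariance $\Sigma:=\Cov(X)$.

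Then, fixing a $1$-Lipschitz test function $h$, I would let $f=f_h$ solve the Stein equation $\langle\Sigma,\Hess f(x)\rangle_{HS}-\langle x,\nabla f(x)\rangle=h(x)-\Esp h(Z_\Sigma)$. Conjugating by $\Sigma^{-1/2}$ reduces this to the standard identity--covariance Stein equation for the $\sqrt{\lambda_1(\Sigma)}$-Lipschitz function $y\mapsto h(\Sigma^{1/2}y)$, and the classical estimate on the Hessian of the standard solution then gives $\sup_x\|\Hess f(x)\|_{\mathrm{op}}\le\sqrt{\lambda_1(\Sigma)}\,\lambda_p(\Sigma)^{-1}$, which is $\lesssim\sqrt{\lambda_1(C)}/\lambda_p(C)$; this is the source of the geometric prefactor. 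Next, applying Gaussian integration by parts in $G$ to $\Esp[X_i\,\partial_if(X)]$, the diagonal term carries the factor $\Tr(B_i)=0$ and vanishes, while the remainder, symmetrised in $(i,j)$ using that $\Hess f$ is symmetric, equals $\sum_{i,j}\Esp[\partial_i\partial_jf(X)\,\tau_{i,j}(G)]$ with $\tau_{i,j}(G)=G^\top(B_iB_j+B_jB_i)G$ and $\Esp[\tau_{i,j}(G)]=2\Tr(B_iB_j)=\Sigma_{i,j}$. Hence $\Esp h(X)-\Esp h(Z_\Sigma)=\Esp\langle\Sigma-\tau(G),\Hess f(X)\rangle_{HS}$, and Cauchy--Schwarz yields $|\Esp h(X)-\Esp h(Z_\Sigma)|\le\big(\sup_x\|\Hess f(x)\|_{\mathrm{op}}\big)\big(\sum_{i,j}\Var(\tau_{i,j})\big)^{1/2}$; taking the supremum over $h\in\mathcal L_1$ bounds $\mathcal W_1(X,Z_\Sigma)$.

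It remains to control the variances. Since $B_iB_j+B_jB_i$ is symmetric, $\Var(\tau_{i,j})=2\|B_iB_j+B_jB_i\|_F^2$ is, by cyclicity of the trace, a fixed linear combination of $\Tr((B_iB_j)^2)=\Tr((KA_iKA_j)^2)$ and $\Tr(B_i^2B_j^2)$, which one bounds by (a multiple of) $\Tr((KA_iKA_j)^2)$ via the Cauchy--Schwarz inequality for the Frobenius inner product on symmetric matrices; summing over $i,j$ and inserting the prefactor from the previous paragraph produces a bound of the announced form. Equivalently, the preceding two steps are an instance of the quantitative multivariate fourth--moment theorem for vectors in a fixed Wiener chaos, so one may instead invoke that theorem and identify the relevant contraction norms $\|f_i\otimes_1 f_j\|$, which in matrix form are exactly these traces.

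The main obstacle is the second--moment bookkeeping of the last step: turning $\Esp\|\Sigma-\tau(G)\|_{HS}^2=\sum_{i,j}\Var(\tau_{i,j})$ into the clean quantity $2\sum_{i,j}\Tr((KA_iKA_j)^2)$ with the stated numerical constant and without a spurious dimensional factor --- this is precisely where the sharp form of the fourth--moment/contraction estimate is needed. The whitening, the $\Sigma^{-1/2}$-reduction for the Stein equation, and the Gaussian integration by parts are all routine.
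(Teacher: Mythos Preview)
The paper's own proof is a single sentence: it quotes \cite[Proposition~4.3]{nourdin2010clt} directly. Your approach is to rederive that result from scratch via the Stein--Malliavin route, and the outline is sound: the whitening $Y=K^{1/2}G$, the identification of each $X_i$ as a centered second-chaos element, the $\Sigma^{-1/2}$ reduction of the Stein equation to extract the prefactor $\sqrt{\lambda_1}/\lambda_p$, and the integration-by-parts identity $\Esp[h(X)]-\Esp[h(Z_\Sigma)]=\Esp\langle\Sigma-\tau(G),\Hess f(X)\rangle_{HS}$ are all standard and correct. (Note incidentally that $\Sigma=\Cov(X)=2C$, so your target $Z_\Sigma$ differs from the stated $Z_C$ by a scaling; this is harmless for the downstream application but should be tracked.)

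The genuine gap is exactly the step you flag as ``the main obstacle.'' You claim $\Tr(B_i^2B_j^2)$ can be bounded by a multiple of $\Tr((B_iB_j)^2)$ via Cauchy--Schwarz for the Frobenius inner product, but Cauchy--Schwarz runs the other way: writing $\Tr((B_iB_j)^2)=\langle B_iB_j,B_jB_i\rangle_{HS}$ gives $|\Tr((B_iB_j)^2)|\le\|B_iB_j\|_F\,\|B_jB_i\|_F=\Tr(B_i^2B_j^2)$, and indeed $\Tr((B_iB_j)^2)\le\Tr(B_i^2B_j^2)$ always holds for symmetric $B_i,B_j$ (expand $\Tr\big((B_iB_j-B_jB_i)^*(B_iB_j-B_jB_i)\big)\ge0$). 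So your computation naturally delivers a bound in terms of $\sum_{i,j}\Tr(B_i^2B_j^2)=\sum_{i,j}\|B_iB_j\|_F^2$, not the smaller $\sum_{i,j}\Tr((KA_iKA_j)^2)$ appearing in the statement. For the use made of the proposition in the paper this distinction is immaterial---both traces are controlled by the same operator-norm bounds on $B_{n,\theta_0,j}$ and $R_{n,\theta_0}$---but to recover the inequality exactly as stated you must appeal to the cited result, which is precisely the fallback you propose in your final sentence and precisely what the paper does.
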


\begin{proof}[Proof of Proposition \ref{proposition:wasserstein:quadratic:form}]
	The proposition is a direct consequence of \cite[Proposition 4.3]{nourdin2010clt}. 
\end{proof}

\begin{proof}[Proof of Theorem \ref{theorem:cross:validation:bound}]

Let us check that Conditions \ref{cond:Theta:covering} to \ref{cond:smallest:eigenvalue:covariance:nabla} hold in order to apply Theorem \ref{theorem:general:bound}. Condition \ref{cond:Theta:covering} is already assumed to hold. Condition \ref{cond:smoothness} holds because of Lemmas \ref{lemma:gradient:CV} to \ref{lemma:bounds:derivatives:CV}. Let us check the first part of Condition \ref{cond:concentration}. 
	From \eqref{eq:Mntheta:CV}, Condition \ref{cond:smallest:eigen:value},
	\eqref{eq:bound:R} and
	Lemma \ref{lemma:largest:eigenvalue:R} and as in \cite{bachoc14asymptotic}, we have
	\[
	M_n(\theta)
	=
	\frac{1}{n}
	y^{(n)\top}
	A_{n,\theta}
	y^{(n)}
	\]
	with $A_{n,\theta}$ symmetric and $\sup_{\theta \in \Theta} \rho_1( A_{n,\theta} ) \leq C_1$ for a constant $0 < C_1 < \infty$. By diagonalization, for each fixed $\theta \in \Theta$, there exist independent standard Gaussian variables $z_{n,\theta,1},\ldots,z_{n,\theta,n}$ and scalars $\lambda_{n,\theta,1},\ldots,\lambda_{n,\theta,n}$, such that, with a constant $0 < C_2 < \infty$,
	\[
	\sup_{n \in \mathbb{N}} \sup_{\theta \in \Theta} \max_{i=1}^n |\lambda_{n,\theta,i}| \leq C_{2}
\text{ and } 
	M_n(\theta)
	=
	\frac{1}{n}
	\sum_{i=1}^n
	\lambda_{n,\theta,i}
	z_{n,\theta,i}^2.
	\]
	Hence, we can apply Bernstein's inequality (for instance Theorem 2.8.1 in \cite{vershynin}) and we obtain, for $0 < \epsilon \leq 1$,
	\[
	\sup_{\theta \in \Theta} 
	\mathbb{P}
	( |M_n(\theta) - \mathbb{E}(M_n(\theta))  | \geq \epsilon )
	\leq C_3
	\exp( - n c_3 \epsilon^2 ),
	\]
	with constants $0 < c_3 < \infty$ and $0 < C_3 < \infty$ that do not depend on $\epsilon$. Hence the first part of Condition \ref{cond:concentration} indeed holds.
		The second part is shown in the same way, using Lemma \ref{lemma:gradient:CV}. 
		
	Let us check the first part of Condition \ref{cond:large:dev}. From \eqref{eq:sup:gradient:CV}, we obtain
	\[
	\mathbb{P} \left( \sup_{\theta \in \ITheta} 
	\left|  \left|
	\nabla M_n(\theta)
	\right| \right|
	\geq K
	\right)
	\leq 
	\mathbb{P} \left( 
C_{\delta,y}
	\max_{i=1}^n
	\left(
	y^{(n)}_i
	\right)^2
	\geq K
	\right)
	\leq 
	n \max_{i=1}^n 
	\mathbb{P}
	\left(
	C_{\delta,y} (y^{(n)}_i)^2 \geq K
	\right)
	\leq 
	C_{4} n \exp(- c_4 K  ),
	\]
	with constants $0 < c_4 < \infty$ and $0 < C_4 < \infty$, from, for instance, (A.2) in \cite{chatterjee2014superconcentration}. Hence the first part of Condition \ref{cond:large:dev} holds. The second part is shown similarly.

	Condition \ref{cond:moment}, \eqref{eq:cond:moments:un:deux} follows from \eqref{eq:sup:gradient:CV} and \eqref{eq:sup:hessian:CV}.
	Condition \ref{cond:moment}, \eqref{eq:cond:moments:trois} holds using first \eqref{eq:sup:third:der:CV}, then observing that from for instance (A.6) and (A.7) in \cite{paolella2018linear}, we have
	\[
	 \mathbb{E} \left( \left(  \frac{1}{n}  ||y^{(n)}||^2 \right)^2 \right) 
	 =
	  \frac{1}{n^2} \Tr \left(  R_{n,\theta_0}    \right)^2
	 +
	 \frac{2}{n^2} \Tr \left(  R_{n,\theta_0}^2   \right),
	\]
	and finally using Lemma \ref{lemma:largest:eigenvalue:R}.
 
The first part of Condition \ref{cond:var} is shown from Lemma \ref{lemma:gradient:CV} and, e.g., (A.7) in \cite{paolella2018linear}. The second part is shown similarly from Lemma \ref{lemma:Hessian:CV}. 
	In Condition \ref{cond:global:identifiability}, the offline equation follows from Condition \ref{cond:cross:validation:global:identifiability} and the proof of Proposition 3.4 in \cite{bachoc14asymptotic}. Furthermore, $\Esp( \nabla M_n (\theta_{0}) ) = 0$ is shown for instance in  \cite{bachoc14asymptotic} and can also be checked directly. Thus  Condition \ref{cond:global:identifiability} holds. 
 Condition \ref{cond:local:identifiability} holds from Lemma \ref{lemma:lambda:inf:Hessian:CV}. 	Condition \ref{cond:smallest:eigenvalue:covariance:nabla} holds from Lemma \ref{lemma:lambda:inf:cov:CV}.
	
	Hence Theorem \ref{theorem:general:bound} can be applied. From this theorem, in order to conclude the proof, it is sufficient to show that, with a constant $0 < C_5 < \infty$,
	\begin{equation} \label{eq:to:show:final:CV}
		\mathcal{W}_1
	\left( 
	\bar{C}_{n,0}^{-1/2}
	\sqrt{n} \nabla M_n (\theta_{0}) , Z 
	\right)
	\leq 
	\frac{C_5}{ \sqrt{n} }.
	\end{equation}
	
	The quantity $	\sqrt{n} \nabla M_n (\theta_{0})$ satisfies the condition of Proposition \ref{proposition:wasserstein:quadratic:form}, with $Y = y^{(n)}$ and, for $j=1,\ldots,p$,
	\[
	A_j = \frac{1}{\sqrt{n}} B_{n,\theta_0,j},
	\]
	from Lemma \ref{lemma:gradient:CV}. From Condition \ref{cond:global:identifiability}, then indeed $\Esp(  y^{(n)\top} A_j y^{(n)} )  = 0$. Then Proposition \ref{proposition:wasserstein:quadratic:form} yields
	\begin{equation} \label{eq:W1:nabla:Zn}
	\mathcal{W}_1( 	\sqrt{n} \nabla M_n (\theta_{0}) , Z_n )
	\leq 
	C_6,
	\end{equation}
	where $Z_n$ is a Gaussian vector with mean zero and covariance matrix $\bar{C}_{n,0}$, for a constant $0 < C_6 < \infty$, from \eqref{eq:matrice:for:gradient:bound:singular:value}, Conditions \ref{cond:var} and \ref{cond:smallest:eigenvalue:covariance:nabla} and Lemma \ref{lemma:largest:eigenvalue:R}.
	Then from Lemma \ref{lemma:Lipschitz:Wasserstein} and Condition \ref{cond:smallest:eigenvalue:covariance:nabla}, 
	\begin{align*}
		\mathcal{W}_1
\left( 
\bar{C}_{n,0}^{-1/2}
\sqrt{n} \nabla M_n (\theta_{0}) , Z 
\right)
			\leq 	\frac{C_6}{\sqrt{c_{\theta_0,\nabla} }}.
	\end{align*}
Hence, \eqref{eq:to:show:final:CV} is shown, which concludes the proof.
\end{proof}

\section*{Acknowledgments}
This work was supported by the Project MESA (ANR-18-CE40-006) of the French National Research Agency (ANR).


\begin{thebibliography}{10}
	
	\bibitem{anastasiou2017bounds}
	A.~Anastasiou.
	\newblock Bounds for the normal approximation of the maximum likelihood
	estimator from m-dependent random variables.
	\newblock {\em Statistics \& Probability Letters}, 129:171--181, 2017.
	
	\bibitem{anastasiou2018assessing}
	A.~Anastasiou.
	\newblock Assessing the multivariate normal approximation of the maximum
	likelihood estimator from high-dimensional, heterogeneous data.
	\newblock {\em Electronic Journal of Statistics}, 12(2):3794--3828, 2018.
	
	\bibitem{anastasiou2019normal}
	A.~Anastasiou, K.~Balasubramanian, and M.~A. Erdogdu.
	\newblock Normal approximation for stochastic gradient descent via
	non-asymptotic rates of martingale {CLT}.
	\newblock In {\em Conference on Learning Theory}, pages 115--137. PMLR, 2019.
	
	\bibitem{anastasiou2020multivariate}
	A.~Anastasiou and R.~E. Gaunt.
	\newblock Multivariate normal approximation of the maximum likelihood estimator
	via the delta method.
	\newblock {\em Brazilian Journal of Probability and Statistics},
	34(1):136--149, 2020.
	
	\bibitem{anastasiou2020wasserstein}
	A.~Anastasiou and R.~E. Gaunt.
	\newblock Wasserstein distance error bounds for the multivariate normal
	approximation of the maximum likelihood estimator.
	\newblock {\em arXiv preprint arXiv:2005.05208}, 2020.
	
	\bibitem{anastasiouLey2017bounds}
	A.~Anastasiou and C.~Ley.
	\newblock Bounds for the asymptotic normality of the maximum likelihood
	estimator using the delta method.
	\newblock {\em ALEA, Latin American Journal of Probability and Mathematical
		Statistics}, 14:153--171, 2017.
	
	\bibitem{anastasiouReinert2017bounds}
	A.~Anastasiou and G.~Reinert.
	\newblock Bounds for the normal approximation of the maximum likelihood
	estimator.
	\newblock {\em Bernoulli}, 23(1):191--218, 2017.
	
	\bibitem{anastasiou2020bounds}
	A.~Anastasiou and G.~Reinert.
	\newblock Bounds for the asymptotic distribution of the likelihood ratio.
	\newblock {\em The Annals of Applied Probability}, 30(2):608--643, 2020.
	
	\bibitem{Bachoc2013cross}
	F.~Bachoc.
	\newblock Cross validation and maximum likelihood estimations of
	hyper-parameters of {Gaussian} processes with model mispecification.
	\newblock {\em Computational Statistics and Data Analysis}, 66:55--69, 2013.
	
	\bibitem{bachoc14asymptotic}
	F.~Bachoc.
	\newblock Asymptotic analysis of the role of spatial sampling for covariance
	parameter estimation of {Gaussian} processes.
	\newblock {\em Journal of Multivariate Analysis}, 125:1--35, 2014.
	
	\bibitem{Bachoc2021asymptotic}
	F.~Bachoc.
	\newblock {\em Asymptotic Analysis of Maximum Likelihood Estimation of
		Covariance Parameters for Gaussian Processes: An Introduction with Proofs},
	pages 283--303.
	\newblock Springer International Publishing, Cham, 2021.
	
	\bibitem{bachoc2020asymptotic}
	F.~Bachoc, J.~B{\'e}tancourt, R.~Furrer, and T.~Klein.
	\newblock Asymptotic properties of the maximum likelihood and cross validation
	estimators for transformed {Gaussian} processes.
	\newblock {\em Electronic Journal of Statistics}, 14(1):1962--2008, 2020.
	
	\bibitem{bachoc2016smallest}
	F.~Bachoc and R.~Furrer.
	\newblock On the smallest eigenvalues of covariance matrices of multivariate
	spatial processes.
	\newblock {\em Stat}, 5(1):102--107, 2016.
	
	\bibitem{bachoc2020spatial}
	F.~Bachoc, M.~G. Genton, K.~Nordhausen, A.~Ruiz-Gazen, and J.~Virta.
	\newblock Spatial blind source separation.
	\newblock {\em Biometrika}, 107(3):627--646, 2020.
	
	\bibitem{bachoc2020uniformly}
	F.~Bachoc, D.~Preinerstorfer, and L.~Steinberger.
	\newblock Uniformly valid confidence intervals post-model-selection.
	\newblock {\em The Annals of Statistics}, 48(1):440--463, 2020.
	
	\bibitem{bentkus1997berry}
	V.~Bentkus, M.~Bloznelis, and F.~G{\"o}tze.
	\newblock A {Berry}--{Esseen} bound for {M}-estimators.
	\newblock {\em Scandinavian journal of statistics}, 24(4):485--502, 1997.
	
	\bibitem{berk2013valid}
	R.~Berk, L.~Brown, A.~Buja, K.~Zhang, and L.~Zhao.
	\newblock Valid post-selection inference.
	\newblock {\em The Annals of Statistics}, pages 802--837, 2013.
	
	\bibitem{bonis2020stein}
	T.~Bonis.
	\newblock Stein’s method for normal approximation in {Wasserstein} distances
	with application to the multivariate central limit theorem.
	\newblock {\em Probability Theory and Related Fields}, 178(3):827--860, 2020.
	
	\bibitem{boucheron2013concentration}
	S.~Boucheron, G.~Lugosi, and P.~Massart.
	\newblock {\em Concentration inequalities: A nonasymptotic theory of
		independence}.
	\newblock Oxford university press, 2013.
	
	\bibitem{casella2021statistical}
	G.~Casella and R.~L. Berger.
	\newblock {\em Statistical inference}.
	\newblock Pacific Grove, CA: Duxbury Press, 2021.
	
	\bibitem{chatterjee2009stein}
	S.~Chatterjee.
	\newblock A new method of normal approximation.
	\newblock {\em Annals of Probability}, 36(4):1584--1610, 2008.
	
	\bibitem{chatterjee2009poincare}
	S.~Chatterjee.
	\newblock Fluctuations of eigenvalues and second order {P}oincar\'{e}
	inequalities.
	\newblock {\em Probability Theory and Related Fields}, 143(1-2):1--40, 2009.
	
	\bibitem{chatterjee2014superconcentration}
	S.~Chatterjee.
	\newblock {\em Superconcentration and related topics}, volume~15.
	\newblock Springer, 2014.
	
	\bibitem{chiles2009geostatistics}
	J.-P. Chiles and P.~Delfiner.
	\newblock {\em Geostatistics: Modeling Spatial Uncertainty}.
	\newblock John Wiley \& Sons, 2009.
	
	\bibitem{cressie2015statistics}
	N.~Cressie.
	\newblock {\em Statistics for spatial data}.
	\newblock John Wiley \& Sons, 2015.
	
	\bibitem{decreusefond2019dirichlet}
	L.~Decreusefond and H.~Halconruy.
	\newblock Malliavin and {D}irichlet structures for independent random
	variables.
	\newblock {\em Stochastic Processes and their Applications}, 129(8):2611--2653,
	2019.
	
	\bibitem{dubrule83cross}
	O.~Dubrule.
	\newblock Cross validation of {Kriging} in a unique neighborhood.
	\newblock {\em Mathematical Geology}, 15:687--699, 1983.
	
	\bibitem{duerinckx2021glauber}
	M.~Duerinckx.
	\newblock On the size of chaos via {G}lauber calculus in the classical
	mean-field dynamics.
	\newblock {\em Communications in Mathematical Physics}, 382(1):613--653, 2021.
	
	\bibitem{Fahrmeir90}
	L.~Fahrmeir.
	\newblock Maximum likelihood estimation in misspecified generalized linear
	models.
	\newblock {\em Statistics}, 21(4):487--502, 1990.
	
	\bibitem{furrer2016asymptotic}
	R.~Furrer, F.~Bachoc, and J.~Du.
	\newblock Asymptotic properties of multivariate tapering for estimation and
	prediction.
	\newblock {\em Journal of Multivariate Analysis}, 149:177--191, 2016.
	
	\bibitem{genton2015cross}
	M.~G. Genton and W.~Kleiber.
	\newblock Cross-covariance functions for multivariate geostatistics.
	\newblock {\em Statistical Science}, 30(2):147--163, 2015.
	
	\bibitem{gozlan}
	N.~Gozlan.
	\newblock A characterization of dimension free concentration in terms of
	transportation inequalities.
	\newblock {\em The Annals of Probability}, 37(6):2480--2498, 2009.
	
	\bibitem{hallin2009local}
	M.~Hallin, Z.~Lu, and K.~Yu.
	\newblock Local linear spatial quantile regression.
	\newblock {\em Bernoulli}, 15(3):659--686, 2009.
	
	\bibitem{huber1967under}
	P.~J. Huber.
	\newblock The behavior of maximum likelihood estimates under nonstandard
	conditions.
	\newblock In {\em Proceedings of the Fifth Berkeley Symposium on Mathematical
		Statistics and Probability: Weather modification}, volume~5, page 221. Univ
	of California Press, 1967.
	
	\bibitem{lv14model}
	J.~Lv and J.~S. Liu.
	\newblock Model selection principles in misspecified models.
	\newblock {\em Journal of the Royal Statistical Society Series B}, 76:141--167,
	2014.
	
	\bibitem{mardia84maximum}
	K.~Mardia and R.~Marshall.
	\newblock Maximum likelihood estimation of models for residual covariance in
	spatial regression.
	\newblock {\em Biometrika}, 71:135--146, 1984.
	
	\bibitem{nourdin2020breuer}
	I.~Nourdin and D.~Nualart.
	\newblock The functional {B}reuer-{M}ajor theorem.
	\newblock {\em Probability Theory and Related Fields}, 176(1-2):203--218, 2020.
	
	\bibitem{nourdin2009stein}
	I.~Nourdin and G.~Peccati.
	\newblock Stein's method on {W}iener chaos.
	\newblock {\em Probability Theory and Related Fields}, 145(1-2):75--118, 2009.
	
	\bibitem{nourdin2012livre}
	I.~Nourdin and G.~Peccati.
	\newblock {\em Normal approximations with {M}alliavin calculus: From Stein's
		method to universality}, volume 192 of {\em Cambridge Tracts in Mathematics}.
	\newblock Cambridge University Press, Cambridge, 2012.
	
	\bibitem{nourdin2011breuer}
	I.~Nourdin, G.~Peccati, and M.~Podolskij.
	\newblock Quantitative {B}reuer-{M}ajor theorems.
	\newblock {\em Stochastic Processes and their Applications}, 121(4):793--812,
	2011.
	
	\bibitem{nourdin2009poincare}
	I.~Nourdin, G.~Peccati, and G.~Reinert.
	\newblock Second order {P}oincar\'{e} inequalities and {CLT}s on {W}iener
	space.
	\newblock {\em Journal of Functional Analysis}, 257(2):593--609, 2009.
	
	\bibitem{nourdin2010invariance}
	I.~Nourdin, G.~Peccati, and G.~Reinert.
	\newblock Invariance principles for homogeneous sums: universality of
	{G}aussian {W}iener chaos.
	\newblock {\em Annals of Probability}, 38(5):1947--1985, 2010.
	
	\bibitem{nourdin2010clt}
	I.~Nourdin, G.~Peccati, and A.~R\'{e}veillac.
	\newblock Multivariate normal approximation using {S}tein's method and
	{M}alliavin calculus.
	\newblock {\em Annales de l'Institut Henri Poincar\'{e} Probabilit{\'e}s et
		Statistiques}, 46(1):45--58, 2010.
	
	\bibitem{paolella2018linear}
	M.~S. Paolella.
	\newblock {\em Linear Models and Time-Series Analysis: Regression, ANOVA, ARMA
		and GARCH}.
	\newblock John Wiley \& Sons, 2018.
	
	\bibitem{pinelis2017optimal}
	I.~Pinelis.
	\newblock Optimal-order uniform and nonuniform bounds on the rate of
	convergence to normality for maximum likelihood estimators.
	\newblock {\em Electronic Journal of Statistics}, 11(1):1160--1179, 2017.
	
	\bibitem{pinelis2016optimal}
	I.~Pinelis and R.~Molzon.
	\newblock Optimal-order bounds on the rate of convergence to normality in the
	multivariate delta method.
	\newblock {\em Electronic Journal of Statistics}, 10(1):1001--1063, 2016.
	
	\bibitem{potscher2013dynamic}
	B.~M. P{\"o}tscher and I.~R. Prucha.
	\newblock {\em Dynamic nonlinear econometric models: Asymptotic theory}.
	\newblock Springer Science \& Business Media, 2013.
	
	\bibitem{pronzato2013design}
	L.~Pronzato and A.~P{\'a}zman.
	\newblock Design of experiments in nonlinear models.
	\newblock {\em Lecture notes in statistics}, 212:1, 2013.
	
	\bibitem{ross11}
	N.~Ross.
	\newblock Fundamentals of {S}tein's method.
	\newblock {\em Probability Surveys}, 8:210--293, 2011.
	
	\bibitem{shao2021berry}
	Q.-M. Shao and Z.-S. Zhang.
	\newblock {Berry--Esseen} bounds for multivariate nonlinear statistics with
	applications to {M}-estimators and stochastic gradient descent algorithms.
	\newblock {\em arXiv preprint arXiv:2102.04923}, 2021.
	
	\bibitem{van2000asymptotic}
	A.~W. Van~der Vaart.
	\newblock {\em Asymptotic statistics}, volume~3.
	\newblock Cambridge University Press, 2000.
	
	\bibitem{vershynin}
	R.~Vershynin.
	\newblock {\em High-dimensional probability: An introduction with applications
		in data science}, volume~47 of {\em Cambridge Series in Statistical and
		Probabilistic Mathematics}.
	\newblock Cambridge University Press, Cambridge, 2018.
	
	\bibitem{wackernagel2013multivariate}
	H.~Wackernagel.
	\newblock {\em Multivariate geostatistics: an introduction with applications}.
	\newblock Springer Science \& Business Media, 2013.
	
	\bibitem{white1982maximum}
	H.~White.
	\newblock Maximum likelihood estimation of misspecified models.
	\newblock {\em Econometrica: Journal of the econometric society}, 50(1):1--25,
	1982.
	
	\bibitem{zhang04inconsistent}
	H.~Zhang.
	\newblock Inconsistent estimation and asymptotically equivalent interpolations
	in model-based geostatistics.
	\newblock {\em Journal of the American Statistical Association}, 99:250--261,
	2004.
	
	\bibitem{Zhang2010kriging}
	H.~Zhang and Y.~Wang.
	\newblock Kriging and cross validation for massive spatial data.
	\newblock {\em Environmetrics}, 21:290--304, 2010.
	
\end{thebibliography}
\end{document}